\documentclass[oneside,english,12pt]{amsart}
\usepackage{amsthm}
\usepackage{hyperref}
\usepackage{txfonts}
\usepackage{amsfonts}
\usepackage{lmodern}
\usepackage{setspace}

\usepackage[T1]{fontenc}
\usepackage[latin9]{inputenc}
\usepackage{geometry}
\geometry{verbose,tmargin=1in,bmargin=1in,lmargin=1in,rmargin=1in}
\usepackage{amsthm}
\usepackage{amssymb}
\usepackage[all]{xy}
\usepackage{pinlabel}
\usepackage{dirtytalk}
\usepackage{amsmath}

\usepackage{mathtools}
\usepackage{amssymb,amsmath,amsthm,tikz}
\usepackage{pinlabel}
\usetikzlibrary{matrix,arrows,decorations.pathmorphing}

\title{Maps on the Morse Boundary}
\author{Qing Liu}

\makeatletter
\numberwithin{equation}{section}
\numberwithin{figure}{section}


\theoremstyle{plain}
\newtheorem{thm}{Theorem}[section]
\newtheorem{lem}[thm]{Lemma}
\newtheorem{cor}[thm]{Corollary}
\newtheorem{prop}[thm]{Proposition}

\theoremstyle{definition}
\newtheorem{defn}[thm]{Definition}
\newtheorem{eg}[thm]{Example}

\theoremstyle{remark}
\newtheorem{rmk}[thm]{Remark}

\newcommand{\bbR}{{\mathbb{R}}}
\newcommand{\bbZ}{{\mathbb{Z}}}

\usepackage{babel}

\tikzset{node distance=1.5cm, auto}

\begin{document}
\maketitle

\begin{abstract}
For a proper geodesic metric space $X$, the Morse boundary $\partial_*X$ focuses on the hyperbolic-like directions in the space $X$. It is a quasi-isometry invariant. That is, a quasi-isometry between two hyperbolic spaces induces a homeomorphism on their boundaries. In this paper, we investigate additional structures on the Morse boundary $\partial_*X$ which determine $X$ up to a quasi-isometry. We prove that, for $X$ and $Y$ proper, cocompact spaces, a homeomorphism $f$ between their Morse boundaries is induced by a quasi-isometry if and only if $f$ and $f^{-1}$ are bihölder, or quasi-symmetric, or strongly quasi-conformal.

\end{abstract}
\section{Introduction}
The visual boundaries of hyperbolic metric spaces have been extensively studied. They play a critical role in the study of the geometry, topology and dynamics of hyperbolic spaces.
Gromov \cite{gromov1987hyperbolic} showed that they are quasi-isometry invariant. That is, a quasi-isometry between two hyperbolic metric spaces induces a homeomorphism between their boundaries. Hence the visual boundary is well-defined for a hyperbolic group. The boundary of a hyperbolic space is metrizable. If we fix visual metrics on the hyperbolic boundaries, the homeomorphism induced by a quasi-isometry between two hyperbolic spaces satisfies a variety of metric properties. It is bihölder, quasi-conformal, quasi-mobius and power quasi-symmetric. Quasi-mobius is a condition that bounds the distortion of cross-ratios and quasi-conformal bounds the distortion of metric spheres and annuli. These notions have been studied by Otal, Pansu, Tukia and Vaisala \cite{otal1992geometrie, pansu1989dimension, tukia1980two, tukia1982quasiconformal, tukia1984bilipschitz, vaisala2006lectures}. Quasi-symmetric maps have appeared \cite{tukia1980quasisymmetric, vaisala1981quasisymmetric}.

There is a natural question, to what extend the converse of this is true. That is, what extent is a hyperbolic space $X$ determined by its boundary $\partial X$. A result of F.Paulin \cite{paulin1996groupe} and M.Bourdon \cite{ledrappier1994structure} answers this question:
Let $G_1$ and $G_2$ be two word-hyperbolic groups. Suppose $h:\partial G_1\to \partial G_2$ is a homeomorphism such that $h$ and $h^{-1}$ are quasi-Mobius or quasi-conformal. Then $h$ extends to a quasi-isometry $f:G_1\to G_2$. In our paper, we will use Paulin's idea to approach the problem of extending a map between the Morse boundaries to a quasi-isometry between the spaces. Here the definition of quasi-conformal in the sense of Paulin is different from the one used by Tukia and others.

In the paper \cite{bonk2000embeddings}, M.Bonk and O.Schramm state a different result using power quasi-symmetric maps on the boundaries. They show that, if $f:(\partial X, d_{x_0, \epsilon_X})\to (\partial Y, d_{y_0, \epsilon_Y})$ is a power quasi-symmetry on the boundaries of two hyperbolic spaces $X, Y$, then $f$ extends to a quasi-isometry $h:X\to Y$. Their idea is different from Paulin.

Boundaries can be defined for more general metric spaces. We can define a visual boundary of CAT(0) spaces similarly. But this boundary is not a quasi-isometry invariant. An example of Croke and Kleiner \cite{croke2000spaces} shows that there is a group acting geometrically on two CAT(0) spaces with different topological boundaries. In \cite{CS2014}, Charney and Sultan constructed a new boundary for CAT(0) spaces which is defined by restricting to rays satisfying a contracting property. This boundary is originally called the contracting boundary and it is a quasi-isometry invariant. In CAT(0) spaces, the contracting property is equivalent to the Morse property. Later, Cordes \cite{Cordes} generalized this construction to arbitrary proper geodesic metric spaces using rays satisfying the Morse properties. These boundaries are called Morse boundaries. They are also quasi-isometry invariant. Thus the Morse boundary is well-defined for any finitely generated group. The aim of this boundary is to study hyperbolic behavior in a non-hyperbolic space. Several papers \cite{CS2014, C16, Cordes, cordes2016boundary, cordes2017stability, Murray, charney2019quasi, liu2019dynamics} studied Morse boundaries that many properties and applications of hyperbolic boundaries generalize to Morse boundaries of more general groups. 

Denote the Morse boundary of $X$ by $\partial_*X$.
In \cite{charney2019quasi}, Charney, Cordes and Murray investigate the question of when a homeomorphism of Morse boundaries is induced by a quasi-isometry of the interior spaces. They show the following result, which is an analogue of Paulin's theorem.
\begin{thm}[\cite{charney2019quasi}]
Let $h:\partial_*X\to \partial_*Y$ be a homeomorphism between the Morse boundaries of two proper, cocompact geodesic metric spaces. Assume that $\partial_*X$ contains at least three points. Then $f$ is induced by a quasi-isometry $h: X \to Y$ if and only if  $f$ and $f^{-1}$ are 2-stable and quasi-mobius.
 \end{thm}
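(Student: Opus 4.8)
The plan is to prove both directions, with the converse---that the metric conditions force the existence of a quasi-isometry---being the substantial one; for that direction I would follow the strategy Paulin developed in the hyperbolic setting, adapted to the stratification of $\partial_*X$ by Morse gauges. Throughout I write $f$ for the boundary homeomorphism carrying the two hypotheses and $F$ for the quasi-isometry of interior spaces to be produced or used.

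For the forward implication, suppose $f=\partial_*F$ is induced by a quasi-isometry $F\colon X\to Y$. Since a quasi-isometry sends $\mathfrak{M}$-Morse geodesics to $\mathfrak{M}'$-Morse geodesics for a gauge $\mathfrak{M}'$ depending only on $\mathfrak{M}$ and the quasi-isometry constants, $f$ carries every pair of points of $\partial_*^{\mathfrak{M}}X$ into a common stratum $\partial_*^{\mathfrak{M}'}Y$, which is precisely 2-stability. To see that $f$ is quasi-M\"obius I would recall that the cross-ratio on the Morse boundary is built from Gromov products of Morse rays, and that a quasi-isometry distorts such Gromov products by an additive constant controlled by $\mathfrak{M}$ and the constants of $F$; within any stratum this additive distortion of Gromov products translates into the bounded distortion of cross-ratios demanded by the quasi-M\"obius condition. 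Applying the same reasoning to a quasi-inverse of $F$ gives the statement for $f^{-1}$.

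For the converse, the essential geometric input is that 2-stability of $f$ and $f^{-1}$ lets us reduce, stratum by stratum, to a uniformly hyperbolic situation. Fixing a gauge $\mathfrak{M}$, the union of bi-infinite geodesics joining pairs of points of $\partial_*^{\mathfrak{M}}X$ is a stable, hence $\delta$-hyperbolic, subspace $Z_{\mathfrak{M}}\subseteq X$ whose Gromov boundary is identified with $\partial_*^{\mathfrak{M}}X$; by 2-stability, $f$ maps $\partial_*^{\mathfrak{M}}X$ into some $\partial_*^{\mathfrak{M}'}Y=\partial Z_{\mathfrak{M}'}$. I would then construct the quasi-isometry by the coarse-centroid method: because $\partial_*X$ has at least three points, for $x\in Z_{\mathfrak{M}}$ I choose a triple $a,b,c\in\partial_*^{\mathfrak{M}}X$ whose ideal triangle has coarse centroid within bounded distance of $x$, and set $F(x)$ to be a coarse centroid of the ideal triangle spanned by $f(a),f(b),f(c)$ in $Z_{\mathfrak{M}'}$. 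The quasi-M\"obius hypothesis, phrased through Gromov products, guarantees both that $F(x)$ is independent of the chosen triple up to bounded error and that $F$ distorts distances by a bounded multiplicative-additive amount, so that $F$ restricts to a quasi-isometric embedding of each $Z_{\mathfrak{M}}$ with constants depending only on $\mathfrak{M}$ and on the stability and quasi-M\"obius data.

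The final and most delicate step is to assemble these stratumwise maps into a single quasi-isometry $F\colon X\to Y$, and cocompactness is exactly what makes this possible: it guarantees that $X$ is coarsely exhausted by the subspaces $Z_{\mathfrak{M}}$ and, more importantly, that the constants obtained stratum by stratum can be taken uniform, so that the partial maps agree up to bounded error on overlaps and glue to a globally defined coarse map. I expect the main obstacle to lie precisely here---in controlling the direct-limit topology of the Morse boundary---since, unlike the hyperbolic case, the gauges $\mathfrak{M}'$ produced by $f$ and by $f^{-1}$ vary with $\mathfrak{M}$, and one must use 2-stability of \emph{both} $f$ and $f^{-1}$ together with cocompactness to prevent the quasi-isometry constants from degenerating as the strata grow. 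Coarse surjectivity of the assembled map then follows from surjectivity of $f$ and the same exhaustion argument applied to $f^{-1}$.
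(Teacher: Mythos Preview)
This theorem is quoted from \cite{charney2019quasi} and is not proved in the present paper; however, the paper does prove the closely analogous Theorem~\ref{thm 2} (with bih\"older, quasisymmetric, or strongly quasi-conformal in place of quasi-M\"obius), and the strategy there is essentially the one Charney--Cordes--Murray use. Your forward implication is fine and matches the standard argument.

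For the converse, your coarse-centroid construction is the right idea, but the assembly step is misconceived. You propose to build a quasi-isometric embedding on each hyperbolic stratum $Z_{\mathfrak{M}}$ separately and then glue, hoping cocompactness forces the stratumwise constants to be uniform. That uniformity is neither assumed nor provable from the hypotheses: the 2-stability and quasi-M\"obius data are given gauge by gauge, with the controlling function allowed to depend on $\mathfrak{M}$, and there is no mechanism to stop the constants from blowing up as $\mathfrak{M}$ grows. The paper (and \cite{charney2019quasi}) uses cocompactness in a different and sharper way: one fixes a \emph{single} gauge $N_0$ with $\partial_*X^{(3,N_0)}\neq\emptyset$, and observes that the cocompact isometry group moves any $N_0$-triple so that its coarse center comes within a fixed radius $R$ of every point of $X$. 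Hence the map $\Phi_f$ can be defined on \emph{all} of $X$ using only $N_0$-triples, and the entire argument runs at a fixed pair of gauges $(N_0,N_1)$ with fixed constants---no gluing is needed, and the degeneration problem you flag simply does not arise. The substance of the proof then becomes Proposition~\ref{key prop}: if two $N$-triples have nearby coarse centers in $X$, then their $f$-images have coarse centers within a distance depending only on $N$, $f$, and the separation; this is where the quasi-M\"obius (respectively bih\"older, etc.) hypothesis is actually spent.
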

 
 Sarah Mousley and Jacob Russel have proved an analogous result for Heirerarchically Hyperbolic groups \cite{mousley2019hierarchically}.

The purpose of this paper is to investigate additional structures on Morse boundaries $\partial_*X$ which determine $X$ up to a quasi-isometry. We show the following theorem.

\begin{thm}[\bf Main theorem]\label{main}
Let $X$ and $Y$ be proper, cocompact geodesic metric spaces and assume that $\partial_*X$ contains at least three points. Let $f: \partial_*X \to \partial_*Y$ be a homeomorphism. Then the following are equivalent.

\begin{enumerate}
\item $f$ is induced by a quasi-isometry $h: X \to Y$.
\item $f$ and $f^{-1}$ are bihölder.
\item $f$ and $f^{-1}$ are quasi-symmetric.
\item $f$ and $f^{-1}$ are strongly quasi-conformal. 
\end{enumerate} 
\end{thm}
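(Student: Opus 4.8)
The plan is to use the theorem of Charney, Cordes and Murray \cite{charney2019quasi} as the hub: it identifies condition (1) with the pair of conditions \emph{$2$-stable and quasi-M\"obius} for $f$ and $f^{-1}$. The strategy is therefore to show that each of the metric conditions (2), (3), (4) is equivalent to (1) by routing through this hub. Throughout I will work with the stratified structure $\partial_* X = \varinjlim_N \partial_*^N X$, where each stratum $\partial_*^N X$ carries a visual quasi-metric of the form $e^{-\epsilon(\cdot\,|\,\cdot)}$ built from a Gromov product, and every metric hypothesis is to be read stratum by stratum, with constants that are uniform in $N$. This uniformity is precisely what produces \emph{$2$-stability}, so I would first record that each of (2), (3), (4) forces $f$ to send the $N$-stratum into an $N'$-stratum with $N'$ depending only on $N$ (and symmetrically for $f^{-1}$), which is the $2$-stable condition required by the hub theorem.

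Next I would prove (1) $\Rightarrow$ (2), (3), (4) together. A quasi-isometry $h\colon X\to Y$ is already known to induce a $2$-stable homeomorphism $f$, and on each stratum the boundary map is the restriction of a map between hyperbolic-type spaces; the classical fact that a quasi-isometry of hyperbolic spaces induces a power quasi-symmetry of visual boundaries then applies stratum by stratum, giving that $f$ is quasi-symmetric. From quasi-symmetry I obtain quasi-conformality for free, and I obtain the bi-H\"older bound by using that each stratum with its visual metric is bounded and uniformly perfect, so that power quasi-symmetry upgrades to a two-sided H\"older estimate in the style of Tukia--V\"ais\"al\"a. The word ``strongly'' in strongly quasi-conformal is exactly the uniform-over-strata version of this property, and it comes out of the uniform constants supplied by $h$ (and by a quasi-inverse for $f^{-1}$).

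The substance of the converse is to feed each metric condition into the quasi-M\"obius property. For (3) this is a standard metric-space implication: a quasi-symmetric homeomorphism of bounded spaces is quasi-M\"obius. For (2) I would exploit that the cross-ratio is, up to additive error, a difference of Gromov products, hence a ratio of visual distances; a bi-H\"older distortion of visual distances then distorts cross-ratios in the controlled multiplicative-plus-additive fashion demanded by the quasi-M\"obius condition. For (4) I would pass through the equivalence of strong quasi-conformality and quasi-symmetry on each stratum, where the visual metric is doubling and the stratum is linearly connected and uniformly perfect (here the hypothesis that $\partial_* X$ has at least three points is used to rule out degenerate strata), so that the Heinonen--Koskela--Tyson circle of ideas promotes a uniformly quasi-conformal map to a quasi-symmetric one, returning us to case (3). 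Combining any of these with the $2$-stability established above, the hub theorem delivers (1).

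The main obstacle is the implication (4) $\Rightarrow$ quasi-symmetric. The promotion of quasi-conformal to quasi-symmetric fails for general metric spaces and requires genuine regularity of the boundary strata: some form of Ahlfors regularity or linear local connectivity, together with uniform perfectness and doubling, all holding with constants uniform across the directed system of strata. Verifying these geometric regularity properties for the visual metrics on Morse strata, and checking that the constants transfer compatibly under the direct-limit maps so that ``strongly quasi-conformal'' really captures the needed uniformity, is the technical heart of the argument; this is also where cocompactness of $X$ and $Y$ is essential, since it is what supplies the uniform control on Morse gauges that keeps the stratum-wise constants from degenerating.
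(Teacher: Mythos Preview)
Your proposal diverges from the paper in a substantive way, and contains a genuine gap.

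\textbf{What the paper actually does.} The paper does \emph{not} route the converse implications through the Charney--Cordes--Murray quasi-M\"obius theorem. Instead it follows Paulin's method directly: given the boundary map $f$, one builds an extension $\Phi_f\colon X\to Y$ by sending a point $x$ to a coarse center of $f(a,b,c)$ for some $N_0$-Morse ideal triangle $(a,b,c)$ whose coarse center is near $x$. The entire argument hinges on a single estimate (Proposition~\ref{key prop}): if two ideal triangles in $\partial_* X^{(3,N)}$ have coarse centers within distance $\theta$, then the coarse centers of their $f$-images are within a distance $\eta_N(\theta)$. This estimate is proved separately, and elementarily, for each of the three hypotheses (2), (3), (4), using only the definitions and the visual-metric relation to Gromov products. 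No cross-ratio manipulation and no analytic promotion theorems are invoked.

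\textbf{The gap.} Your route from (4) to (1) relies on promoting strongly quasi-conformal to quasi-symmetric via Heinonen--Koskela--Tyson type results, which need the strata $(\partial X_{x_0}^{(N)}, d_{x_0,\epsilon_N})$ to be doubling, Ahlfors regular, or linearly locally connected. These hypotheses are simply unavailable here: Morse strata can be totally disconnected (so linear connectivity fails outright), and there is no reason for doubling or Ahlfors regularity to hold, even with cocompactness of $X$. Cocompactness controls Morse gauges along orbits, not the local analytic structure of the visual metric. The paper sidesteps this entirely: in Case three of the Claim inside the proof of Proposition~\ref{key prop}, the strongly quasi-conformal hypothesis is used via a direct annulus-and-triangle-inequality argument to bound $d_{y_1}(f(a_2),f(b_2))$, etc., from below, with no regularity of the boundary needed.

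A secondary point: in the paper, basetriangle stability (and hence, via Proposition~\ref{2=bstri}, $2$-stability) is \emph{built into} the definitions of bih\"older, quasi-symmetric, and strongly quasi-conformal on the Morse boundary; it is not deduced from the metric inequality. So your opening step of ``deriving $2$-stability from the metric condition'' misreads the setup.
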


Note that in general, the Morse boundary is neither metrizable nor compact, so it seems that there is no obvious way to talk about bihölder maps, quasi-symmetries and quasi-conformal maps on the Morse boundary. In the current paper, we provide an approach to this question, and define these notions on the Morse boundary. The starting step is that, in the paper \cite{cordes2017stability}, Cordes and Hume show that, for a proper geodesic metric space, the Morse boundary is the direct limit of a collection of Gromov boundaries. We know that each Gromov boundary is metrizable. This provides a possibility of generalizing notions defined in a metric space to the Morse boundary.

Combing the Main Theorem \ref{main} and the work of Charney, Cordes and Murray \cite{charney2019quasi}, we have the following theorem. We know that these four conditions in the main theorem  are equivalent to the 5th one:
$f$ and $f^{-1}$ are $2$-stable and quasi-möbius.

To define these notions, bihölder, quasi-symmetric, strongly quasi-conformal maps, we require a property called basetriangle stable. We will talk about these in section 3. The $2$-stable property is equivalent to the basetriangle stable property in our setting by Proposition \ref{2=bstri}. This means we have $2$-stable maps for free in the case of bihölder, quasi-symmetric, strongly quasi-conformal homeomorphisms under the assumption of Theorem \ref{main}.

Also we get the following corollary in the case of hyperbolic spaces. Please check the definitions in section 3. These notions between the Morse boundaries are different with that in the usual case of metric spaces.
\begin{cor}
Let $X$ and $Y$ be proper, cocompact geodesic, hyperbolic spaces. 
Suppose that $\partial X$ contains at least three points.
Let $f: \partial X \to \partial Y$ be a homeomorphism. Then the following are equivalent.

\begin{enumerate}
\item $f$ is induced by a quasi-isometry $h: X \to Y$.
\item $f$ and $f^{-1}$ are bihölder.
\item $f$ and $f^{-1}$ are quasisymmetric.
\item $f$ and $f^{-1}$ are strongly quasi-conformal. 
\item $f$ and $f^{-1}$ are quasi-mobius. 
\end{enumerate} 
\end{cor}

A paper of Cashen and Mackay \cite{cashen2019metrizable} introduces a metrizable topology on the Morse boundary. It is still interesting to know whether a full analogue of Paulin's theorem holds for this modified Morse boundary. Qing, Rafi and Tiozzo \cite{qing2019sublinearly} introduce a new boundary for CAT(0) spaces, called sublinearly Morse boundary, which is strictly larger than the Morse boundary. Qing and Zalloum \cite{qing2019rank} show that a homeomorphism $f: \partial_{K}G\to\partial_{K}G'$ is induced by a quasi-isometry if and only if $f$ is Morse quasi-mobius and stable, where $G$ and $G'$ are CAT(0) groups and $\partial_{K}G$ is the sublinearly Morse boundary of $G$. It is still interesting to know if there is a way to define quasi-conformal to get an analogue of our main theorem. 

The paper is organized as follows. In section 2, we review known properties of the Morse boundary and metrics on the Gromov boundary.  We define bihölder maps, quasi-symmetries and strongly quasi-conformal maps on the Morse boundary and prove one direction of the main theorem in section 3. In section 4, under any of these conditions, we extends a map between the boundary to the interior and show that it is a quasi-isometry.

{\bf Acknowledgment.} We thank Ruth Charney for helpful comments.

\section{Preliminaries}
Let $X$ be a metric space. We use $[x, y]$ to represent a geodesic between $x, y\in X$. We say a metric space $X$ is {\it proper} if any closed ball in $X$ is compact. If $A$ is a subset in $X$, the {\it r-neighborhood} of $A$ in $X$ is denoted by $\mathcal{N}_{r}(A)$. The {\it Hausdorff distance} $d_{\mathcal{H}}(A_1, A_2)$ between two subsets $A_1$ and $A_2$ is defined by $\inf\{ r\ |\ A_1\subset \mathcal{N}_{r}(A_2), A_2\subset \mathcal{N}_{r}(A_1)\}$.

\begin{defn}
Let $f: (X, d_X)\to (Y, d_Y)$ be a map between two metric spaces $X$ and $Y$, $K\ge 1, C\ge 0$ . If for all $x_1, x_2\in X$,
$$K^{-1}d_{X}(x_1, x_2)-C\le d_Y(f(x_1), f(x_2))\le Kd_X(x_1, x_2)+C,$$
then $f$ is called a {\it $(K, C)$-quasi-isometric embedding.}
If, in addition, $d_Y(f(x), Y)\le C$ for all $x\in X$, then $f$ is called a {\it $(K, C)$-quasi-isometry}.
If $X$ is an interval of $\bbR$, then a $(K, C)$-quasi-isometric embedding $f$ is called a {\it $(K, C)$-quasi-geodesic}. We use the image of $f$ to describe the quasi-geodesic.
\end{defn}

\begin{defn}
Let $N$ be a function from $[1, \infty)\times [0, \infty)$ to $[0, \infty)$. A geodesic $\gamma$ in $X$ is {\it $N$-Morse} if for any $(K, C)$-quasi-geodesic $\alpha$ with endpoints on $\gamma$, we have $\alpha\subset \mathcal{N}_{N(K, C)}(\gamma)$. The function $N$ is called a Morse gauge.
\end{defn}

We know that in a hyperbolic space $X$, there exists a Morse gauge $N$ such that all geodesics in $X$ are $N$-Morse. If we consider the Euclidean spaces, there is no infinite Morse geodesic. In more general metric spaces, some infinite geodesics are Morse, others are not. Morse geodesics in a proper geodesic metric space behave similarly to geodesics in a hyperbolic metric space. 

\subsection{The Morse boundary}
Now let us define the Morse boundary. All the details can be found in \cite{Cordes}.

Let $X$ be a proper geodesic metric space and $x_0$ be a basepoint. The {\it Morse boundary} $\partial_*X$ of $X$ is the set of equivalence classes of all Morse geodesic rays with basepoint $x_0$ and we say two Morse geodesics rays are {\it equivalent} if they have finite Hausdorff distance. Fix a $N$ Morse gauge, we topologies the set 
$$\partial_*^{N}X_{x_0}=\{[\alpha]\ | \mbox{ there exists an }  N\mbox{-Morse geodesic ray } \beta\in [\alpha] \mbox{ with basepoint } x_0 \}$$ with the compact-open topology. 

Consider the set $\mathcal{M}$, all Morse gauges. We put a partial ordering on $\mathcal{M}$: $N\le N'$ if and only if $N(K, C)\le N'(K, C)$ for all $K, C$. The Morse boundary is defined to be 
$$\partial_*X_{x_0}=\varinjlim_{\mathcal{M}}\partial_*^{N}X_{x_0}$$
with the induced direct limit topology, i.e., a set $V$ is open in $\partial_*X_{x_0}$ if and only if $V\cap \partial_*^NX_{x_0}$ is open in $\partial_*^NX_{x_0}$ for all $N$ Morse gauges.

Another equivalent way to give the compact-open topology on $\partial_*^NX_{x_0}$ is using a system of neighborhoods. 
For a proper geodesic space $X$, the Morse boundary $\partial_*X_{x_0}$ is basepoint independent, we will omit the basepoint from the notation, the Morse boundary is denoted by $\partial_*X$. 

Here let us list some properties of Morse triangles and Morse geodesics. 

\begin{lem}[\cite{liu2019dynamics}]\label{unif M}
For any $N$ Morse gauge, there exists $N'$ such that any segment of an $N$-Morse geodesic is $N'$-Morse.
\end{lem}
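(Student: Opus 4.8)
I will prove the statement directly: fixing an arc-length parametrization of the $N$-Morse geodesic $\gamma$, I will show that a subsegment $\sigma=\gamma|_{[s_p,s_q]}$ is $N'$-Morse for an $N'$ depending only on $N$. So I take an arbitrary $(K,C)$-quasi-geodesic $\alpha$ with endpoints $a=\gamma(s_a)$ and $b=\gamma(s_b)$ on $\sigma$ (hence $s_p\le s_a\le s_b\le s_q$) and aim to bound $d(x,\sigma)$ uniformly for every $x\in\alpha$. Since the endpoints $a,b$ lie on $\gamma$, applying the $N$-Morse property of $\gamma$ directly gives $\alpha\subset\mathcal N_{M}(\gamma)$ with $M=N(K,C)$; the whole difficulty is upgrading closeness to $\gamma$ into closeness to the subsegment $\sigma$.

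First I would fix $x\in\alpha$ and a point $y=\gamma(s_y)\in\gamma$ with $d(x,y)\le M$. If $y\in\sigma$ we are done, so assume $s_y<s_p$ (the case $s_y>s_q$ being symmetric), and note $d(x,\sigma)\le d(x,a)\le M+(s_a-s_y)$; hence it suffices to bound the overshoot $\Delta:=s_a-s_y$. I split $\alpha$ at $x$ into $\alpha_1$ (from $a$ to $x$) and $\alpha_2$ (from $x$ to $b$), and form the concatenation $\hat\alpha:=[y,x]\cdot\alpha_2$. A routine estimate shows $\hat\alpha$ is a $(K,C')$-quasi-geodesic from $y$ to $b$ with $C'=C+(2+K)M$, and since $y,b\in\gamma$ the Morse property again yields $\hat\alpha\subset\mathcal N_{M_1}(\gamma)$ with $M_1=N(K,C')$.

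The next step, which I expect to be the main obstacle, is a crossing argument. Because $\hat\alpha$ travels from $y$ (at parameter $s_y<s_a$) to $b$ (at parameter $s_b>s_a$) while remaining within $M_1$ of the arc-length-parametrized geodesic $\gamma$, a choice of near-point parameters along $\hat\alpha$ is coarsely continuous, changing by at most $2M_1$ across a switch of nearest points, and passes from below $s_a$ to above $s_a$. An intermediate-value argument that tolerates jumps of size $\le 2M_1$ then produces a point $z^*\in\hat\alpha$ with $d(z^*,a)\le 3M_1=:M_2$. Making this coarse continuity precise, and arguing that the parameter genuinely sweeps past $s_a$, is the delicate part.

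Finally I would analyze where $z^*$ lies. If $z^*\in[y,x]$, then $d(a,y)\le M_2+M$, so $\Delta\le M_2+M$. If instead $z^*\in\alpha_2$, then $\alpha$ leaves $a$, reaches $x$ (which is distance $\ge\Delta-M$ from $a$, since $d(a,y)=\Delta$), and later returns within $M_2$ of $a$ at $z^*$; comparing the arc length of this excursion, at least $2(\Delta-M)-M_2$, against $d(a,z^*)\le M_2$ through the quasi-geodesic inequality forces $\Delta\le\tfrac12\bigl((K+1)M_2+KC+2M\bigr)$. Either way $\Delta$ is bounded by a constant $M_3=M_3(N,K,C)$, whence $d(x,\sigma)\le M+M_3$. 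Setting $N'(K,C):=M+M_3(N,K,C)$, enlarged if necessary so that $N'$ is a monotone Morse gauge, shows that $\sigma$ is $N'$-Morse. The same computation handles the symmetric case $s_y>s_q$, as well as the cases where $\gamma$ is a ray or a finite geodesic.
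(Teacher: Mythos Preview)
The paper does not give its own proof of this lemma; it is simply cited from \cite{liu2019dynamics}. So there is nothing in the present paper to compare your argument against. That said, your strategy is sound and is one of the standard ways to prove this fact: use the Morse property of $\gamma$ to get $\alpha\subset\mathcal N_M(\gamma)$, then run a coarse intermediate-value/crossing argument to upgrade this to closeness to the subsegment $\sigma$.

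Two technical points deserve tightening. First, your crossing argument implicitly assumes $\hat\alpha$ is continuous so that near-point parameters on $\gamma$ vary ``coarsely continuously''; a general $(K,C)$-quasi-geodesic need not be continuous. The standard fix is to tame $\hat\alpha$ (replace it by a continuous quasi-geodesic with controlled constants at Hausdorff distance $\le K+C$), or to discretize at unit parameter steps and note that consecutive images are at distance $\le K+C'$, so near-point parameters jump by at most $2M_1+K+C'$. Either route yields your $z^*$ with $d(z^*,a)$ bounded in terms of $N,K,C$ only.

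Second, in the case $z^*\in\alpha_2$ you appeal to ``arc length'', but quasi-geodesics need not be rectifiable and in any case the quasi-geodesic inequality concerns the \emph{parameter}, not arc length. The clean version is: let $t_x<t_{z^*}$ be the parameters of $x$ and $z^*$ on $\alpha$ (with $a$ at parameter $0$). Then $d(a,z^*)\ge K^{-1}t_{z^*}-C$ gives $t_{z^*}\le K(M_2+C)$, while $d(a,x)\le Kt_x+C\le Kt_{z^*}+C$ gives $\Delta-M\le d(a,x)\le K^2(M_2+C)+C$, hence $\Delta\le K^2(M_2+C)+C+M$. This replaces your displayed bound; the conclusion is unchanged.

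With these adjustments your proof goes through and yields $N'(K,C)$ depending only on $N,K,C$.
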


The next lemma says that a geodesic which is close to a Morse geodesic is uniformly Morse. It is an easy exercise we leave to the reader.

\begin{lem}\label{close M}
Let $\alpha: [a, b]\to X$ and $\beta$ be geodesics in a geodesic metric space $X$. Suppose that $\beta$ is $N$-Morse, and $d(\alpha(a), \beta)$, $d(\alpha(b), \beta)\le \epsilon$. Then $\alpha$ is $N'$-Morse where $N'$ depends only on $N$ and $\epsilon$.

\end{lem}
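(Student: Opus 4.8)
The plan is to reduce everything to the Morse property of a single geodesic \emph{segment} of $\beta$, by showing that $\alpha$ and that segment stay within bounded Hausdorff distance. Fix points $p',q'\in\beta$ with $d(\alpha(a),p')\le\epsilon$ and $d(\alpha(b),q')\le\epsilon$ (such points exist since $X$ is proper), and let $\beta'$ be the subsegment of $\beta$ between $p'$ and $q'$. By Lemma~\ref{unif M}, $\beta'$ is $N''$-Morse for some gauge $N''$ depending only on $N$. The key reduction is: once we know $d_{\mathcal{H}}(\alpha,\beta')\le D$ for some $D=D(N,\epsilon)$, any $(K,C)$-quasi-geodesic $\sigma$ with endpoints on $\alpha$ can be tested against $\beta'$ instead, and the $N''$-Morse bound for $\beta'$ transfers back to $\alpha$ at the cost of adding $D$.

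First I would establish $\alpha\subset\mathcal{N}_{D}(\beta')$. Prepend to $\alpha$ a geodesic from $p'$ to $\alpha(a)$ and append a geodesic from $\alpha(b)$ to $q'$, and call the resulting path $\gamma$. Since the two attached pieces have length $\le\epsilon$, a short triangle-inequality computation shows that $\gamma$ is a $(1,4\epsilon)$-quasi-geodesic, and its endpoints $p',q'$ are exactly the endpoints of $\beta'$. The $N''$-Morse property of $\beta'$ then gives $\gamma\subset\mathcal{N}_{N''(1,4\epsilon)}(\beta')$, hence $\alpha\subset\mathcal{N}_{D}(\beta')$ with $D=N''(1,4\epsilon)$.

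The reverse containment $\beta'\subset\mathcal{N}_{D'}(\alpha)$ is the step I expect to be the main obstacle, since the Morse property of $\beta'$ does not by itself prevent $\beta'$ from wandering away from $\alpha$; it must instead be extracted from the fact that $\alpha$ already fellow-travels $\beta'$ and has matching endpoints. I would argue by an intermediate-value argument. For a point $w$ on $\beta'$ away from its endpoints, write $\beta'_1,\beta'_2$ for the two subarcs of $\beta'$ meeting at $w$, and consider the continuous function $s\mapsto d(\alpha(s),\beta'_1)-d(\alpha(s),\beta'_2)$. Using that $\alpha(a),\alpha(b)$ are $\epsilon$-close to $p',q'$, this function is negative at $s=a$ and positive at $s=b$, so it vanishes at some $s^{*}$. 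At that parameter $\alpha(s^{*})$ is within $D$ of each of $\beta'_1,\beta'_2$, which forces two points of $\beta'$ on opposite sides of $w$ to lie within $2D$ of each other; as $\beta'$ is a geodesic, this pins $w$ to within $3D$ of $\alpha(s^{*})$. Points near the endpoints of $\beta'$ are handled directly, giving $\beta'\subset\mathcal{N}_{D'}(\alpha)$ with $D'=\max(3D,3\epsilon)$, and hence $d_{\mathcal{H}}(\alpha,\beta')\le\max(D,D')$.

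Finally I would conclude as outlined. Given a $(K,C)$-quasi-geodesic $\sigma$ with endpoints on $\alpha$, join each endpoint of $\sigma$ to a nearest point of $\beta'$ (at distance $\le D$); this produces a $(K,C')$-quasi-geodesic $\sigma^{+}$ with endpoints on $\beta'$, where $C'$ depends only on $K$, $C$ and $D$. Applying the $N''$-Morse property of $\beta'$ gives $\sigma\subset\sigma^{+}\subset\mathcal{N}_{N''(K,C')}(\beta')$, and then the Hausdorff bound yields $\sigma\subset\mathcal{N}_{N''(K,C')+D'}(\alpha)$. Thus $\alpha$ is $N'$-Morse with $N'(K,C):=N''(K,C')+D'$, a gauge depending only on $N$ and $\epsilon$, as required.
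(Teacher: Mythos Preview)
The paper does not actually prove this lemma; it is stated with the remark ``It is an easy exercise we leave to the reader.'' Your argument is a correct and standard way to carry out that exercise: pass to the subsegment $\beta'$ of $\beta$ between nearby points, use Lemma~\ref{unif M} to get a uniform gauge for $\beta'$, show $\alpha$ and $\beta'$ are at bounded Hausdorff distance (your extension-to-a-$(1,4\epsilon)$-quasi-geodesic step for one inclusion and the intermediate-value step for the other are both fine), and then transfer the Morse property across that Hausdorff bound.

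One small quibble: you justify the existence of $p',q'$ by invoking properness of $X$, but the lemma as stated does not assume $X$ is proper. This is harmless --- either observe that the distance function to $\beta$ restricted to any compact subarc is continuous and attains its infimum, or simply take $p',q'$ at distance $\le 2\epsilon$ (which always exist by the definition of $d(\cdot,\beta)$) and absorb the extra factor into the constants. Everything else goes through unchanged.
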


Recall that a geodesic triangle $T$ is called $\delta$-slim if $\delta$-neighborhood of any two sides of $T$ covers the third side. 

\begin{lem}[Slim Triangles,\cite{charney2019quasi}]\label{slim} 
Let $X$ be a proper geodesic metric space. Let $a, b, c$ be points in $X\cup \partial_*X$. Suppose that two sides of the triangle $T(a, b, c)$ are $N$-Morse, then the third side is $N'$-Morse and the triangle is $\delta_N$-slim, where $N'$ and $\delta_N$ depend only on $N$.
\end{lem}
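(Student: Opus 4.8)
The plan is to reduce both assertions to a single construction: a uniform quasi-geodesic from $b$ to $c$ built out of a subsegment of one Morse side together with a controlled segment attached at a nearest-point projection, which the Morse property of the two given sides forces into a bounded neighbourhood of their union. First I would reduce to the case $a,b,c\in X$. If some vertex lies in $\partial_*X$, I replace it by a point far out along a fixed $N$-Morse representative ray; the finite triangles so obtained have $N$-Morse sides coming from the rays and, by properness together with Arzela--Ascoli, their remaining sides subconverge to the infinite sides of $T(a,b,c)$. Since every estimate below will depend only on $N$, the infinite case follows by passing to the limit. I may also relabel so that the two $N$-Morse sides share the vertex $a$; call them $[a,b]$ and $[a,c]$, with third side $[b,c]$.

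The technical core is an elementary projection estimate. Fix an $N$-Morse geodesic $\gamma$, a point $z$, and a nearest point $m\in\gamma$ to $z$. A direct computation with the triangle inequality, using that $m$ is also a nearest point of $\gamma$ to every point of $[m,z]$, shows that for $s\in\gamma$ and $t\in[m,z]$ one has $d(s,t)\ge\tfrac14\bigl(d(s,m)+d(m,t)\bigr)$; hence the concatenation of any subsegment of $\gamma$ terminating at $m$ with the geodesic $[m,z]$ is a $(4,0)$-quasi-geodesic. Applying this with $\gamma=[a,b]$, $z=c$ and the subsegment $[a,m]$ yields a $(4,0)$-quasi-geodesic from $a$ to $c$ whose endpoints lie on the $N$-Morse geodesic $[a,c]$; the Morse property of $[a,c]$ then forces $[a,m]*[m,c]\subset\mathcal N_{N(4,0)}([a,c])$. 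In particular the projection $m$ of $c$ onto $[a,b]$ lies within $N(4,0)$ of $[a,c]$, and the segment $[m,c]$ lies in a uniform neighbourhood of $[a,c]$.

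Feeding this back, I consider the $(4,0)$-quasi-geodesic $\sigma=[b,m]*[m,c]$ from $b$ to $c$. Its first piece $[b,m]$ is a subsegment of the $N$-Morse geodesic $[a,b]$, hence $N_1$-Morse by Lemma \ref{unif M}; its second piece $[m,c]$ has both endpoints within $N(4,0)$ of the $N$-Morse geodesic $[a,c]$, hence is $N_2$-Morse by Lemma \ref{close M}; and $\sigma$ as a whole lies in $\mathcal N_{N(4,0)}([a,b]\cup[a,c])$. It then remains to see that gluing the two uniformly Morse pieces along the genuine corner at $m$ produces a uniformly Morse quasi-geodesic, so that the geodesic $[b,c]$, which has the same endpoints as $\sigma$, stays within bounded Hausdorff distance of $\sigma$. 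Combining the two inclusions gives $[b,c]\subset\mathcal N_{\delta_N}([a,b]\cup[a,c])$, the desired $\delta_N$-slimness, and a final application of Lemma \ref{close M} along $\sigma$ shows that $[b,c]$ itself is $N'$-Morse with $N'$ depending only on $N$.

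I expect the main obstacle to be the gluing step. A concatenation of two geodesics is not in general a quasi-geodesic, and even when it is, the fact that it inherits a Morse gauge from its pieces is not formal: it must be extracted from the projection estimate rather than from the naive triangle inequality, since a priori $[b,c]$ is not yet known to be Morse and circular reasoning must be avoided. The reduction to interior vertices by limits, and the verification that the nearest-point projection onto an $N$-Morse geodesic is coarsely well defined with constant depending only on $N$, are the remaining points requiring care.
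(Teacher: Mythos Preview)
The paper does not give its own proof of this lemma; it is quoted from Charney--Cordes--Murray \cite{charney2019quasi} (and ultimately rests on lemmas in \cite{Cordes}). So there is no in-paper argument to compare against, and your proposal is an attempt to reconstruct the proof from first principles.

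Your projection estimate is correct, in fact with constant $3$ rather than $4$: if $m$ is a nearest point of $\gamma$ to $z$ then $m$ is also nearest to every $t\in[m,z]$, whence $d(t,m)\le d(t,s)$ and $d(s,m)\le d(s,t)+d(t,m)\le 2d(s,t)$ for any $s\in\gamma$, giving $d(s,m)+d(m,t)\le 3d(s,t)$. The reduction to interior vertices is also fine in outline, though you should be slightly more careful: the truncation point replacing an ideal vertex $a$ must be chosen close to \emph{both} $N$-Morse sides through $a$ (this follows since two $N$-Morse rays to the same boundary point eventually fellow-travel), not merely on one fixed representative ray.

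The genuine gap is precisely the one you flag. You need that the $(3,0)$-quasi-geodesic $\sigma=[b,m]\ast[m,c]$, assembled from two uniformly Morse pieces, is itself uniformly Morse, so that the geodesic $[b,c]$ is forced into a bounded neighbourhood of $\sigma$. This is not a closing detail but essentially the entire content of the lemma, and your sketch does not resolve it. The obvious attempts are circular: you cannot use Morseness of $[b,c]$ (not yet known) to trap $\sigma$, and Lemma~\ref{close M} as stated applies only when both endpoints lie near a \emph{single} Morse geodesic, which is not the situation for $[b,c]$ relative to $[a,b]\cup[a,c]$. Your final sentence (``a final application of Lemma~\ref{close M} along $\sigma$'') therefore does not go through as written. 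In the literature this step is handled either by passing through the equivalence of the Morse property with a contracting-type condition (closest-point projections of disjoint balls have diameter bounded in terms of $N$), from which thinness follows by the standard hyperbolic argument, or by proving a dedicated concatenation lemma to the effect that a $(K,C)$-quasi-geodesic built from finitely many $N$-Morse segments is $N'$-Morse. Either route can be made to work, but you should expect it to be the bulk of the proof rather than a remark at the end.
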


\subsection{Visual metrics and Topology on hyperbolic boundaries}

In this subsection, we will give a quick review about the construction of sequential boundary and visual metrics for a hyperbolic space. All of this can be found in \cite{ghys1990groupes} or \cite[Chpater III.H]{bridson}. 
 
 Let us give the definition of hyperbolic space in the sense of Gromov.
 \begin{defn}
 Let $X$ be a metric space and let $x, y, z\in X$. The {\it Gromov product} of $x$ and $y$ with respect to $z$ is defined by 
 $$(x\cdot y)_{z}=\frac{1}{2}(d(z, x)+d(z, y)-d(x, y)).$$
 \end{defn}

 \begin{defn}\label{Ghyp}
 Let $X$ be a metric space and let $\delta\ge 0$ be a constant. We call $X$ is $\delta$-hyperbolic if for all $w, x, y, z\in X$, we have $$(x\cdot y)_w\ge\min\{(x\cdot z)_w, (z\cdot y)_w\}-\delta.$$
 \end{defn}

Let $X$ be a $\delta$-hyperbolic space. 
Let $p\in X$ be a basepoint and $(x_n)$ be a sequence in $X$. We say the sequence $(x_n)$ {\it converges at infinity} if $(x_i\cdot x_j)_{p}\to \infty$ as $i, j\to \infty$. Two convergent sequences $(x_n)$, $(y_m)$ are called {\it equivalent} if $(x_i\cdot y_j)_{p}\to \infty$ as $i, j\to \infty$. We use $\lim x_n$ to denote the equivalence class of $(x_n)$.

The {\it sequential boundary} of $X$, $\partial_sX$ is defined to be the set of equivalence classes of convergent sequences. In our paper, we omit the notation $s$. For a hyperbolic space $X$, $\partial X$ is denoted as its boundary.
We extend the Gromov product to $\partial X$ by:
$$(x\cdot y)_{p}=\sup \underset{n, m\to \infty}{\lim\inf}(x_n\cdot y_m)_{p},$$
where the supremum is taken over all sequences $(x_n),$ $(y_m)\in X$ such that $x=\lim x_n, y=\lim y_m$. 

We have the following properties for the Gromov product.

\begin{lem}\label{GP}
Let $X$ be a $\delta$-hyperbolic space and $p\in X$ be a basepoint. Then:

\begin{enumerate}
\item

$(x\cdot y)_p=\infty$ if and only if $x=y\in \partial X$.

\item

For all $x, y\in \partial X$ and all sequences $(x_n), (y_m)\in X$ with $x=\lim x_n, y=\lim y_m$,
we have $$(x\cdot y)_p-2\delta\le \underset{n, m\to \infty}{\lim\inf}(x_n\cdot y_m)_p\le (x\cdot y)_p.$$

\item
For any $x, y, z\in \partial X\cup X$,  we have $(x\cdot y)_{p}\ge \min\{(x\cdot z)_p, (y\cdot z)_p\}-2\delta.$
\end{enumerate}

\end{lem}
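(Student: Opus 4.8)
The three assertions are the standard facts extending the Gromov product from $X$ to $\partial X$, and I would organize the proof so that the lower bound in part (2) carries all the weight. The implication $x=y\in\partial X\Rightarrow (x\cdot y)_p=\infty$ of part (1) and the upper bound $\liminf(x_i\cdot y_j)_p\le (x\cdot y)_p$ in part (2) are immediate from the definitions: the former because any representative sequence of a boundary point converges at infinity, so $(x_i\cdot x_j)_p\to\infty$, and the latter because $(x\cdot y)_p$ is by definition a supremum of such $\liminf$'s. Everything else reduces to the lower bound in part (2).

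For that lower bound, fix $x,y\in\partial X$ and \emph{arbitrary} representatives $(x_n)\to x$, $(y_m)\to y$. I would choose near-optimal representatives $(x'_i)\to x$ and $(y'_j)\to y$ realizing the supremum, so that $\liminf_{i,j}(x'_i\cdot y'_j)_p\ge (x\cdot y)_p-\epsilon$. Since $(x_n)$ and $(x'_i)$ represent the same boundary point they are equivalent, whence $(x_i\cdot x'_k)_p\to\infty$, and likewise $(y_j\cdot y'_l)_p\to\infty$. Applying the interior four-point inequality of Definition \ref{Ghyp} twice, interpolating first through $x'_k$ and then through $y'_l$, gives
\[
(x_i\cdot y_j)_p\ \ge\ \min\{(x_i\cdot x'_k)_p,\,(x'_k\cdot y'_l)_p,\,(y'_l\cdot y_j)_p\}-2\delta .
\]
Letting the indices tend to infinity, the first and third terms blow up by equivalence, so the minimum is eventually realized by $(x'_k\cdot y'_l)_p$; taking $\liminf$ and then $\epsilon\to 0$ yields $\liminf_{i,j}(x_i\cdot y_j)_p\ge (x\cdot y)_p-2\delta$. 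The loss of exactly $2\delta$ is produced by the two interpolation steps, and the same computation, read with $(x\cdot y)_p=\infty$, shows that in that case every representative satisfies $(x_i\cdot y_j)_p\to\infty$.

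Parts (1) and (3) then follow. For the reverse direction of (1): if one of the points, say $x$, lies in $X$, the elementary bound $(x\cdot y_m)_p\le d(p,x)$ coming from the triangle inequality forces $(x\cdot y)_p\le d(p,x)<\infty$, so $(x\cdot y)_p=\infty$ can only occur when $x,y\in\partial X$; in that case the lower bound just proved gives $(x_i\cdot y_j)_p\to\infty$ for arbitrary representatives, which is precisely the condition that $(x_n)$ and $(y_m)$ are equivalent, hence $x=y$. For part (3) I would first dispose of the case $x,y,z\in X$, where the claim is Definition \ref{Ghyp} itself (even with $\delta$ in place of $2\delta$); for boundary points, pick representative sequences (constant sequences for any interior points), apply the interior inequality termwise, pass to $\liminf$, and invoke the lower bound of part (2) to replace the sequence-products by the extended products $(x\cdot z)_p$ and $(z\cdot y)_p$, which upgrades the defect to $2\delta$.

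The main obstacle is precisely the bookkeeping forced by the $\sup\liminf$ definition of $(x\cdot y)_p$: the substantive inequalities must be shown to hold for \emph{arbitrary} representing sequences rather than just optimal ones, and one must take the limits in an order for which $\liminf$ of a minimum dominates the minimum of the $\liminf$'s. Choosing near-optimal representatives for each pairwise product and using part (2) to compare them is what resolves this, and careful tracking of the two interpolations is what keeps the constant at $2\delta$ rather than a larger multiple of $\delta$.
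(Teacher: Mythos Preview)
The paper does not give its own proof of this lemma; it is stated as a standard fact about the extended Gromov product, with the implicit references being \cite{ghys1990groupes} and \cite[III.H, 3.17]{bridson}. Your argument for parts (1) and (2) is the standard one and is correct.

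For part (3), though, the recipe you actually describe---apply the interior four-point inequality once (costing $\delta$), pass to $\liminf$, and then invoke the \emph{lower} bound of part (2) to replace $\liminf(x_n\cdot z_k)_p$ by $(x\cdot z)_p$ on the right-hand side---produces a defect of $3\delta$, not $2\delta$: the lower bound of part (2) loses $2\delta$ each time it is used as a lower estimate, on top of the $\delta$ already spent. To land on $2\delta$ you should mirror your own proof of part (2): pick near-optimal representatives $(x_n),(z'_k)$ for $(x\cdot z)_p$ and $(y_m),(z''_l)$ for $(y\cdot z)_p$, interpolate \emph{twice} through $z'_k$ and $z''_l$ (using $(z'_k\cdot z''_l)_p\to\infty$ since both represent $z$), and then use only the cost-free inequality $(x\cdot y)_p\ge\liminf(x_n\cdot y_m)_p$ on the left. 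Your closing paragraph hints that you have this picture in mind, but the explicit steps you wrote for part (3) do not match it.
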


\begin{defn}
For a hyperbolic space $X$ with basepoint $p$, we say a metric $d$ on $\partial X$ is a {\it visual metric} with parameter $\epsilon$ if there exist constants $k_1, k_2>0$ so that $$k_1 e^{-\epsilon(x\cdot y)_{p}}\le d(x,y)\le k_2 e^{-\epsilon(x\cdot y)_{p}}$$ for all $x, y\in \partial{X}$.
\end{defn}

Here is the standard construction of the metrics on $\partial X$. If $x, y\in \partial X, p\in X, \epsilon>0$, 
let $$d_{\partial X, p, \epsilon}(x, y)=d_{p, \epsilon}(x, y)=\inf \left\{\sum_{i=1}^{n}e^{-\epsilon(x_{i-1}\cdot x_{i})_{p}}\right\},$$
where the infimum is taken over all finite sequence $x=x_1, x_2,..., x_n=y$ in $\partial X$, no bound on $n$.

\begin{thm}\cite[Section 7.3]{ghys1990groupes}\label{metric}
Let $X$ be a $\delta$-hyperbolic space. Let $\epsilon$ be a positive constant so that $e^{2\delta\epsilon}\le \sqrt{2}$. Then $\displaystyle{ (3-2e^{2\delta\epsilon})e^{-\epsilon(x\cdot y)_{p}}\le d_{p, \epsilon}(x, y)\le e^{-\epsilon(x\cdot y)_{p}}}$ for any $x, y\in \partial X$
\end{thm}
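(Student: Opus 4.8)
The upper bound is immediate: taking the trivial chain consisting of just $x$ and $y$ in the definition of $d_{p,\epsilon}$ shows $d_{p,\epsilon}(x,y)\le e^{-\epsilon(x\cdot y)_p}$, so only the lower bound requires work. The plan for the lower bound is to treat $\rho(x,y):=e^{-\epsilon(x\cdot y)_p}$ as an approximate ultrametric and run a chain-subdivision induction. Writing $k:=e^{2\delta\epsilon}$, the first step is to convert the Gromov-product inequality of Lemma \ref{GP}(3) into a multiplicative statement about $\rho$. Since $(x\cdot y)_p\ge \min\{(x\cdot z)_p,(z\cdot y)_p\}-2\delta$ and $t\mapsto e^{-\epsilon t}$ is decreasing, using $-\min\{a,b\}=\max\{-a,-b\}$ and exponentiating yields
$$\rho(x,y)\le k\,\max\{\rho(x,z),\rho(z,y)\}$$
for all boundary points $x,y,z$. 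This is the only place the hyperbolicity constant enters, and it is responsible for the factor $e^{2\delta\epsilon}$ appearing in the statement.

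Next I would prove, by induction on the number $n$ of edges, that every chain $x=x_0,x_1,\dots,x_n=y$ satisfies $\sum_{i=1}^n\rho(x_{i-1},x_i)\ge (3-2k)\,\rho(x,y)$; taking the infimum over all chains then gives the claimed lower bound. The case $S:=\sum_{i=1}^n\rho(x_{i-1},x_i)=0$ is trivial, since by Lemma \ref{GP}(1) this forces $x_0=\dots=x_n$, so assume $S>0$. The base case $n=1$ holds because $3-2k\le 1$ (as $k\ge1$). For the inductive step I would set $S=\sum_{i=1}^n\rho(x_{i-1},x_i)$ and choose the largest index $m$ with $\sum_{i=1}^m\rho(x_{i-1},x_i)\le S/2$; such an $m$ exists (the empty prefix has length $0$) and satisfies $m<n$. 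By maximality the tail $\sum_{j=m+2}^n\rho(x_{j-1},x_j)$ is also at most $S/2$, while the middle edge obeys $\rho(x_m,x_{m+1})\le S$. Both the prefix $x_0,\dots,x_m$ and the suffix $x_{m+1},\dots,x_n$ have strictly fewer than $n$ edges, so the induction hypothesis bounds $\rho(x_0,x_m)$ and $\rho(x_{m+1},x_n)$ by $\tfrac{1}{3-2k}\cdot\tfrac{S}{2}$.

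Applying the approximate ultrametric inequality twice (first splitting at $x_m$, then splitting $\rho(x_m,x_n)$ at $x_{m+1}$) gives
$$\rho(x_0,x_n)\le \max\bigl\{\,k\,\rho(x_0,x_m),\ k^2\,\rho(x_m,x_{m+1}),\ k^2\,\rho(x_{m+1},x_n)\,\bigr\},$$
and the task is to check that each of the three terms is at most $\tfrac{S}{3-2k}$, which closes the induction. The main obstacle is precisely this three-way verification, and it is where the hypothesis $e^{2\delta\epsilon}\le\sqrt2$ becomes essential. The first term requires only $\tfrac{k}{2}\le1$, i.e.\ $k\le 2$; the middle term reduces to $k^2(3-2k)\le 1$, which rearranges to $(k-1)^2(2k+1)\ge 0$ and so holds for every $k\ge1$; but the third term reduces exactly to $\tfrac{k^2}{2}\le1$, i.e.\ $k^2\le 2$, which is the assumption $e^{2\delta\epsilon}\le\sqrt2$. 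One also uses $\sqrt2<\tfrac32$ to guarantee $3-2k>0$, so that the constant $3-2k$ is positive and the divisions preserve inequalities. Once all three bounds are in place the induction completes, and passing to the infimum over chains yields $d_{p,\epsilon}(x,y)\ge(3-2e^{2\delta\epsilon})e^{-\epsilon(x\cdot y)_p}$.
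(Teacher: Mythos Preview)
The paper does not supply its own proof of this theorem; it is quoted directly from \cite[Section~7.3]{ghys1990groupes} and used as a black box. Your argument is correct and is precisely the classical chain--subdivision (Frink-type) proof that appears in Ghys--de~la~Harpe: convert the extended Gromov inequality of Lemma~\ref{GP}(3) into the approximate ultrametric $\rho(x,y)\le k\max\{\rho(x,z),\rho(z,y)\}$ with $k=e^{2\delta\epsilon}$, then run the prefix/middle/suffix induction to show every chain has $\rho$-length at least $(3-2k)\rho(x,y)$. Your verification of the three cases (requiring $k\le 2$, $k^2(3-2k)\le 1$, and $k^2\le 2$ respectively) is accurate, including the factorization $1-k^2(3-2k)=(k-1)^2(2k+1)$, and the degenerate cases ($S=0$, empty prefix or suffix) are handled properly. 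There is nothing to correct.
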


\begin{rmk}\label{B-eq}
The canonical gauge $\mathcal{G}(X)$ on $\partial X$ is the set of all metrics of the form $d=d_{p, \epsilon}$. Two metrics $(\partial X, d_{p_1, \epsilon_1})$ and $(\partial X, d_{p_2, \epsilon_2})$ are {\it B-equivalent} if there exists a constant $k>0$ such that 
$$k^{-1}d_{p_2, \epsilon_2}^{\epsilon_1}\le d_{p_1, \epsilon_1}^{\epsilon_2}\le kd_{p_2, \epsilon_2}^{\epsilon_1}.$$

The theorem says that if $p_1, p_2\in X$ and $\epsilon_1, \epsilon_2>0$ such that $e^{2\delta\epsilon_1}, e^{2\delta\epsilon_2}\le\sqrt{2}$, then $(\partial X, d_{p_1, \epsilon_1})$ and $(\partial X, d_{p_2, \epsilon_2})$ are B-equivalent. 
A metric $d_{p, \epsilon}$ in $\mathcal{G}(X)$ induces a topology on $\partial X$ and this topology does not depend on the choice of the metric $d_{p, \epsilon}$ in $\mathcal{G}(X)$.
\end{rmk}

\subsection{Metric Morse boundaries}
With the Gromov product in hand, we can define metric Morse boundaries, this is the work of Cordes and Hume in their paper \cite{cordes2017stability}. I will give a quick review of their work. 

Let $(X, d)$ be a geodesic metric space. For a Morse gauge $N$, let $$X_{x_0}^{(N)}=\{ x\in X\ | \mbox{ all geodesics between } x \mbox{ and } x_0 \mbox{ are } N\mbox{-Morse } \}.$$ This metric space is not necessarily a geodesic space. However, it is a $32N(3, 0)$-hyperbolic spaces in the sense of Definition \ref{Ghyp}. 



Thus we get a sequential boundary $\partial X_{x_0}^{(N)}$ with a visual metric $d_{x_0,\epsilon_{N}}\in \mathcal{G}(X_{x_0}^{(N)})$ for some visibility parameter $\epsilon_N$. In our paper, it is convenient to fix a visibility parameter $\epsilon_N$ for each $N$ and to work with this visual metric $(\partial X_{x_0}^{(N)}, d_{x_0, \epsilon_N})$.

\begin{defn}
Let $x, y\in \partial X_{x_0}^{(N)}$. The {\it $N$-Gromov product} of $x$ and $y$ is defined by 
$$(x\cdot_N y)_{x_0}=\sup \underset{n, m\to \infty}{\lim\inf}(x_n\cdot y_m)_{x_0},$$
where the supremum is taken over all sequences $(x_n),$ $(y_m)$ in $X_{x_0}^{(N)}$ such that $x=\lim x_n, y=\lim y_m$.

The Gromov products of $x, y\in \partial X_{x_0}^{(N)}$ depends on $N$. From Lemma 3.11 in \cite{cordes2017stability}, if $x, y\in \partial X_{x_0}^{(N)}$ and $N\le N'$, then 
$$(x\cdot_N y)_{x_0}\le(x\cdot_{N'} y)_{x_0}\le (x\cdot_N y)_{x_0}+64N'(3, 0).$$

\end{defn}

\subsection {Relationship between $\partial X_{x_0}^{(N)}$ and $\partial_*^{N} X_{x_0}$}

Cordes and Hume \cite{cordes2017stability} gave the definition of the space $X_{x_0}^{(N)}$ and proved many properties. In this subsection, let us review these properties since we will use them in later sections. Also we need to be more careful about the space $X_{x_0}^{(N)}$ since it is not necessarily geodesic.

Now assume that $(X, d)$ is a proper geodesic metric space. We will discuss the relationship between $\partial X_{x_0}^{(N)}$ and $\partial_*^{N}X_{x_0}$. 
Note that $\partial X_{x_0}^{(N)}$ is the sequential boundary with respect to the Gromov product and $\partial_*^{N}X_{x_0}$ is the $N$-Morse boundary with the compact-open topology.

The Morse gauge of segments of an $N$-Morse geodesic might be different from $N$, so the natural map from $\partial_*^N{X_{x_0}}$ to $\partial X_{x_0}^{(N)}$ does not always exist, see figure \ref{example}. 
This problem is not crucial, we will use $\partial X_{x_0}^{(N')}$ instead of $\partial X_{x_0}^{(N)}$ for some Morse gauge $N'$.
Thus we have the following theorem.

\begin{figure}[!ht]
\labellist

\pinlabel $x_0$ at 17 11
\pinlabel $1$ at 177 25
\pinlabel $1$ at 75 -5
\pinlabel $3$ at 80 25
\pinlabel $3$ at 125 25
\pinlabel $3$ at 145 -5

\pinlabel $1$ at 317 25
\pinlabel $1$ at 215 -5
\pinlabel $3$ at 220 25
\pinlabel $3$ at 265 25
\pinlabel $3$ at 285 -5

\endlabellist
\includegraphics[width=5.in]{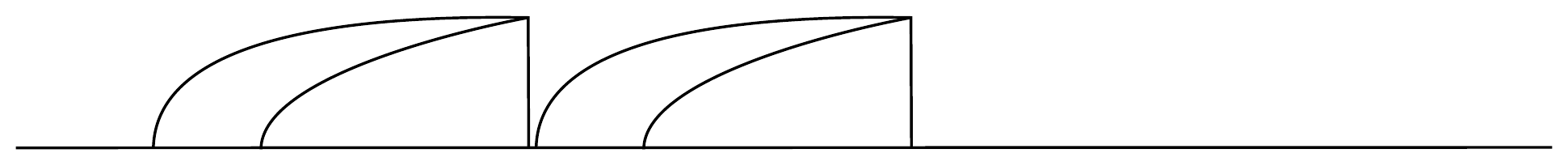}
\caption{The horizontal line is 
$\bbR_{\ge 0}$. Let $x_0=0$.
At each triple (t, t+1, t+4) of points, a $(3,3,1)$-type graph is attached. This horizontal line is a $2$-Morse geodesic ray. But $\partial X_{x_0}^{(2)}$ is empty.}
\label{example}
\end{figure}

\begin{thm}\label{s and *}

For any $N$ Morse gauge, there exists $N'$ depending only on $N$ such that the following holds.

Let $X$ be a proper geodesic metric space. Let $x_0$ be a basepoint.
There exist two natural embeddings 
$$i_N: \partial X_{x_0}^{(N)}\to \partial_*^{N}X_{x_0}, \mbox{ and }$$  
$$j_{N, N'}: \partial_*^{N}X_{x_0} \to \partial X_{x_0}^{(N')}.$$
\end{thm}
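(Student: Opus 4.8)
We need to construct two maps: $i_N$ from the sequential boundary $\partial X_{x_0}^{(N)}$ into the compact-open Morse boundary $\partial_*^{N}X_{x_0}$, and $j_{N,N'}$ going the other way into a possibly larger stratum $\partial X_{x_0}^{(N')}$. Let me think carefully about what each direction requires and where the difficulty lies.

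Let me start with $i_N$. Take a point in $\partial X_{x_0}^{(N)}$, represented by a sequence $(x_n)$ converging at infinity, where every $x_n$ lies in $X_{x_0}^{(N)}$, meaning all geodesics from $x_0$ to $x_n$ are $N$-Morse. Since $X$ is proper, I can pass to geodesic segments $[x_0, x_n]$ and use Arzelà–Ascoli to extract a limiting geodesic ray $\gamma$ based at $x_0$.

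Wait — I need to check that $\gamma$ is $N$-Morse. The segments $[x_0, x_n]$ are $N$-Morse by hypothesis, and being $N$-Morse is a "closed" condition under the compact-open convergence (uniform convergence on compacta), so the limit ray $\gamma$ should remain $N$-Morse. Actually, I should verify: if each $[x_0,x_n]$ is $N$-Morse and they converge uniformly on compacta to $\gamma$, then any quasigeodesic with endpoints on $\gamma$ has endpoints that are limits of points on the $[x_0,x_n]$, so it lies in the $N$-neighborhood of each and hence of $\gamma$. That feels right. So $\gamma \in \partial_*^N X_{x_0}$.

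Then I'd need to show this assignment is well-defined (independent of the representative sequence and of the Arzelà–Ascoli subsequence) and that it's injective and continuous. Well-definedness: two equivalent convergent sequences have Gromov products going to infinity, which forces the limiting rays to stay within bounded Hausdorff distance — this is the standard hyperbolic-space correspondence between sequential and geodesic boundaries, applied within the hyperbolic space $X_{x_0}^{(N)}$ (which the excerpt tells us is $32N(3,0)$-hyperbolic). Injectivity is the contrapositive. Continuity relating the visual-metric topology to the compact-open topology is again the standard fact that these two topologies agree on the boundary of a proper hyperbolic space.

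Now $j_{N,N'}$, which I suspect is the harder direction and the reason $N'$ appears. Start with an $N$-Morse ray $\gamma$ based at $x_0$; I want to produce a point of $\partial X_{x_0}^{(N')}$, i.e., a convergent sequence all of whose terms have \emph{all} geodesics to $x_0$ being $N'$-Morse. The natural candidate sequence is $(\gamma(n))_n$. The obstruction — exactly the one illustrated in Figure \ref{example} — is that $\gamma(n)$ need not lie in $X_{x_0}^{(N)}$: although $[x_0,\gamma(n)]$ is a subsegment of the $N$-Morse ray and $\gamma$ passes through $\gamma(n)$, there may be \emph{other} geodesics from $x_0$ to $\gamma(n)$ that are far from $N$-Morse, so $\gamma(n) \notin X_{x_0}^{(N)}$. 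This is why we cannot land in $\partial X_{x_0}^{(N)}$ and must enlarge to $N'$.

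Here is where Lemmas \ref{unif M} and \ref{slim} do the work. By Lemma \ref{unif M}, the subsegment $[x_0,\gamma(n)]$ of the $N$-Morse ray $\gamma$ is $N''$-Morse for some $N''$ depending only on $N$. Now let $\sigma$ be \emph{any} geodesic from $x_0$ to $\gamma(n)$. The bigon (or degenerate triangle) formed by $\sigma$ and $[x_0,\gamma(n)]$ shares both endpoints; since one side $[x_0,\gamma(n)]$ is $N''$-Morse, I want to conclude $\sigma$ is $N'$-Morse with $N'$ depending only on $N$. The clean tool is Lemma \ref{slim}: in the triangle with two vertices $x_0,\gamma(n)$ (and a degenerate third vertex, or taking the two sides to be $\sigma$ and $[x_0,\gamma(n)]$), two sides being $N''$-Morse forces the configuration to be $\delta_{N''}$-slim and the remaining side to be controlled; combined with Lemma \ref{close M}, a geodesic $\sigma$ that must stay within bounded distance of the $N''$-Morse geodesic $[x_0,\gamma(n)]$ is itself $N'$-Morse for a uniform $N'$. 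Thus $\gamma(n) \in X_{x_0}^{(N')}$ for all $n$, and the sequence $(\gamma(n))$ defines a point of $\partial X_{x_0}^{(N')}$. I then check this is well-defined on Hausdorff-equivalence classes (equivalent rays give sequences with Gromov product tending to infinity, hence the same boundary point) and injective, and that it is continuous and is a genuine inverse-compatible partner to $i_N$ on the overlap.

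The main obstacle is precisely producing the uniform gauge $N'$ in the construction of $j_{N,N'}$: controlling \emph{all} geodesics from $x_0$ to $\gamma(n)$, not merely the one lying along $\gamma$. I expect Lemmas \ref{unif M}, \ref{close M}, and \ref{slim} to combine to give this, with the dependence of $N'$ on $N$ alone coming from the uniformity in those lemmas; verifying that these bounds truly depend only on $N$ (and not on $n$ or on the particular ray) is the delicate bookkeeping step.
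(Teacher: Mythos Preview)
Your plan is correct and matches the paper's approach: the paper's proof invokes Lemma~\ref{unif M} to produce the gauge $N'$, cites Cordes--Hume \cite[Theorem~3.14]{cordes2017stability} for the construction and well-definedness of $i_N$ and $j_{N,N'}$, and then cites \cite[III.H, Remark~3.17]{bridson} (properness of $X$) for the upgrade from continuous injection to embedding---exactly the ingredients you identified. One minor simplification: in building $j_{N,N'}$ you do not need Lemma~\ref{slim} on a degenerate bigon; once Lemma~\ref{unif M} makes the subsegment $\gamma|_{[0,n]}$ $N''$-Morse, any other geodesic $\sigma$ from $x_0$ to $\gamma(n)$ shares both endpoints with it, so Lemma~\ref{close M} with $\epsilon=0$ immediately gives that $\sigma$ is $N'$-Morse.
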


\begin{proof}
From Lemma \ref{unif M}, any segments of $N$-Morse geodesics are $N'$-Morse, where $N'$ depends only on $N$. We follow the proof of Theorem 3.14 in \cite{cordes2017stability}. They constructed two maps $i_{N}$ and $j_{N, N'}$ and showed that $i_N$ is well-defined. It is not hard to show that these two maps are injections. Following the Remark 3.17 in \cite[III.H.]{bridson}, when $X$ is proper, these injections $i_N$ and $j_{N, N'}$ are embeddings.
\end{proof}

\begin{rmk}\label{2 tops}

From the above theorem, we conclude that there exists a homeomorphism between $\partial_*X_{x_0}$ and $\varinjlim_{\mathcal{M}}\partial X_{x_0}^{(N)}$.
Lemma 5.4 in \cite{liu2019dynamics} tells us that the subspace topology on $\partial_*^{N}X_{x_0}$ induced by the inclusion map from $\partial_*^NX_{x_0}$ to $\partial_*X_{x_0}$ is the compact-open topology of $\partial_*^NX_{x_0}$. 
Together with Theorem \ref{s and *}, when $X$ is a proper geodesic metric space, the subspace topology of $\partial X_{x_0}^{(N)}$, which is induced by the inclusion map from $\partial X_{x_0}^{(N)}$ to $\partial_* X_{x_0}$, agrees with the topology of $\partial X_{x_0}^{(N)}$ induced by a visual metric $d_{x_0, \epsilon_N}\in \mathcal{G}(X_{x_0}^{(N)})$.
\end{rmk}

In the proof of Theorem \ref{thm 2}, we will consider convergence sequences in different topological spaces. We need the following lemma which follows from Lemma 5.3 in \cite{liu2019dynamics} and Theorem \ref{s and *}.

\begin{lem}\label{conv} Let $x_0$ be a basepoint in a proper geodesic metric space $X$. Let $p_n$ and $p$ be points in $\partial_*X$. Then the following are equivalent: 
\begin{enumerate}
\item The sequence $p_n$ converges to $p$ in the topology of $\partial_*X_{x_0}$.
\item There exists $N$ such that $p_n, p\in \partial_*^{N}X_{x_0}$ and $p_n$ converges to $p$ in the topology of $\partial_*^{N}X_{x_0}$. 
\item There exists $N'$ such that $p_n, p\in \partial X_{x_0}^{(N')}$ and $p_n$ converges to $p$ in the topology of $\partial X_{x_0}^{(N')}$. 
\item There exists $N'$ such that $p_n, p\in \partial X_{x_0}^{(N')}$ and $d_{x_0, \epsilon_{N'}}(p_n, p)\to 0$. 

\end{enumerate}
\end{lem}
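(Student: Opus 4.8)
The plan is to prove the chain of equivalences by passing through the three models of the boundary in turn, using the cited convergence lemma for the first equivalence and the embeddings of Theorem \ref{s and *} for the remaining ones. Throughout I keep in mind that each condition is existentially quantified over the Morse gauge, so the gauge is permitted to grow as we move between models.

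First I would establish $(1)\Leftrightarrow(2)$. The backward direction is immediate: the canonical inclusion $\partial_*^NX_{x_0}\hookrightarrow \partial_*X_{x_0}$ into the direct limit is continuous, so convergence in some stratum $\partial_*^NX_{x_0}$ forces convergence in $\partial_*X_{x_0}$. The forward direction is the substantive one and is exactly the content of Lemma 5.3 in \cite{liu2019dynamics}. A priori a sequence that converges in a direct-limit topology need not be captured by a single stratum, but the nested structure of the Morse strata, together with properness of $X$ (which controls how the gauge of $p$ and the gauges of the $p_n$ interact), guarantees that one can find a single gauge $N$ with $p_n,p\in\partial_*^NX_{x_0}$ and $p_n\to p$ there. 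I would cite this lemma directly rather than reprove it.

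Next I would handle $(2)\Leftrightarrow(3)$ using Theorem \ref{s and *}. Assuming $(2)$ with the relevant gauge $N$, I apply the embedding $j_{N,N'}\colon \partial_*^NX_{x_0}\to\partial X_{x_0}^{(N')}$; since this is a \emph{topological} embedding (guaranteed by properness of $X$), the sequence $j_{N,N'}(p_n)$ converges to $j_{N,N'}(p)$ in $\partial X_{x_0}^{(N')}$, which is $(3)$ for this $N'$. Conversely, assuming $(3)$ for some gauge $N'$, the embedding $i_{N'}\colon \partial X_{x_0}^{(N')}\to\partial_*^{N'}X_{x_0}$ transports the convergence back into the stratum $\partial_*^{N'}X_{x_0}$, yielding $(2)$ with gauge $N'$. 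The change of gauge from $N$ to $N'$ (or vice versa) causes no difficulty precisely because both statements are existential in the gauge. Finally, $(3)\Leftrightarrow(4)$ is a restatement of the identification of topologies recorded in Remark \ref{2 tops}: for a proper geodesic $X$, the topology on $\partial X_{x_0}^{(N')}$ coincides with the metric topology induced by the visual metric $d_{x_0,\epsilon_{N'}}\in\mathcal{G}(X_{x_0}^{(N')})$, so $p_n\to p$ in the topology of $\partial X_{x_0}^{(N')}$ is literally the assertion that $d_{x_0,\epsilon_{N'}}(p_n,p)\to 0$.

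The main obstacle is the forward direction of $(1)\Rightarrow(2)$, namely trapping a direct-limit-convergent sequence inside a single Morse stratum; this is where the underlying geometry (properness and the controlled behavior of Morse gauges) is genuinely needed, and it is exactly what the cited Lemma 5.3 supplies. Everything else reduces to bookkeeping with the two embeddings $i_N$, $j_{N,N'}$ and the visual-metric identification, both of which are already established earlier in the excerpt.
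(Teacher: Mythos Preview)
Your proposal is correct and follows essentially the same approach as the paper, which simply states that the lemma follows from Lemma~5.3 in \cite{liu2019dynamics} together with Theorem~\ref{s and *}. You have supplied the natural details: the cited lemma handles $(1)\Leftrightarrow(2)$, the embeddings $i_{N'}$ and $j_{N,N'}$ from Theorem~\ref{s and *} handle $(2)\Leftrightarrow(3)$, and $(3)\Leftrightarrow(4)$ is just the identification of the Gromov-boundary topology with its visual-metric topology.
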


With Theorem \ref{s and *}, it is not hard to show the following. In the case of a proper geodesic hyperbolic space, we have this \cite[III.H. 3.18(3)]{bridson}.
\begin{lem}\label{dist-GP}
For any Morse gauge $N$, there exists a constant $C_N>0$ such that the following holds.

Let $X$ be a proper geodesic metric space and $x_0\in X$ be a basepoint. For any $p, q\in \partial X_{x_0}^{(N)}$, let $\gamma$ be any geodesic between $p$ and $q$.
Then we have $|(p._{N} q)_{x_0}-d_X(x_0, \gamma)|\le C_N.$


\end{lem}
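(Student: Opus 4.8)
The goal is to show that for any Morse gauge $N$, there is a constant $C_N$ such that for all $p,q\in\partial X_{x_0}^{(N)}$ and any geodesic $\gamma$ between them, we have $|(p\cdot_N q)_{x_0}-d_X(x_0,\gamma)|\le C_N$. The plan is to reduce this to the analogous fact in genuinely hyperbolic spaces, since Cordes--Hume show that $X_{x_0}^{(N)}$ is $\delta$-hyperbolic with $\delta=32N(3,0)$. The cited fact \cite[III.H. 3.18(3)]{bridson} says precisely that in a proper geodesic $\delta$-hyperbolic space, the Gromov product $(p\cdot q)_{x_0}$ of two boundary points agrees, up to an additive constant depending only on $\delta$, with the distance from the basepoint to a geodesic joining $p$ and $q$. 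So the whole content is bookkeeping: the space $X_{x_0}^{(N)}$ is hyperbolic but \emph{not} geodesic, so there may be no geodesic $\gamma$ between $p$ and $q$ lying inside $X_{x_0}^{(N)}$, and the geodesic $\gamma$ in the statement lives in the ambient geodesic space $X$. The main obstacle is therefore transporting the hyperbolic-space estimate, which is stated for geodesics internal to the hyperbolic space, to a geodesic of the ambient space $X$.

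First I would use Theorem \ref{s and *} to pass, via the embedding $j_{N,N'}$, between the sequential boundary $\partial X_{x_0}^{(N)}$ and the Morse boundary $\partial_*^N X_{x_0}$, so that the boundary points $p,q$ are realized by honest $N$-Morse geodesic rays $\alpha_p,\alpha_q$ in $X$ issuing from $x_0$. Then $\gamma$, being a geodesic between $p$ and $q$, forms together with these rays a geodesic triangle with vertices $x_0$ (at infinity replaced by the rays), $p$, $q$; by Lemma \ref{slim} this triangle is $\delta_N$-slim and all its sides are uniformly Morse, with constants depending only on $N$. The point $m$ on $\gamma$ realizing $d_X(x_0,\gamma)$ is then within $\delta_N$ of one of the rays $\alpha_p,\alpha_q$, and the slimness gives quantitative control on where the rays diverge.

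Next I would relate $d_X(x_0,\gamma)$ to the Gromov product directly. The Gromov product $(p\cdot_N q)_{x_0}$ measures, up to bounded error, how far the two rays $\alpha_p,\alpha_q$ stay close together before they separate, i.e. the ``fellow-traveling length''; the distance from $x_0$ to the connecting geodesic $\gamma$ measures essentially the same quantity. Concretely, I would take a point $z$ on $\gamma$ and use the triangle inequality together with the definition of the Gromov product to bound $(p\cdot_N q)_{x_0}$ above and below by $d_X(x_0,z)$ up to constants controlled by the Morse gauge and the slimness constant $\delta_N$. One direction follows from the fact that the Gromov product is computed as a liminf over sequences $x_n\to p$, $y_m\to q$, and $(x_n\cdot y_m)_{x_0}$ is comparable to $d_X(x_0,[x_n,y_m])$ which in turn is comparable to $d_X(x_0,\gamma)$ by stability of the Morse geodesics; the other direction uses slimness to see that the rays cannot have remained close much beyond the point of $\gamma$ nearest $x_0$.

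The step I expect to be the real obstacle is making the comparison uniform despite $X_{x_0}^{(N)}$ not being geodesic: one cannot directly invoke \cite[III.H. 3.18(3)]{bridson} because that statement requires an internal geodesic, whereas the only geodesics available are in $X$, which need not lie in $X_{x_0}^{(N)}$. I would circumvent this by working with the ambient geodesic $\gamma\subset X$ and controlling, via Lemma \ref{close M} and Lemma \ref{slim}, the Morse behaviour of the relevant sides so that the hyperbolic inequality transfers with an additive error depending only on $N$. A further subtlety is the dependence of the Gromov product on $N$: by the stated inequality $(x\cdot_N y)_{x_0}\le (x\cdot_{N'} y)_{x_0}\le (x\cdot_N y)_{x_0}+64N'(3,0)$, changing the Morse gauge only shifts the product by a constant, so absorbing the switch between $N$ and $N'$ (forced by Theorem \ref{s and *}) into $C_N$ is harmless. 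Assembling these bounds yields the desired constant $C_N$ depending only on $N$.
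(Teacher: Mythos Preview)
Your proposal is correct and follows essentially the same route the paper indicates: the paper gives no detailed proof, only the hint ``With Theorem~\ref{s and *}, it is not hard to show the following'' together with the reference to \cite[III.H.~3.18(3)]{bridson} for the hyperbolic case, and your argument is a faithful fleshing-out of that hint, using Theorem~\ref{s and *} to realize $p,q$ by genuine Morse rays, Lemma~\ref{slim} to get uniform slimness of the ideal triangle, and the comparison of $N$- and $N'$-Gromov products to absorb the gauge change into $C_N$. Your identification of the key obstacle---that $X_{x_0}^{(N)}$ is hyperbolic but not geodesic, so one must work with the ambient geodesic $\gamma\subset X$ rather than invoke the Bridson--Haefliger statement directly---is exactly the point.
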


\section{Maps on the Morse boundary}

For proper metric spaces $X, Y$, we would like to study bihölder maps, quasi-symmetries and strongly quasi-conformal maps between their Morse boundaries $\partial_*X$ and $\partial_*Y$. 
Normally, one defines these three types of maps for metric spaces. The fact that Morse boundaries are not metrizable is a problem. 
But instead, we have a collection of metric Morse boundaries if we choose a basepoint. That is, for any $N$ Morse gauge, basepoint $x_0\in X$, the boundary $\partial X^{(N)}_{x_0}$ has a visual metric. 
Here, however, we will consider homeomorphisms between Morse boundaries that, a priori, have nothing to do with the interior points. The following result makes a connection between ideal triangles and interior points. It comes from the work of Charney, Cordes, Murray. Using ideal triangles on the boundary rather than basepoints in the interior is more reasonable in our setting.

Denote by $\partial_*X^{(n, N)}$, the set of $n$-triples of distinct points $(p_1,..., p_n)$ in $\partial_*X$ such that any bi-infinite geodesic from $p_i$ to $p_j$ is $N$-Morse. For any ideal triangle $T(a, b ,c)$, where $(a, b, c)\in \partial_*X^{(3, N)}$, we can define centers of $T(a, b, c)$. The center is not unique, but the next lemma shows that they are a bounded set.
\subsection{Triangles and coarse center}
\begin{lem}\cite[Lemma 2.5]{charney2019quasi}\label{coarse}

Let $X$ be a proper geodesic metric space. Let $(a, b, c)\in \partial_{*}X^{(3, N)}$.
\\Set $E_k(a, b, c)=\{x\in X\ | \ x \mbox{ lies within } k \mbox{ of all three sides of some triangle } T(a, b, c)\}$. For any $k\ge \delta_{N}$, we have the following:

\begin{enumerate}
\item $E_k(a, b, c)$ is non-empty,
\item $E_k(a, b, c)$ has bounded diameter L depending only on $N$ and $k$.
\end{enumerate}
\end{lem}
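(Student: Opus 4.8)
The plan is to fix one geodesic representative for each side, produce a center by a connectedness argument on a single side using the uniform slimness from Lemma \ref{slim}, and then obtain the \emph{uniform} diameter bound by pushing the configuration into the auxiliary hyperbolic spaces $X_{x_0}^{(N')}$. Throughout, by Lemma \ref{slim} every triangle $T(a,b,c)$ with $(a,b,c)\in\partial_*X^{(3,N)}$ is $\delta_N$-slim and, by Lemma \ref{unif M}, each of its sides is $N'$-Morse with $N'=N'(N)$.

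For nonemptiness it suffices to treat $k=\delta_N$. Fix geodesics $\ell_{ab},\ell_{bc},\ell_{ac}$ realizing the three sides and parametrize $\ell_{ab}\colon\bbR\to X$ with $\ell_{ab}(-\infty)=a$, $\ell_{ab}(+\infty)=b$. Set
$$A=\{t\in\bbR : d(\ell_{ab}(t),\ell_{ac})\le\delta_N\},\qquad B=\{t\in\bbR : d(\ell_{ab}(t),\ell_{bc})\le\delta_N\}.$$
Both are closed, since distance to a fixed set is continuous, and $\delta_N$-slimness gives $A\cup B=\bbR$. I claim $A$ is bounded above (and symmetrically $B$ is bounded below). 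If not, there are $t_n\to+\infty$ and $y_n\in\ell_{ac}$ with $d(\ell_{ab}(t_n),y_n)\le\delta_N$; since $d(x_0,\ell_{ab}(t_n))\to\infty$ we get $d(x_0,y_n)\to\infty$, so after passing to a subsequence $y_n$ converges in $\partial_*X$ to $a$ or to $c$, while $\ell_{ab}(t_n)\to b$. Working in $X_{x_0}^{(N')}$ via Theorem \ref{s and *} and Lemma \ref{conv}, the bounded distance $d(\ell_{ab}(t_n),y_n)\le\delta_N$ forces $(\ell_{ab}(t_n)\cdot y_n)_{x_0}\to\infty$, whence $b\in\{a,c\}$ by Lemma \ref{GP}, contradicting distinctness. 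Thus $A,B$ are proper closed subsets of the connected set $\bbR$ with $A\cup B=\bbR$, so $A\cap B\ne\varnothing$, and any $t_0\in A\cap B$ gives a point $\ell_{ab}(t_0)$ within $\delta_N\le k$ of all three sides; hence $E_k(a,b,c)\ne\varnothing$.

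For the diameter bound I first reduce to a single triangle: any two $N'$-Morse geodesics sharing the same pair of endpoints in $\partial_*X$ lie at Hausdorff distance $\le d_N$ depending only on $N$ (a standard consequence of the Morse property), so replacing $k$ by $k':=k+d_N$ we may assume every point of $E_k(a,b,c)$ is within $k'$ of the sides of one fixed triangle. Given $x\in E_k(a,b,c)$, pick a nearest point $\ell_{ab}(s)$ with $d(x,\ell_{ab}(s))\le k'$; then $\ell_{ab}(s)$ lies within $2k'$ of both $\ell_{ac}$ and $\ell_{bc}$, so $s$ belongs to $S=\{s: d(\ell_{ab}(s),\ell_{ac})\le 2k'\text{ and } d(\ell_{ab}(s),\ell_{bc})\le 2k'\}$. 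For $x,x'\in E_k(a,b,c)$ projecting to parameters $s,s'$ this yields $d(x,x')\le 2k'+|s-s'|$, so it is enough to bound $\operatorname{diam}S$ by a function of $N$ and $k$.

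The crux, and the step I expect to be the main obstacle, is precisely this uniform bound on $\operatorname{diam}S$: the connectedness argument of the previous paragraphs shows $S$ is bounded for each fixed triangle, but the bound must depend only on $N$ and $k$, not on the triple. I would extract it from the Morse-gauge-controlled hyperbolicity of $X_{x_0}^{(N')}$, which is $32N'(3,0)$-hyperbolic with $N'=N'(N)$. All points $\ell_{ab}(s)$ with $s\in S$, together with the relevant comparison points on $\ell_{ac}$ and $\ell_{bc}$, lie in $X_{x_0}^{(N')}$, and the standard estimate that the pairwise fellow-travelling region of an ideal triangle in a $\delta$-hyperbolic space has diameter $O(\delta)$ then bounds $|s-s'|$ in terms of $\delta_{N'}$ and $k'$. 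Since $X_{x_0}^{(N')}$ need not be geodesic, I would phrase the estimate through Gromov products rather than geodesics, using Lemma \ref{dist-GP} to convert the distances-to-sides into Gromov products so that the four-point inequality of Lemma \ref{GP} applies; this produces the desired $L=L(N,k)$ and completes the proof.
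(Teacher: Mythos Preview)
The paper does not give its own proof of this lemma: it is quoted verbatim as \cite[Lemma 2.5]{charney2019quasi} and used as a black box. So there is nothing in the paper to compare against, and your write-up should be judged on its own.

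Your argument for (1) is essentially the standard one and is fine in outline. Two small things to clean up: you invoke a basepoint $x_0$ (and the spaces $X_{x_0}^{(N')}$) in the middle of the boundedness argument for $A$ without ever introducing it; and the step ``$y_n$ converges in $\partial_*X$ to $a$ or $c$'' deserves one more sentence, since $y_n\in X$ and you are really using that an unbounded sequence on the $N'$-Morse geodesic $\ell_{ac}$ subconverges to one of its endpoints. Neither of these is a real obstacle.

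Part (2), however, is not a proof as written. You explicitly flag the uniform bound on $\operatorname{diam}S$ as ``the step I expect to be the main obstacle'' and then describe what you \emph{would} do rather than doing it. The strategy you outline is the right one---pick $x_0$ to be the center produced in part (1), observe via Lemmas \ref{unif M} and \ref{close M} that all relevant points lie in $X_{x_0}^{(N')}$ for some $N'=N'(N,k)$, and then run the standard $\delta$-hyperbolic estimate (e.g.\ via Gromov products and Lemma \ref{GP}) in that $32N'(3,0)$-hyperbolic space---but you need to actually carry it out and track that every constant depends only on $N$ and $k$. One caution: do not appeal to Lemma \ref{cen bspt} here, since its proof in the paper uses the present lemma; you must get the inclusion into $X_{x_0}^{(N')}$ directly from Lemmas \ref{unif M} and \ref{close M}. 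Once you write out the Gromov-product computation explicitly (the four-point inequality applied to $\ell_{ab}(s),\ell_{ab}(s')$ and their nearby points on the other two sides), the bound $L=L(N,k)$ falls out and the proof is complete.
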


Set $K(a, b, c)=1+\inf\{k\ | \ E_k(a, b, c)\neq \emptyset\}$. It depends only on the vertices, not on $N$. From the above lemma we know that $K(a, b, c)$ is bounded by $1+\delta_{N}$. 

Any point in $E_{K(a, b, c)}(a, b, c)$ is called a {\it coarse center} of $(a, b, c)$. For simplicity, we will suppress the notation $E_{K(a, b, c)}(a, b, c)$ as $E(a, b, c)$.
 
The following two observations are not hard, but they are useful in the later sections. 

\begin{lem}\label{cen bspt}
For any Morse gauge $N$, there exists $N'$ such that the following holds.

Let $X$ be a proper geodesic metric space. For any $(a, b, c)\in \partial_*X^{(3, N)}$ and coarse center $x_0\in E(a, b, c)$, we have $a, b, c \in \partial X_{x_0}^{(N')}$.
\end{lem}

\begin{proof}
Let $x_0$ be a coarse center of $(a, b, c)$. By Lemma \ref{coarse}, there exists a $N$-Morse geodesic $\gamma$ from $a$ to $b$ and a point $x_1\in \gamma$  such that $d(x_0, x_1)\le 1+\delta_N$. By Lemma \ref{unif M}, the geodesic $[x_1, a)$ is $N_0$-Morse where $N_0$ depends only on $N$.
Now Lemma \ref{close M} implies that any geodesic from $x_0$ to $a$ is $N_1$-Morse for some Morse gauge $N_1$ depending only on $N_0$ and $1+\delta_N$. It means that $a\in \partial_*^{N_1}X_{x_0}$. An analogous argument proves that $b, c\in \partial_*^{N_1}X_{x_0}$. By Theorem \ref{s and *}, there exists $N'$ depending only on $N_1$ such that $a, b, c\in \partial X_{x_0}^{(N')}$.
\end{proof}

The following lemma says that if two ideal triangles are sufficiently close to each other, then their coarse center sets are uniformly bounded.
\begin{lem}\label{close tri}
Let $X$ be a proper geodesic metric space and $x_0\in X$ be a basepoint. Consider the visual metric on the $N$-Morse boundary: $(\partial X_{x_0}^{(N)}, d_{x_0, \epsilon_{N}})$. 
Choose three distinct points $a, b, c\in \partial X_{x_0}^{(N)}$. 
There exists constants $\lambda$ and $D=D(N)$ such that the following holds.

Let  $a', b', c'\in \partial X_{x_0}^{(N)}$ be three distinct points which satisfies $$d_{x_0, \epsilon_{N}}(a, a'), d_{x_0, \epsilon_{N}}(b, b'), d_{x_0, \epsilon_{N}}(c, c')\le \lambda.$$ For any $p\in E(a, b, c), p'\in E(a', b', c')$, we have $$d_X(p, p')\le D.$$

\end{lem}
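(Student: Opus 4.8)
The plan is to translate closeness in the visual metric into large Gromov products, and then use hyperbolicity (slim triangles) to confine the two coarse centers to a common bounded region. Throughout, let $N'$ be the Morse gauge, depending only on $N$, produced by Lemma \ref{slim} applied to triangles with one vertex at $x_0$: since $a,b,c,a',b',c'\in\partial X_{x_0}^{(N)}$, every bi-infinite geodesic joining two of these points is $N'$-Morse, so both $(a,b,c)$ and $(a',b',c')$ lie in $\partial_*X^{(3,N')}$, and all center-related constants ($\delta_{N'}$, the bound $K(\cdot)\le 1+\delta_{N'}$, and the diameter bound of Lemma \ref{coarse}) may be taken to depend only on $N$.

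First I would fix the triangle $(a,b,c)$ and record that $R':=\sup\{d_X(x_0,p)\mid p\in E(a,b,c)\}$ is finite, which is immediate from Lemma \ref{coarse}(2) since $E(a,b,c)$ is non-empty and bounded. Next, using the defining inequalities of the visual metric $d_{x_0,\epsilon_N}$ together with Lemma \ref{GP}, I would convert the hypotheses into lower bounds on Gromov products: $d_{x_0,\epsilon_N}(a,a')\le\lambda$ forces $(a\cdot_N a')_{x_0}\ge M(\lambda)$ with $M(\lambda)\to\infty$ as $\lambda\to 0$, and similarly for the pairs $(b,b')$ and $(c,c')$. By Lemma \ref{dist-GP} this means every point of the bi-infinite geodesic $[a,a']$ satisfies $d_X(x_0,\cdot)\ge M(\lambda)-C_N$, and likewise for $[b,b']$ and $[c,c']$; so for $\lambda$ small these three geodesics stay far from the ball in which the center of $(a,b,c)$ lives. (If $a'=a$ one simply omits the degenerate geodesic $[a,a']$ and the argument only simplifies.)

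The heart of the argument is to show that a coarse center $p\in E(a,b,c)$ is in fact close to all three sides of the fixed ideal triangle $T(a',b',c')$ with sides $[a',b'],[b',c'],[a',c']$ (all $N'$-Morse). Given $p$, choose $q\in[a,b]$ with $d_X(p,q)\le 1+\delta_{N'}$, so $d_X(x_0,q)\le R'+1+\delta_{N'}$. Splitting the ideal quadrilateral with vertices $a,a',b',b$ into two triangles and applying the $\delta_{N'}$-slimness of Lemma \ref{slim} twice, I get that $q$ lies within $\delta':=2\delta_{N'}$ of $[a,a']\cup[a',b']\cup[b',b]$. But $q$ is at bounded distance from $x_0$, whereas every point of $[a,a']$ and of $[b,b']$ is at distance $\ge M(\lambda)-C_N$ from $x_0$; choosing $\lambda$ so small that $M(\lambda)-C_N>R'+1+\delta_{N'}+\delta'$ rules out the first and third alternatives, leaving $d_X(q,[a',b'])\le\delta'$ and hence $d_X(p,[a',b'])\le 1+\delta_{N'}+\delta'$. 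The same argument applied to the quadrilaterals on $\{b,b',c',c\}$ and $\{a,a',c',c\}$ gives the analogous bounds for $[b',c']$ and $[a',c']$.

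Finally I would assemble these estimates. Setting $k_0:=\max\{1+\delta_{N'}+\delta',\,1+\delta_{N'}\}$, the three bounds show $p$ lies within $k_0$ of all three sides of $T(a',b',c')$, so $p\in E_{k_0}(a',b',c')$; and since $K(a',b',c')\le 1+\delta_{N'}\le k_0$ we also have $p'\in E(a',b',c')\subseteq E_{k_0}(a',b',c')$. Lemma \ref{coarse}(2), applied with gauge $N'$ and parameter $k_0$, bounds the diameter of $E_{k_0}(a',b',c')$ by some $D=L(N',k_0)$ depending only on $N$, whence $d_X(p,p')\le D$. I expect the main obstacle to be the third step: correctly setting up the thin-quadrilateral estimate so that the long, thin geodesics $[a,a'],[b,b']$ — which are forced away from $x_0$ precisely because the Gromov products are large — can be excluded, thereby pinning $q$ to the side $[a',b']$. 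The interplay there between Lemma \ref{dist-GP} and the choice of $\lambda$ relative to $R'$ is where the quantitative care is needed.
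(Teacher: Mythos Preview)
Your proposal is correct and follows essentially the same route as the paper: both arguments use slimness of Morse triangles (Lemma \ref{slim}) on the ideal quadrilateral $a,b,b',a'$ to push a point near $[a,b]$ to a point near $[a',b']$, after choosing $\lambda$ small enough that the connecting geodesics $[a,a']$ and $[b,b']$ are forced far away, and then conclude via the diameter bound in Lemma \ref{coarse}. The only cosmetic differences are that you make the role of the basepoint $x_0$ and Lemma \ref{dist-GP} explicit (the paper simply asserts one can choose $\lambda$ small so that $p_0$ is far from $(a,a')$ and $(b,b')$), and that you handle an arbitrary $p\in E(a,b,c)$ directly via your constant $R'$, whereas the paper works with a fixed $p_0$ and then invokes Lemma \ref{coarse} once more to pass to arbitrary $p,p'$.
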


\begin{proof}
By the Lemma \ref{slim}, it is not hard to see that any ideal triangle with vertices in $\partial X_{x_0}^{(N)}$ is an $N'$-Morse triangle, 
hence it is $\delta$-slim, where $N'$ and $\delta$ depend only on $N$.
Choose a point $p_0\in E(a, b, c)$. There exists a bi-infinite geodesic $\alpha$ from $a$ to $b$ such that $d_X(p_0, \alpha)\le 1+\delta$.
That is, we have a point $p_1\in \alpha$ with $d_X(p_0, p_1)\le 1+\delta$.
We can choose $\lambda$ sufficiently small so that the distances from $p_0$ to any geodesic $(a, a')$ and to any geodesic $(b, b')$ are larger than $\delta+1$. 
See Figure \ref {close tri}.
Since all triangles with vertices $(a, b', a')$ and $(a, b, b')$ are $\delta$-slim, for any geodesic $(a, b')$ and $(a', b')$, there exist points $p_2\in (b', a)$ and $p_3\in (a', b')$ such that $$d_X(p_1,p_2), d_X(p_2, p_3)\le \delta.$$
It follows that $d(p_0, p_3)\le 1+3\delta$, where $p_3$ is a point in a geodesic from $a'$ to $b'$. A similar argument shows that, the point $p_0\in E_{1+3\delta}(a', b', c')$. From Lemma \ref{coarse}, it is easy to find constants $L_1, L_2$ such that $d_X(p, p_0)\le L_1$ and $d_X(p_0, p')\le L_2$ for any $p\in E(a, b, c), p'\in E(a', b', c')$. The constants $L_1$ and $L_2$ depend only on $1+3\delta$ and $N'$. Taking $D=L_1+L_2$, this shows the lemma.

\end{proof}

\begin{figure}[!ht]
\labellist

\pinlabel $p_0$ at 140 110
\pinlabel $p_3$ at 72 120
\pinlabel $p_1$ at 114 126
\pinlabel $p_2$ at 95 100
\pinlabel $a$ at 55 15
\pinlabel $a'$ at 38 28
\pinlabel $b$ at 90 227
\pinlabel $b'$ at 70 220
\pinlabel $c$ at 230 102
\pinlabel $c'$ at 232 115

\pinlabel $\partial X_{x_0}^{(N)}$ at -15 130 
\pinlabel $X$ at 140 30

\endlabellist
\includegraphics[width=3.in]{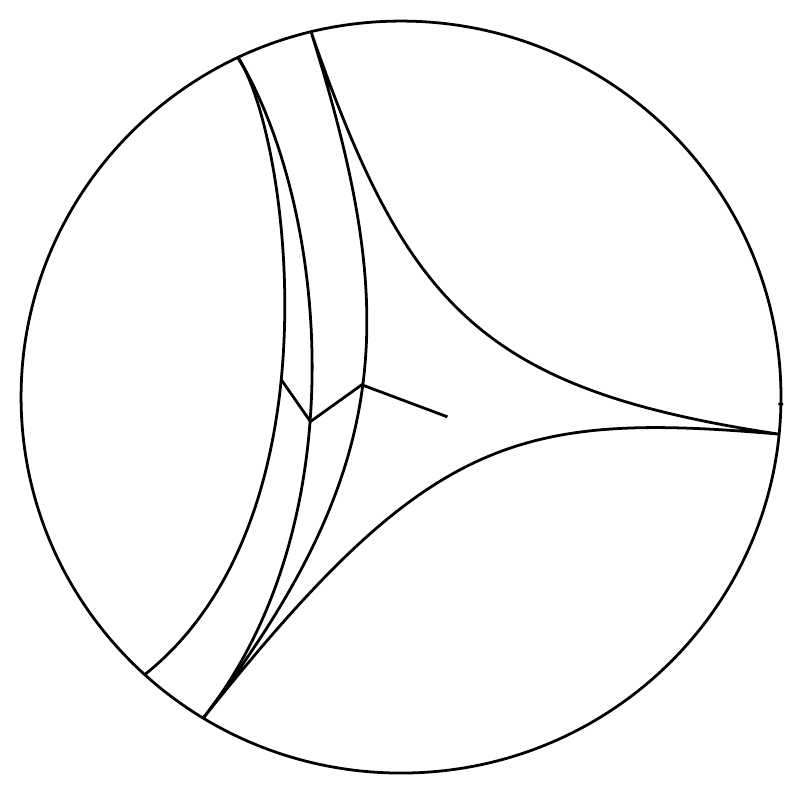}
\caption{Point $p_0$ is a coarse center of the triangle $T(a, b, c)$. Points $a'$, $b'$ and $c'$ are close to $a$, $b$ and $c$, respectively.} 
\label{close tri}
\end{figure}

\subsection{$2$-stable maps and basetriangle stable maps}

Let $X$ be a proper geodesic metric space.
Let $\partial_{*}X^{(2,N)}=\{(p, q)\ |\ p, q\in\partial_*X \mbox{ and any geodesics between } p \mbox{ and } \\
q \mbox{ are } N\mbox{-Morse} \}$.
We say a map $f: \partial_*X \to \partial_*Y$ is {\it 2-stable} if for any $N$ there exists $N'$ such that $f(\partial_{*}X^{(2, N)})\subset \partial_{*}Y^{(2, N')}$.

From the Theorem 3.17 in \cite{charney2019quasi}, every quasi-isometry between two proper geodesic metric spaces induces a $2$-stable homeomorphism between their Morse boundaries. 
This homeomorphism does not depend on a choice of basepoint.
In our setting, we focus on the maps on the Morse boundary. It does not make sense to use basepoint in the interior since the map is not defined on the interior. However, using basetriangles on the Morse boundary seems to be reasonable. Indeed, every quasi-isometry indues a {\it basetriangle stable} map between the Morse boundaries in the following sense. Later we will discuss the relationship between $2$-stable maps and basetriangle stable maps.

\begin{defn}[Basetriangle stable maps]\label{tri sta}Let $X$ and $Y$ be proper geodesic metric spaces. An embedding $f: \partial_{*}X\hookrightarrow \partial_{*}Y$ is  $N_0${\it -basetriangle stable} if for any $N$, there exists a Morse gauge $N'$ such that 
\begin{itemize}
\item for all 3-triples $(a, b, c) \in \partial_{*}X^{(3, N_0)}$, and 
\item for all $x_0\in E(a, b, c), y_0\in E(f(a),f(b),f(c)),$ 
\end{itemize}
we have $$f(\partial {X_{x_0}^{(N)}})\subset\partial {Y_{y_0}^{(N')}}.$$
We say that $f$ is {\it basetriangle stable} if it is $N_0$-basetriangle stable for every $N_0$. \end{defn}

\begin{rmk}
Under the assumptions in Definition \ref{tri sta}, for simplicity, we usually suppress the notation $f|_{\partial X_{x_0}^{(N)}}: (\partial {X_{x_0}^{(N)}}, d_{x_0, \epsilon_N})\hookrightarrow \partial ({Y_{y_0}^{(N')}}, d_{y_0, \epsilon_{N'}})$ as $f_N: \partial X^{(N)}\hookrightarrow \partial Y^{(N')}$. However, for different basetriangles and coarse centers, the embedding $f_N$ is different.
\end{rmk}

This definition of basetriangle stable map is more complicated than the notion of 2-stable maps. But the motivation comes naturally from the quasi-isometry between two proper geodesic metric spaces.
When dealing with quasi-mobius maps, one focuses on bi-infinite Morse geodesics, so it is convenient to use $2$-stable maps. In our setting, we want to study other types of maps, like quasisymmetric and quasi-conformal maps between the Morse boundaries which are not metrizable in general. 
Since for any $N$, the map $f_{N}$ is between two metric spaces $\partial X_{x_0}^{(N)}$ and $\partial Y_{y_0}^{(N')}$, the basetriangle stable condition provides a way to define these properties for maps on the Morse boundaries.
The next proposition says that, basetriangle stable maps are usually weaker than $2$-stable maps.  However, under certain conditions they are equivalent. In the following sections, we will switch between these notions. 

\begin{prop}\label{2=bstri}
Let $X$ and $Y$ be two proper geodesic metric spaces.
If $f: \partial_*X\to \partial_*Y$ is a $2$-stable embedding, then $f$ is basetriangle stable.
Conversely, if $X$ is cocompact and $f$ is $N_0$-basetriangle stable for some $N_0$, then $f$ is $2$-stable.

In particular, if $h: X\to Y$ is a quasi isometry, then the induced homeomorphism between their Morse boundaries is basetriangle stable.

\end{prop}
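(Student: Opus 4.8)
The plan is to prove the two implications separately and then deduce the ``in particular'' clause from the forward direction together with the cited fact that a quasi-isometry induces a $2$-stable homeomorphism.

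For the forward direction ($2$-stable $\Rightarrow$ basetriangle stable), I fix $N_0$ and a gauge $N$ and aim to produce a single $N'$, depending only on $N$ and $N_0$, that works uniformly over all $(a,b,c)\in\partial_*X^{(3,N_0)}$ and all centers $x_0\in E(a,b,c)$, $y_0\in E(f(a),f(b),f(c))$. First I note that, since the three pairs $(a,b),(b,c),(a,c)$ lie in $\partial_*X^{(2,N_0)}$, $2$-stability gives $(f(a),f(b),f(c))\in\partial_*Y^{(3,N_5)}$ with $N_5=N_5(N_0)$; in particular the image triangle is Morse and its coarse-center set is nonempty by Lemma \ref{coarse}. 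Now take $p\in\partial X_{x_0}^{(N)}$ and choose a vertex $v\in\{a,b,c\}$ with $v\neq p$. By Lemma \ref{cen bspt} we have $v\in\partial X_{x_0}^{(N_1)}$ with $N_1=N_1(N_0)$, so the triangle $T(x_0,p,v)$ has two $\max(N,N_1)$-Morse sides; Lemma \ref{slim} then makes the bi-infinite side $(p,v)$ an $N_2$-Morse geodesic, i.e. $(p,v)\in\partial_*X^{(2,N_2)}$ with $N_2=N_2(N,N_0)$. Applying $2$-stability once more yields $(f(p),f(v))\in\partial_*Y^{(2,N_3)}$, and Lemma \ref{cen bspt} on the $Y$-side gives $f(v)\in\partial Y_{y_0}^{(N_4)}$ with $N_4=N_4(N_0)$. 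Then the triangle $T(y_0,f(p),f(v))$ has two Morse sides and Lemma \ref{slim} forces $[y_0,f(p))$ to be $N'$-Morse with $N'=N'(N,N_0)$. Since this bound is independent of the chosen triangle and centers, $f$ is $N_0$-basetriangle stable; as $N_0$ was arbitrary, $f$ is basetriangle stable.

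For the converse I will use cocompactness to manufacture a controlled triangle near any prescribed basepoint. Let $G$ act geometrically on $X$ with $G\cdot K=X$ for a compact $K$, and fix a reference triangle $(a^*,b^*,c^*)\in\partial_*X^{(3,N_0)}$ whose coarse center $z_0$ I may assume lies in $K$ (translate it into $K$). Given $(p,q)\in\partial_*X^{(2,N)}$, let $\gamma=(p,q)$ be an $N$-Morse bi-infinite geodesic and pick $x_0\in\gamma$. Choosing $g\in G$ with $g^{-1}x_0\in K$, the translate $(a,b,c):=g\cdot(a^*,b^*,c^*)$ again lies in $\partial_*X^{(3,N_0)}$ (isometries preserve Morse gauges) and has a coarse center $x_0':=g z_0\in E(a,b,c)$ with $d_X(x_0,x_0')\le\operatorname{diam}(K)=:R$. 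Since $x_0\in\gamma$, the sub-rays $[x_0,p)$ and $[x_0,q)$ are uniformly Morse by Lemma \ref{unif M}, so via the embedding $j$ of Theorem \ref{s and *} I get $p,q\in\partial X_{x_0}^{(M_0)}$ with $M_0=M_0(N)$; a bounded basepoint shift by $R$ (slim triangles applied to $T(x_0',x_0,p)$ and $T(x_0',x_0,q)$, using that a geodesic segment of length $\le R$ is uniformly Morse) upgrades this to $p,q\in\partial X_{x_0'}^{(M_1)}$ with $M_1=M_1(N)$, because $R$ is a single constant of the action.

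Feeding this into $N_0$-basetriangle stability, applied with gauge $M_1$ to the triangle $(a,b,c)$ and centers $x_0'\in E(a,b,c)$, $y_0'\in E(f(a),f(b),f(c))$, gives $f(p),f(q)\in\partial Y_{y_0'}^{(N'')}$ with $N''=N''(N)$; a final application of Lemma \ref{slim} to $T(y_0',f(p),f(q))$ makes $(f(p),f(q))$ an $N'''$-Morse geodesic, i.e. $(f(p),f(q))\in\partial_*Y^{(2,N''')}$ with $N'''=N'''(N)$, which is exactly $2$-stability. The ``in particular'' statement is then immediate: a quasi-isometry induces a $2$-stable homeomorphism by Theorem 3.17 of \cite{charney2019quasi}, which is basetriangle stable by the forward direction. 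I expect the main obstacle to be the converse, and specifically the uniformity bookkeeping in the cocompactness step: one must guarantee that $N'''$ depends only on $N$ and not on the pair $(p,q)$, the chosen $x_0\in\gamma$, or the translate $(a,b,c)$, which hinges on $\operatorname{diam}(K)$ being a fixed constant of the action and on the uniformity built into Definition \ref{tri sta}. A secondary point to handle with care is that $E(f(a),f(b),f(c))$ must be nonempty for the quantifier over $y_0'$ to have content; I will treat this as part of the basetriangle-stable hypothesis (the definition quantifies over $y_0'\in E(f(a),f(b),f(c))$, which presupposes the image triangle is Morse), as is automatic in the forward direction and for maps induced by quasi-isometries.
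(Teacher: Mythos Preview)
Your proposal is correct and follows essentially the same approach as the paper: in the forward direction you pair $p$ with a vertex of the basetriangle, use Lemma~\ref{slim} to make the bi-infinite side Morse, push through $2$-stability, and close up with Lemma~\ref{cen bspt} and Lemma~\ref{slim} on the $Y$-side (the paper simply uses the vertex $c$ rather than choosing $v\neq p$, but your care there is harmless); in the converse you translate a fixed $N_0$-Morse triangle near a point on $(p,q)$ via cocompactness, shift the basepoint by a bounded amount, and invoke the basetriangle-stable hypothesis exactly as the paper does with Lemma~\ref{close M}. Your flagged concern about nonemptiness of $E(f(a),f(b),f(c))$ is a point the paper also leaves implicit.
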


\begin{proof}
First, we assume that $f:\partial_*X\to \partial_*Y$ is a $2$-stable embedding. Given any $N_0$, there exists $N_1$ depending on $f$ and $N_0$ so that $f(\partial_*X^{(3, N_0)})\subset\partial_*Y^{(3, N_1)}$.  For any $N$, we are going to find $N'$ such that the following holds.
For any $(a, b, c)\in\partial_*X^{(3, N_0)}, x_0\in E(a, b, c) \mbox{ and  } y_0\in E(f(a), f(b), f(c))$, we have
$$f(\partial X_{x_0}^{(N)})\subset \partial Y_{y_0}^{(N')}.$$
By Lemma \ref{cen bspt}, the geodesics 
$$[x_0, a) \mbox{ is } N_0' \mbox{-Morse and } [y_0, f(a))  \mbox{ is } N_1'\mbox{-Morse}.$$

\begin{figure}[!ht]
\labellist

\pinlabel $f$ at 101 42
\pinlabel $x_0$ at 40 35
\pinlabel $y_0$ at  165 35
\pinlabel $a$ at 16 9
\pinlabel $f(a)$ at 141 7
\pinlabel $b$ at 74 35
\pinlabel $f(b)$ at 203 35
\pinlabel $c$ at 25 72
\pinlabel $f(c)$ at 150 73
\pinlabel $p$ at 53 72
\pinlabel $f(p)$ at 180 73
\pinlabel $\partial_*Y$ at 165 0
\pinlabel $\partial_*X$ at 40 0
\pinlabel $X$ at 50 16
\pinlabel $Y$ at 175 17

\endlabellist
\includegraphics[width=6in]{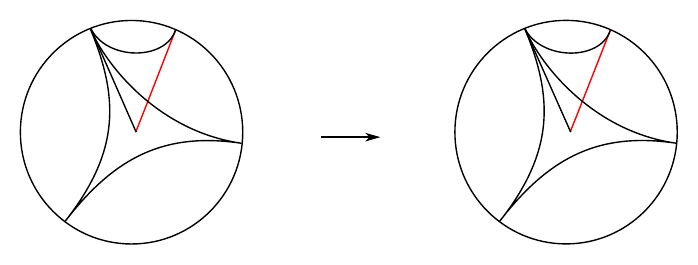}
\caption{$2$-stable embedding is basetriangle stable.} 
\label{stable 1}
\end{figure}

Let $p\in \partial X_{x_0}^{(N)}$. 
Consider triangles $T(x_0, c, p)$ and $T(y_0, f(c), f(p))$ as in Figure \ref{stable 1}. The geodesic $(c, p)$ is $N_2$-Morse by Lemma \ref{slim}. Since $f$ is $2$-stable, the geodesic $(f(c), f(p))$ is $N_3$-Morse. Lemma \ref{slim} implies that the geodesic $[y_0, f(p))$ is $N'$-Morse. Hence, $f(p)\in \partial Y_{y_0}^{(N')}$. Note that all Morse gauges $N_1, N_0', N_1', N_2$ and $N_3$ depend on $f$, $N$ and $N_0$ only. So the same holds for $N'$. Thus $f$ is $N_0$-basetriangle stable for every $N_0$.

Now assume that $X$ is cocompact, and $f$ is $N_0$-basetriangle stable. Fix a basetriangle $(a, b, c)\in\partial_*X^{(3, N_0)}$, $x_0\in E(a, b, c)$. For any $N$ and $(p, q)\in\partial_*X^{(2, N)}$. 
It is enough to show that the geodesic between $f(p)$ and $f(q)$ is $N'$-Morse where $N'$ depends only on $N$ and $f$. 
As in Figure \ref{stable 2},
choose a point $o$ in the geodesic $(p, q)$. Lemma \ref{unif M} tells us that $[o, p)$ and $[o, q)$ are $N'$-Morse where $N'$ depends on $N$.
Since $X$ is cocompact, there exists a group $G$ acting on $X$ isometrically and there exist $R>0$ and $g\in G$ such that $d_X(g(x_0), o)\le R.$

\begin{figure}[!ht]
\labellist

\pinlabel $p$ at 89 272
\pinlabel $q$ at 277 155
\pinlabel $a$ at 17 67
\pinlabel $g(a)$ at -10 141
\pinlabel $b$ at 48 32
\pinlabel $g(b)$ at 200 275
\pinlabel $g(c)$ at 175 1
\pinlabel $c$ at 88 7
\pinlabel $g(x_0)$ at 130 130
\pinlabel $o$ at 150 181
\pinlabel $\partial_*X$ at 250 35
\pinlabel $X$ at 220 70

\endlabellist
\includegraphics[width=2.7in]{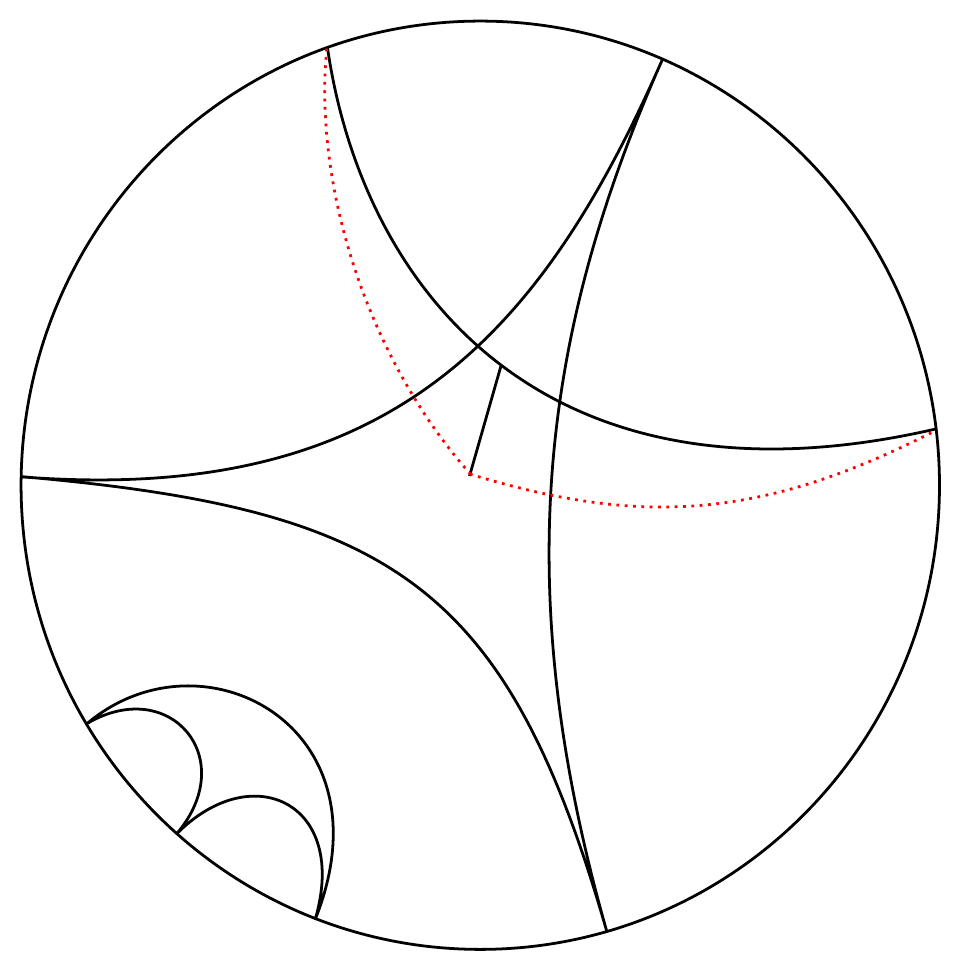}
\caption{Geodesic $(p, q)$ is $N$-Morse. The distance $d(o, g(x_0))\le R$. } 
\label{stable 2}
\end{figure}

By Lemma \ref{close M} we have the geodesics $[g(x_0), p)$ and $[g(x_0), q)$ are $N_1$-Morse, where $N_1$ depends on $N'$ and $R$. That is,  
$$p, q\in \partial X_{g(x_0)}^{N_1} \mbox{ and } g(x_0)\in E(g(a), g(b), g(c)).$$
By Definition \ref{tri sta}, 
there exists Morse gauge $N_1'$ depending on $f$ and $N_1$ such that for any point $y_0\in E(f(g(a)), f(g(b)), f(g(c)))$, we have 
$$f(p), f(q)\in \partial Y_{y_0}^{N_1'}.$$ 

Now consider the triangle $T(y_0, f(p), f(q))$.
By Lemma \ref{slim} again, there exists $N'$ depending on $N_1'$ so that $(f(p), f(q))\in\partial_*Y^{(2, N')}$. The Morse gauge $N'$ depends only on $N$, $R$ and $f$. Thus $f$ is $2$-stable.
 
\end{proof}

The condition cocompactness is important in the converse part.
An $N_0$-basetriangle stable homeomorphism $f$ does not guarantee that $f$ is $2$-stable. See the following examples.

\begin{eg}
Let $X_0$ be a Euclidean plane $\bbR^2$. 
Choose two points $(\epsilon, 0)$ and $(-\epsilon, 0)$ for some constant $\epsilon$.
At points $(m, 0)\in \bbZ^{2}\subset \bbR^{2}$, a vertical ray $r_m$ is attached. At the point $(\epsilon, 0)$ (resp.$(-\epsilon, 0)$), a vertical ray $r'$ (resp, $r{''}$) is attached.  
Let $X$ be the space $X_0$ with these rays attached. The Morse boundary of $X$ is the discrete set of the vertical rays. 

The contracting property is easier to use than the Morse property in this case since $X$ is a $CAT(0)$ space. We can choose $\epsilon$ sufficiently small so that there are exactly three $2\epsilon$-contracting bi-infinite geodesics in $X$. They are $(r_0, r')$, $(r_0, r'')$ and $(r', r'')$. See Figure \ref{eg}. The constant $\epsilon$ is $\frac{1}{4}$.

\begin{figure}[!ht]
\labellist

\pinlabel $\partial_*X$ at 10 105
\pinlabel $X$ at 10 42
\pinlabel $X_0$ at 90 22
\pinlabel $r_0$ at 150 104
\pinlabel $r_1$ at 175.5 104
\pinlabel $r_2$ at 201 104
\pinlabel $r_3$ at 226.5 104
\pinlabel $r_4$ at 252 104
\pinlabel $r_{-1}$ at 124.5 104
\pinlabel $r_{-2}$ at 99 104
\pinlabel $r_{-3}$ at 75.5 104
\pinlabel $r_{-4}$ at 48 104
\pinlabel $r'$ at 162 102
\pinlabel $r''$ at 139 102
\pinlabel $E(r_0,r',r'')$ at 150 22

\endlabellist
\includegraphics[width=6in]{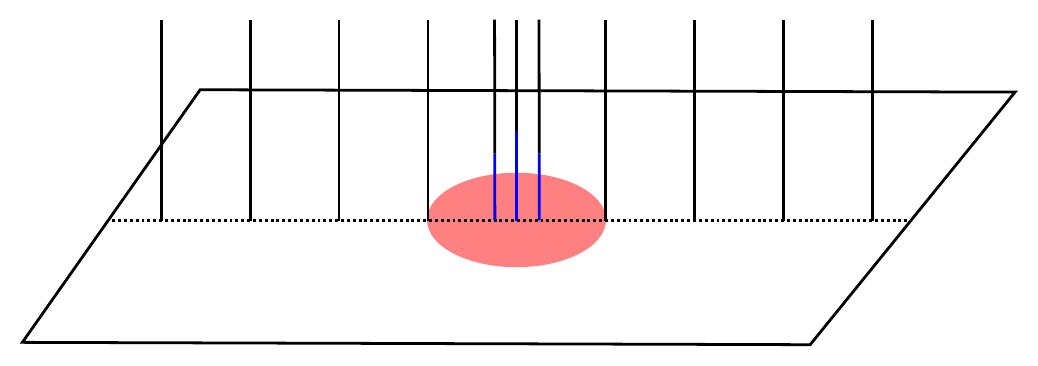}
\caption{$f$ is $\frac{1}{2}$-basetriangle stable but is not $2$-stable.} 
\label{eg}
\end{figure}

Construct a homeomorphism $f:\partial_*X\to \partial_*X$ as follows. $f(r_{2n})=r_{-2n}$ for any $n\in Z$ and $f$ fixes all other points in $\partial_*X$.
For any $n$, consider the geodesic $(r_{2n}, r_{2n+1})$ which is $1$-contracting, but the geodesic $(f(r_{2n}), f(r_{2n+1}))=(r_{-2n}, r_{2n+1})$ is $4n+1$-contracting. This implies $f$ is not a $2$-stable map. 

But we will see that $f$ is a $\frac{1}{2}$-basetriangle stable map. Note that the coarse center set $E(r_0, r', r'')$ is a closed unit ball centered at point $(0, 0)$ in $X$. More precisely, it is the union of three blue segments in the figure and the closed red unit ball in $X_0$ centered at $(0, 0)$. The map $f$ fixes the triangle $T(r_0, r', r'')$. For any nonnegative constant $C$ and $x_0, y_0\in E(r_0, r', r'')$, we have $f(\partial X_{x_0}^{(C)})\subset \partial X_{y_0}^{(C+1)}$. This means that $f$ is a $\frac{1}{2}$-basetriangle stable map. But for any constant $C\ge 1$, this map $f$ is not $C$-basetriangle stable.
\end{eg}

If we choose the points in the real line carefully, we have the following example. A basetriangle stable homeomorphism $f$ does not guarantee that $f$ is $2$-stable. 

\begin{eg}We can change the points in the above example. 
Let $X_0$ be a Euclidean plane $\bbR^2$. 
For any $m\in \bbZ_{\ge 0}$, at every point $(\frac{1}{2}m(m+1), 0)\in \bbZ^{2}\subset \bbR^{2}$, a vertical ray $r_m$ is attached. 
For any $m\in \bbZ_{\le 0}$, at points $(\frac{1}{2}m(1-m), 0)\in \bbZ^{2}\subset \bbR^{2}$, a vertical ray $r_m$ is attached. 
The space $X$ is $X_0$ together with these rays. As in the above example, the Morse boundary is the discrete set of the vertical rays.

Construct a homeomorphism $f:\partial_*X\to \partial_*X$ as follows. $f(r_{2n})=r_{-2n}$ for any $n\in Z$ and $f$ fixes all other points in $\partial_*X$. By a similar argument, the map $f$ is not a $2$-stable map. But it is basetriangle-stable. The reason is the following.

Given any positive constant $C$, there are only finitely many $C$-contracting bi-infinite geodesics in $X$. It follows that the union of the coarse center sets of all $(a, b, c)\in \partial_*X^{(3, C)}$ has bounded diameter. An analogous argument shows that the map $f$ is $C$-basetriangle stable. Thus, it is basetriangle stable and not $2$-stable.

\end{eg}

\subsection{Bihölder maps, quasisymmetries and strongly quasi-conformal maps}
Now we are ready to define bihölder maps, quasisymmetries and strongly quasi-conformal maps on the Morse boundaries. The motivations for these definitions come naturally from the fact that every quasi-isometry between two proper geodesic metric spaces indues a bihölder map, a quasisymmetry and a strongly quasi-conformal map on their Morse boundaries. This is shown at the end of this section.
\begin{defn}[Bihölder maps with respect to basepoints and metrics]\label{bihölder 1}
Let $X, Y$ be two Gromov hyperbolic spaces. Let $x_0\in X$, $y_0\in Y$ be  basepoints. Let $d_{x_0, \epsilon_X}$ and $d_{y_0, \epsilon_Y}$ be two metrics on the boundaries.
A map $f: (\partial_*X_{x_0}, d_{x_0, \epsilon_X})\to (\partial_*Y_{y_0}, d_{y_0, \epsilon_Y})$ is {\it bihölder} with respect to these metrics and basepoints, if there exist positive constants $C\ge 1, \alpha_1$, and $\alpha_2$ such that for all $x_1, x_2\in \partial_MX_{x_0}^{(N)}$, 
$$\frac{1}{C}d_{x_0, \epsilon_X}(x_1, x_2)^{\frac{1}{\alpha_1}}\le d_{y_0, \epsilon_{Y}}(f(x_1),f(x_2))^{\alpha_2}\le Cd_{x_0, \epsilon_{X}}(x_1, x_2)^{\alpha_1}.$$ 
We call $C, \alpha_1$ and $\alpha_2$ {\it bihölder constants} of $f$.
\end{defn}

\begin{rmk}
Bihölder maps can be considered between any two metric spaces. The constant $\alpha_2$ is usually taken to be $1$.
In our setting, the metric $d_{x_0, \epsilon}$ on the Gromov boundaries has a parameter $\epsilon$. 
From Remark \ref{B-eq}, for different $\epsilon$ and $\epsilon'$, the metrics $d_{x_0, \epsilon}$ and $d_{x_0, \epsilon'}$ are B-equivalent.
For this reason, we need a positive power $\alpha_2$ in the above definition.
\end{rmk}

Now let us give the definition of bihölder maps between Morse boundaries. We use notations as in the Definition \ref{tri sta}.

\begin{defn} [Bihölder maps between Morse boundaries]\label{bihölder 2}
Let $X$ and $Y$ be two proper geodesic metric spaces.  An $N_0$-basetriangle stable map $f: \partial_{*}X\to\partial_{*}Y$ is  {\it $(N_0, N)$-bihölder} if 
\begin{itemize}
\item $f_N: \partial_*{X^{(N)}}\hookrightarrow \partial_*{Y^{(N')}}$ is bihölder for all basetriangles $(a, b, c) \in \partial_{*}X^{(3, N_0)}$ and 
\item the bihölder constants depend only on $f$, $N$ and $N_0$.
\end{itemize}
We say that $f$ is {\it bihölder} if it is $(N_0, N)$-bihölder for every $N$ and every $N_0$.
\end{defn}

\begin{rmk}\label{bihölder}
In the case of hyperbolic spaces, a bihölder map $f$ between Gromov boundaries in the Definition  \ref{bihölder 1} is different from a bihölder map in the Definition \ref{bihölder 2}. The former one is defined in terms of the fixed basepoints and metrics. But the latter one is considering all basetriangles and the relevant constants does not depend on the choice of basetriangle.
\end{rmk}

Now let us give the definition of a quasisymmetric map. 
\begin{defn}
Let $(X_1, d_1), (X_2, d_2)$ be two metric spaces. A homeomorphism $f: X_1 \to X_2$ is said to be {\it quasisymmetric} if there exists an increasing homeomorphism $\psi_{f}: (0, \infty)\to (0, \infty)$ such that for any three distinct points $a, b, c\in X_1$ we have 
$$\frac{d_2(f(a), f(b))}{d_2(f(a), f(c))}\le \psi_{f}\left(\frac{d_1(a, b)}{d_1(a, c)}\right).$$
\end{defn}

Let $\alpha>0, \lambda \ge 1$.
The quasisymmetry $f$ is called a {\it power quasisymmetry} if the homeomorphism $\psi$ is given by the following

$$\psi(t)=\psi_{\alpha, \lambda}(t) = \left\{\begin{array}{l}\lambda t^{\frac{1}{\alpha}} \mbox{ if } 0<t<1\\ \lambda t^{\alpha} \mbox{ if } t\ge1\end{array}\right.$$

\begin{defn} [Quasisymmetries on Morse boundaries]
Let $X$ and $Y$ be proper geodesic metric spaces.  
An $N_0$-basetriangle stable homeomorphism $f: \partial_{*}X\to\partial_{*}Y$ is  {\it $(N_0, N)$-quasisymmetric } if 
\begin{itemize}
\item $f_N: \partial_*{X^{(N)}}\hookrightarrow \partial_*{Y^{(N')}}$ is quasisymmetric onto its image for all  basetriangles \\$(a, b, c) \in \partial_{*}X^{(3, N_0)}$ and 
\item the homeomorphism $\psi_{f_N}$ depend only on $f$, $N$ and $N_0$.
\end{itemize}
We say that $f$ is {\it quasisymmetric} if it is $(N_0, N)$-quasisymmetric for every $N$ and $N_0$. 
\end{defn}

Recall that in a metric space $(X, d)$, an $r$-{\it annulus} $A$ is defined by $$A=A(x_0, a, ar)=\{x\in X\ | \ a\le d(x, x_0)\le ar\},$$ where $r\ge 1$, $a> 0$ and $x_0$ is the center of the annulus. 
Sometimes we use $A(x_0, r)$ to emphasize the center and the ratio of radii of two concentric spheres.

P. Pansu gave the following notion of a quasi-conformal map which says that a quasi-conformal map does not distort annuli too much.

 \begin{defn}
 A map $f$ between two metric spaces $(X_1, d_1)$ and $(X_2, d_2)$ is said to be quasi-conformal if there exists a function $\phi_{f} : [ 1, \infty )\to [1, \infty )$ such that $f$ maps every $r$-annulus of $X_1$ into some $\phi_f(r)$-annulus of $X_2$.
 \end{defn}

In our setting, we will use a slightly different notion which we call {\it strongly quasi-conformal } maps, which take account of the center of the annulus.  

\begin{defn}
 A map $f$ between two metric spaces $(X_1, d_1)$ and $(X_2, d_2)$ is said to be {\it strongly quasi-conformal} if there exists a function $\phi_f : [ 1, \infty )\to [1, \infty )$ such that $f$ maps every $r$-annulus $A(x, r)$ of $X_1$ into some $\phi_f(r)$-annulus $A(f(x),\phi_f(r))$ of $X_2$.
 \end{defn}

\begin{defn} [Strongly quasi-conformal maps between Morse boundaries]
Let $X$ and $Y$ be two proper geodesic metric spaces.  
An $N_0$-basetriangle stable map $f: \partial_{*}X\to\partial_{*}Y$ is  {\it $(N_0, N)$-strongly quasi-conformal } if 
\begin{itemize}
\item $f_N: \partial_*{X^{(N)}}\to \partial_*{Y^{(N')}}$ is strongly quasi-conformal for all basetriangles \\$(a, b, c) \in \partial_{*}X^{(3, N_0)}$ and 
\item the function $\phi_{f_N}$ depends only on $f$, $N$ and $N_0$.
\end{itemize}
We say that $f$ is {\it strongly quasi-conformal} if it is $(N_0, N)$-strongly quasi-conformal for every $N$ and every $N_0$.
\end{defn}

 It is not hard to check the following.
Let $X, Y, Z$ be proper geodesic metric spaces. If $f: \partial_*X\to \partial_*Y$ and $g: \partial_*Y\to \partial_*Z$ are quasisymmetric (resp. bih\"older, strongly quasi-conformal), then $g\circ f: \partial_*X\to \partial_*Z$ is quasisymmetric (resp. bih\"older, strongly quasi-conformal).

It is helpful to know what happens to $N$-Morse Gromov products under quasi-isometries.
In the case of hyperbolic spaces, we have the following result which will be generalized to our case.
\begin{prop}
Given $K\ge 1, C, \delta\ge 0$, there is a constant $A=A(K, C, \delta)$ such that the following holds.

Let $X, Y$ be two geodesic $\delta$-hyperbolic metric spaces and let $h: X\to Y$ be a $(K, C)$-quasi-isometry. If $x_0, x_1, x_2\in X$, then 
$$K^{-1}(x_1\cdot x_2)_{x_0}-A\le (h(x_1) \cdot h(x_2))_{h(x_0)} \le K(x_1\cdot x_2)_{x_0}+A$$
\end{prop}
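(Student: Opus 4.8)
The plan is to reduce the statement about Gromov products to a comparison of distances from the basepoint to geodesics, where the quasi-isometry inequalities can be applied cleanly. The key input is the standard geometric interpretation of the Gromov product in a $\delta$-hyperbolic space: for any $x_0,x_1,x_2$ and any geodesic $[x_1,x_2]$ one has $|(x_1\cdot x_2)_{x_0}-d_X(x_0,[x_1,x_2])|\le 4\delta$, which is the hyperbolic analogue of Lemma \ref{dist-GP} (see \cite[III.H.3.18(3)]{bridson}). It is worth noting why a direct approach fails: substituting the quasi-isometry inequalities into $(x_1\cdot x_2)_{x_0}=\frac{1}{2}(d(x_0,x_1)+d(x_0,x_2)-d(x_1,x_2))$ distorts the two positive terms by a factor of $K$ and the negative term by $K^{-1}$, and these do not recombine into the asymmetric bound $K^{\pm 1}(x_1\cdot x_2)_{x_0}\pm A$. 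The geometric reformulation is what makes the asymmetry come out correctly, so establishing it first is essential.

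With this interpretation, it suffices to compare $d_X(x_0,[x_1,x_2])$ with $d_Y(h(x_0),[h(x_1),h(x_2)])$. First I would invoke stability of quasi-geodesics: since $h$ is a $(K,C)$-quasi-isometry and $[x_1,x_2]$ is a geodesic, the image $h([x_1,x_2])$ is a $(K,C)$-quasi-geodesic in $Y$ from $h(x_1)$ to $h(x_2)$, so by the Morse lemma in $\delta$-hyperbolic spaces there is a constant $R=R(K,C,\delta)$ with $d_{\mathcal H}\bigl(h([x_1,x_2]),[h(x_1),h(x_2)]\bigr)\le R$ for any geodesic $[h(x_1),h(x_2)]$.

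Next I would carry out the two-sided distance estimate. For the upper bound, take $p\in[x_1,x_2]$ realizing (up to a negligible error) $d_X(x_0,[x_1,x_2])$; then $h(p)$ lies within $R$ of $[h(x_1),h(x_2)]$ and $d_Y(h(x_0),h(p))\le K\,d_X(x_0,p)+C$, so
$$d_Y(h(x_0),[h(x_1),h(x_2)])\le K\,d_X(x_0,[x_1,x_2])+C+R.$$
For the lower bound, take $q\in[h(x_1),h(x_2)]$ realizing $d_Y(h(x_0),[h(x_1),h(x_2)])$; the Hausdorff bound gives $p'\in[x_1,x_2]$ with $d_Y(h(p'),q)\le R$, and then $K^{-1}d_X(x_0,p')-C\le d_Y(h(x_0),h(p'))\le d_Y(h(x_0),[h(x_1),h(x_2)])+R$, whence
$$d_Y(h(x_0),[h(x_1),h(x_2)])\ge K^{-1}d_X(x_0,[x_1,x_2])-C-R.$$

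Finally I would combine these two inequalities with the geometric interpretation of the Gromov product in both $X$ and $Y$, converting each distance-to-geodesic back into the corresponding Gromov product at the cost of an additive $4\delta$ on each side. Collecting all additive errors ($4\delta$ in $X$, $4\delta$ in $Y$, together with $C$ and $R$) into a single constant $A=A(K,C,\delta)$ yields the claimed bound $K^{-1}(x_1\cdot x_2)_{x_0}-A\le(h(x_1)\cdot h(x_2))_{h(x_0)}\le K(x_1\cdot x_2)_{x_0}+A$. Since the statement involves only interior points, no extension of the Gromov product to the boundary is required. The main obstacle is producing the stability constant $R$ from the Morse lemma and carefully tracking which estimates acquire the factor $K$ versus $K^{-1}$; both are routine once the problem is phrased through distances to geodesics.
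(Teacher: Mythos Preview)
Your argument is correct and is the standard proof of this classical fact. Note, however, that the paper does not actually prove this proposition: it is stated without proof as a known result for hyperbolic spaces (immediately preceding the Morse-boundary generalization, Proposition~\ref{QIGP}, which is quoted from \cite{cordes2017stability}). So there is nothing in the paper to compare against beyond observing that your proof supplies exactly the missing details, via the geometric interpretation of the Gromov product (the hyperbolic precursor of Lemma~\ref{dist-GP}) together with the Morse lemma.
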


The next proposition is a special case of Proposition 3.18 in \cite{cordes2017stability}. 
\begin{prop}\label{QIGP}
Given a Morse gauge $N$ and constants $K\ge 1, C\ge 0$, there exist a constant $A=A(N, K, C)$ and a Morse gauge $N'$ such that the following holds. 

Let $X, Y$ be two proper geodesic metric spaces and let $h: X\to Y$ be a $(K, C)$-quasi-isometry, then $h$ induces an embedding $\partial_*h: \partial X_{x_0}^{(N)}\to \partial Y_{h(x_0)}^{(N')}$. For any $x_1, x_2 \in X_{x_0}^{(N)}$, we have
$$K^{-1}(x_1\cdot x_2)_{x_0}-A\le (h(x_1) \cdot h(x_2))_{h(x_0)} \le K(x_1\cdot x_2)_{x_0}+A.$$

In particular, there exists a constant $A'=A'(N, K, C)$ such that for any $x_1, x_2 \in \partial X_{x_0}^{(N)}$, we have 
$$K^{-1}(x_1\cdot_{N} x_2)_{x_0}-A'\le (\partial_*h(x_1) \cdot_{N'} \partial_*h(x_2))_{h(x_0)} \le K(x_1\cdot_{N} x_2)_{x_0}+A'.$$
\end{prop}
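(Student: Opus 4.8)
The plan is to route everything through the coarse identity between the Gromov product and the distance from the basepoint to a geodesic, which behaves well under quasi-isometries, instead of substituting the quasi-isometry bounds directly into the three-term formula for the Gromov product.

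First I would construct the embedding $\partial_* h$. The essential input is the standard fact that a $(K,C)$-quasi-isometry carries an $N$-Morse geodesic to a $(K,C)$-quasi-geodesic lying uniformly close to an $N'$-Morse geodesic, with $N'=N'(N,K,C)$. Combined with Lemma~\ref{slim} and Lemma~\ref{close M}, this shows that $h$ maps $X_{x_0}^{(N)}$ coarsely into $Y_{h(x_0)}^{(N')}$, so a sequence converging at infinity in $X_{x_0}^{(N)}$ is sent to one converging at infinity in $Y_{h(x_0)}^{(N')}$; this defines $\partial_* h$ on $\partial X_{x_0}^{(N)}$, and injectivity together with the embedding property follow as in Theorem~\ref{s and *}, using properness.

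Next, the interior inequality. For $x_1,x_2\in X_{x_0}^{(N)}$ let $\gamma=[x_1,x_2]$ be an ambient geodesic; by Lemma~\ref{slim} the triangle $T(x_0,x_1,x_2)$ is $\delta_N$-slim and $\gamma$ is uniformly Morse, so the standard hyperbolic estimate gives $(x_1\cdot x_2)_{x_0}=d_X(x_0,\gamma)+O(\delta_N)$. I then chain three coarse estimates: (i) for any subset $S\subset X$ one has $K^{-1}d_X(x_0,S)-C\le d_Y(h(x_0),h(S))\le K\,d_X(x_0,S)+C$, which I apply with $S=\gamma$; (ii) $h(\gamma)$ is a $(K,C)$-quasi-geodesic joining $h(x_1)$ to $h(x_2)$ and the geodesic $[h(x_1),h(x_2)]$ is $N'$-Morse, so the Morse property forces the two to be at Hausdorff distance at most $R=R(N,K,C)$, whence $d_Y(h(x_0),h(\gamma))$ and $d_Y(h(x_0),[h(x_1),h(x_2)])$ differ by at most $R$; and (iii) the same hyperbolic estimate, now in the $32N'(3,0)$-hyperbolic space $Y_{h(x_0)}^{(N')}$, gives $d_Y(h(x_0),[h(x_1),h(x_2)])=(h(x_1)\cdot h(x_2))_{h(x_0)}+O(\delta_{N'})$. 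Concatenating these with the estimate for $X$ produces the asserted bound, with multiplicative factor exactly $K$ and one additive constant $A=A(N,K,C)$.

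Finally, for boundary points $x_1,x_2\in\partial X_{x_0}^{(N)}$ I would pass to the limit. Choosing sequences in the interior that realize the suprema defining $(x_1\cdot_N x_2)_{x_0}$ and $(\partial_* h(x_1)\cdot_{N'}\partial_* h(x_2))_{h(x_0)}$, I apply the interior inequality along them and take $\liminf$; Lemma~\ref{GP}(2) absorbs the $2\delta$-type gaps between each boundary product and the $\liminf$ of the corresponding interior products, and the comparison $(x\cdot_N y)_{x_0}\le(x\cdot_{N'} y)_{x_0}\le(x\cdot_N y)_{x_0}+64N'(3,0)$ lets me interchange the $N$- and $N'$-Gromov products. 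Collecting the additive errors gives $A'=A'(N,K,C)$. The main obstacle is the uniformity in step~(ii): one must guarantee that the gauge of $[h(x_1),h(x_2)]$ and the fellow-traveling radius $R$ depend only on $N,K,C$ and not on the chosen points, which is exactly what keeps $A$ uniform. This uniform control is where the Morse hypothesis does the work, and it is the reason the naive substitution into $\tfrac12(d_X(x_0,x_1)+d_X(x_0,x_2)-d_X(x_1,x_2))$ fails: that route scales $d_X(x_1,x_2)$ by $K^{-1}$ while scaling $d_X(x_0,x_1),d_X(x_0,x_2)$ by $K$, destroying the linear form.
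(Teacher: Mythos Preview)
The paper does not actually prove this proposition: it is stated as a special case of Proposition~3.18 in \cite{cordes2017stability} and left at that. So there is no in-paper argument to compare against; what you have written is a self-contained proof where the paper simply cites.

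Your argument is sound. The key idea---replacing the Gromov product by $d(x_0,[x_1,x_2])$, using that $h$ distorts this distance by the factor $K$ and an additive constant, and then using the Morse property of $[h(x_1),h(x_2)]$ to pass from $h([x_1,x_2])$ to the geodesic---is exactly the right way to get the multiplicative constant to come out as $K$ rather than $3K$, and your closing remark about why the naive substitution fails is on point. Two small comments: (i) the estimate $(x_1\cdot x_2)_{x_0}=d_X(x_0,[x_1,x_2])+O(\delta_N)$ is usually stated for hyperbolic spaces, so you should make explicit that it holds for any individual $\delta$-slim triangle in any geodesic space (slimness bounds the insize, hence the distance from $x_0$ to the internal point on $[x_1,x_2]$); Lemma~\ref{slim} supplies the slimness you need. (ii) In the boundary step, the comparison between the $N$- and $N'$-Gromov products is not really needed: your sequences $h(x_n)$ already lie in $Y_{h(x_0)}^{(N')}$, so you are computing the $N'$-product directly, and Lemma~\ref{GP}(2) applied to the $32N(3,0)$- and $32N'(3,0)$-hyperbolic spaces $X_{x_0}^{(N)}$ and $Y_{h(x_0)}^{(N')}$ handles the passage to the limit on each side.
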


Note that the above proposition used the Gromov product with respect to the basepoints $x_0$ and $h(x_0)$. 
But we would like to use basetriangles in the Morse boundary instead of basepoints in the interior. 
The following basic proposition describes a connection between the basepoint and the coarse center of a basetriangle.
Fix a basetriangle $T$ on the Morse boundary $\partial_*X$, and choose a coarse center of $T$ as a basepoint in $X$. The next proposition tells us that,  under a quasi-isometry $h$, the image of this basepoint is not too far away from coarse centers of the image of this basetriangle $T$.
 
\begin{prop}\label{QI-bstri}
Let $N_0$ be a Morse gauge. For any constants $K\ge 1, C\ge 0$, there exists a constant $A=A(N_0, K, C)$ such that the following holds.

Let $X, Y$ be two proper geodesic metric spaces and let $h: X\to Y$ be a $(K, C)$-quasi isometry. Then for any $$(a, b, c)\in \partial_{*}X^{(3, N_0)}, x_0 \in E(a, b, c) \mbox{ and }
 y_0 \in E(\partial_*h(a), \partial_*h(b), \partial_*h(c)),$$ we have
$d_Y(h(x_0), y_0)\le A$.
\end{prop}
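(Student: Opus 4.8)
The plan is to translate the geometric condition ``$x_0$ is a coarse center of $(a,b,c)$'' into a statement about Gromov products, push that statement through $h$ via Proposition \ref{QIGP}, and translate back. The key dictionary is Lemma \ref{dist-GP}: for boundary points $p,q\in\partial X_{x_0}^{(N)}$, the $N$-Gromov product $(p\cdot_N q)_{x_0}$ agrees, up to an additive constant $C_N$, with the distance $d_X(x_0,\gamma)$ from $x_0$ to a geodesic $\gamma$ joining $p$ and $q$. Thus being a (coarse) center is equivalent to all three pairwise Gromov products being bounded.

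First I would fix $x_0\in E(a,b,c)$. By Lemma \ref{cen bspt} there is a Morse gauge $N$, depending only on $N_0$, with $a,b,c\in\partial X_{x_0}^{(N)}$. Since $x_0$ lies within $K(a,b,c)\le 1+\delta_{N_0}$ of each side of some ideal triangle $T(a,b,c)$, the distances $d_X(x_0,(a,b))$, $d_X(x_0,(b,c))$, $d_X(x_0,(a,c))$ are all at most $1+\delta_{N_0}$. By Lemma \ref{dist-GP} the three pairwise products $(a\cdot_N b)_{x_0}$, $(b\cdot_N c)_{x_0}$, $(a\cdot_N c)_{x_0}$ are therefore bounded above by $1+\delta_{N_0}+C_N$, a constant depending only on $N_0$.

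Next I would push these bounds through $h$. Proposition \ref{QIGP} supplies a Morse gauge $N'$ and a constant $A'$ (both depending only on $N_0,K,C$), the embedding $\partial_*h\colon \partial X_{x_0}^{(N)}\to\partial Y_{h(x_0)}^{(N')}$, and the estimate $(\partial_*h(p)\cdot_{N'}\partial_*h(q))_{h(x_0)}\le K(p\cdot_N q)_{x_0}+A'$. Applying this to each pair shows the three pairwise $N'$-Gromov products of $\partial_*h(a),\partial_*h(b),\partial_*h(c)$ based at $h(x_0)$ are bounded by some $B=B(N_0,K,C)$. Invoking Lemma \ref{dist-GP} now in $Y$ (with gauge $N'$, legitimate because the images lie in $\partial Y_{h(x_0)}^{(N')}$), I conclude that $h(x_0)$ lies within $k:=B+C_{N'}$ of every side of the image triangle, i.e. $h(x_0)\in E_k(\partial_*h(a),\partial_*h(b),\partial_*h(c))$. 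Finally, since quasi-isometries are $2$-stable with gauge controlled by $N_0,K,C$, the image triangle lies in $\partial_*Y^{(3,N_1')}$ for some $N_1'=N_1'(N_0,K,C)$, so $y_0\in E(\cdots)=E_{K(\cdots)}$ with $K(\cdots)\le 1+\delta_{N_1'}$. Setting $k'=\max\{k,1+\delta_{N_1'}\}\ge\delta_{N_1'}$, both $h(x_0)$ and $y_0$ lie in $E_{k'}(\partial_*h(a),\partial_*h(b),\partial_*h(c))$, whose diameter is bounded by some $L=L(N_1',k')$ via Lemma \ref{coarse}(2). Taking $A:=L$ gives $d_Y(h(x_0),y_0)\le A$, with $A$ depending only on $N_0,K,C$.

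The approach is essentially bookkeeping once this dictionary is in place, so the main thing to verify carefully is that every invocation of Lemma \ref{dist-GP} is legitimate: the relevant boundary points must genuinely lie in the $\partial X_{x_0}^{(N)}$ and $\partial Y_{h(x_0)}^{(N')}$ appearing in its hypothesis, which is precisely what Lemma \ref{cen bspt} and the embedding in Proposition \ref{QIGP} guarantee, and the image triangle must be uniformly Morse so that Lemma \ref{coarse} applies. I expect the only genuine obstacle to be keeping all the Morse gauges and additive constants honestly dependent on $N_0,K,C$ alone. Routing the argument through Gromov products this way sidesteps the more delicate alternative of proving a Morse-type lemma directly for the bi-infinite quasi-geodesic $h\circ(a,b)$ against the geodesic $(\partial_*h(a),\partial_*h(b))$, where the endpoints-at-infinity force a limiting argument.
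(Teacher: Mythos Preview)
Your proof is correct, but it takes a different route from the paper's. The paper argues geometrically: since $x_0$ is within $1+\delta_{N_0}$ of a geodesic $\alpha$ from $a$ to $b$, the image $h(x_0)$ is within $L_1=L_1(K,C,N_0)$ of the $(K,C)$-quasi-geodesic $h(\alpha)$; because the bi-infinite geodesic $\alpha'$ from $\partial_*h(a)$ to $\partial_*h(b)$ is $N_1$-Morse (by $2$-stability), the Morse property forces $h(\alpha)$ to lie in a uniform neighbourhood of $\alpha'$, so $h(x_0)\in E_{L_1+L_2}(\partial_*h(a),\partial_*h(b),\partial_*h(c))$, and Lemma~\ref{coarse} finishes exactly as in your last step. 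In other words, the paper does precisely the ``more delicate alternative'' you chose to sidestep---and it turns out not to be delicate at all, just a one-line appeal to the Morse property of $\alpha'$.

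Your route trades that geometric step for the Gromov-product machinery: Lemma~\ref{dist-GP} as a dictionary between centers and bounded products, and Proposition~\ref{QIGP} (imported from Cordes--Hume) to transport the products through $h$. This is perfectly valid and has the virtue of making the dependence on $(N_0,K,C)$ entirely formulaic, but it relies on a heavier black box. The paper's argument is shorter and more self-contained, needing nothing beyond the Morse definition and Lemma~\ref{coarse}; yours illustrates nicely how the coarse-center condition is equivalent to a Gromov-product condition, a viewpoint that becomes essential later in Proposition~\ref{key} and in the proof of Proposition~\ref{key prop}.
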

 
 \begin{proof}
 
By Proposition \ref{2=bstri}, the quasi isometry $h$ induces a basetriangle stable and $2$-stable homeomorphism $\partial_*h$ between their Morse boundaries. 
For any $(a, b, c)\in \partial_{*}X^{(3, N_0)}$, there exists $N_1$ such that $\partial_*h(\partial_*X^{(3, N_0)})\subset \partial_*Y^{(3, N_1)}$.
Since $x_0$ is a coarse center of $(a, b, c)$, then there exists a geodesic $\alpha$ joining $a$ and $b$, such that $d_X(x_0, \alpha)\le 1+\delta_{N_0}$. Since $h$ is a quasi-isometry, 
then $d_Y(h(x_0), h(\alpha))\le L_1$, where $L_1$ depends only on $K, C$ and $N_0$. Note that $h(\alpha)$ is a $(K, C)$-quasi geodesic between $\partial_*h(a)$ and $\partial_*h(b)$. 
Let $\alpha'$ be a geodesic between $\partial_*h(a)$ and $\partial_*h(b)$ in $Y$. By the Morse property of $\alpha'$, the Hausdorff distance between $h(\alpha)$ and $\alpha'$ is bounded by $L_2$, where $L_2$ depends only on $K, C$ and $N_1$. 
Thus $d_Y(h(x_0), \alpha')\le L_1+L_2$. It follows that $h(x_0)\in E_{L_1+L_2}(\partial_*h(a), \partial_*h(b), \partial_*h(c))$. 

Setting $L_0=\max\{L_1+L_2, 1+\delta_{N_1}\}$, we have $y_0, h(x_0)\in E_{L_0}(\partial_*h(a), \partial_*h(b), \partial_*h(c))$. From the Lemma \ref{coarse}, the set  $E_{L_0}(\partial_*h(a), \partial_*h(b), \partial_*h(c))$ has bounded diameter $A$, where $A$ depends only on $L_0$ and $N_1$. Thus $d_Y( h(x_0), y_0)\le A$ and $A=A(K, C, N_0)$.
 
 \end{proof}

We will now discuss the properties of the Gromov products with respect to coarse centers of basetriangles under quasi-isometries. 

\begin{prop}\label{key}
Let $N_0, N$ be Morse gauges. Let $K\ge 1, C\ge 0$. There exist a constant $A$ and a Morse gauge $N'$ depending only on  $N_0, N, K, C$ such that the following holds.

Let $X, Y$ be two proper geodesic metric spaces and let $h:X\to Y$ be a $(K, C)$-quasi-isometry. For any basetriangle and coarse centers 
$$(a, b, c)\in \partial_*X^{(3, N_0)}, x_0\in E(a, b, c), y_0\in E(\partial_*h(a), \partial_*h(b), \partial_*h(c)),$$
we have $$\partial_*h(\partial X_{x_0}^{(N)})\subset \partial Y_{y_0}^{(N')}.$$
And for any $x_1, x_2\in \partial X_{x_0}^{(N)}$, we have 
$$K^{-1}(x_1\cdot_{N} x_2)_{x_0}-A\le (\partial_*h(x_1) \cdot_{N'} \partial_*h(x_2))_{y_0} \le K(x_1\cdot_{N} x_2)_{x_0}+A.$$

\end{prop}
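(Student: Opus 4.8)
The plan is to reduce the statement to Proposition \ref{QIGP}, which already furnishes the analogous estimate with the interior basepoint $h(x_0)$ in place of the coarse center $y_0$, and then to correct for the change of basepoint using Proposition \ref{QI-bstri} together with Lemma \ref{dist-GP}. Concretely, I would first invoke Proposition \ref{QIGP} for the gauge $N$: it produces a Morse gauge $N''=N''(N,K,C)$ and a constant $A'=A'(N,K,C)$ with $\partial_*h(\partial X_{x_0}^{(N)})\subset\partial Y_{h(x_0)}^{(N'')}$ such that, for all $x_1,x_2\in\partial X_{x_0}^{(N)}$,
$$K^{-1}(x_1\cdot_{N}x_2)_{x_0}-A'\le(\partial_*h(x_1)\cdot_{N''}\partial_*h(x_2))_{h(x_0)}\le K(x_1\cdot_{N}x_2)_{x_0}+A'.$$
Then I would apply Proposition \ref{QI-bstri} to the basetriangle $(a,b,c)$ to obtain a constant $A_0=A_0(N_0,K,C)$ with $d_Y(h(x_0),y_0)\le A_0$.

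Next I would establish the containment $\partial_*h(\partial X_{x_0}^{(N)})\subset\partial Y_{y_0}^{(N')}$, which is needed even to make sense of the product $(\,\cdot_{N'}\,)_{y_0}$. Fixing $p=\partial_*h(x_i)$, the previous step gives an $N''$-Morse ray $[h(x_0),p)$; since $d_Y(h(x_0),y_0)\le A_0$, the segment $[y_0,h(x_0)]$ has length at most $A_0$ and is therefore Morse with a gauge depending only on $A_0$. Applying Lemma \ref{slim} to the triangle $T(y_0,h(x_0),p)$ (equivalently, running the ray argument of Lemma \ref{cen bspt} via Lemma \ref{close M}) shows every geodesic $[y_0,p)$ is $N'$-Morse, with $N'$ depending only on $N''$ and $A_0$, hence only on $N_0,N,K,C$. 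This yields the first assertion and fixes the gauge $N'$.

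The key step is transferring the Gromov product from the basepoint $h(x_0)$ to $y_0$, and here Lemma \ref{dist-GP} is the right instrument. For distinct $x_1,x_2$ (if $x_1=x_2$ both sides are infinite by Lemma \ref{GP}(1), so the inequality is trivial), I would fix a \emph{single} bi-infinite geodesic $\gamma$ in $Y$ joining $\partial_*h(x_1)$ and $\partial_*h(x_2)$. Since these two points lie in both $\partial Y_{h(x_0)}^{(N'')}$ and $\partial Y_{y_0}^{(N')}$, Lemma \ref{dist-GP} applies in each of the hyperbolic spaces $Y_{h(x_0)}^{(N'')}$ and $Y_{y_0}^{(N')}$ with this same $\gamma$, giving
$$\bigl|(\partial_*h(x_1)\cdot_{N''}\partial_*h(x_2))_{h(x_0)}-d_Y(h(x_0),\gamma)\bigr|\le C_{N''},\qquad \bigl|(\partial_*h(x_1)\cdot_{N'}\partial_*h(x_2))_{y_0}-d_Y(y_0,\gamma)\bigr|\le C_{N'}.$$
Combining these with $|d_Y(h(x_0),\gamma)-d_Y(y_0,\gamma)|\le d_Y(h(x_0),y_0)\le A_0$ bounds the difference of the two products by $C_{N''}+C_{N'}+A_0$; substituting into the estimate from Proposition \ref{QIGP} and setting $A=A'+C_{N''}+C_{N'}+A_0$ finishes the proof, with $A$ and $N'$ depending only on $N_0,N,K,C$.

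The main obstacle is precisely this basepoint change, and it is more subtle than it looks: a naive termwise estimate $|(p\cdot q)_{h(x_0)}-(p\cdot q)_{y_0}|\le d_Y(h(x_0),y_0)$ cannot simply be pushed through the $\sup\liminf$ defining the $N$-Gromov products, because the two suprema range over sequences in \emph{different} hyperbolic subspaces, $Y_{h(x_0)}^{(N'')}$ versus $Y_{y_0}^{(N')}$, and simultaneously involve different Morse gauges. Lemma \ref{dist-GP} sidesteps this entirely by converting each product into the basepoint-to-geodesic distance $d_Y(\cdot,\gamma)$, a geometric quantity that \emph{is} stable under both the change of basepoint (up to $A_0$) and the change of gauge (absorbed into the constants $C_{N''},C_{N'}$). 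The remaining care is purely bookkeeping: checking that the gauge $N'$ produced when passing from $h(x_0)$ to $y_0$ is exactly the gauge used in the second application of Lemma \ref{dist-GP}, and that all constants ultimately depend only on $N_0,N,K,C$.
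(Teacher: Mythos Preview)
Your proposal is correct and follows essentially the same route as the paper: invoke Proposition~\ref{QIGP} to get the estimate at the basepoint $h(x_0)$, use Proposition~\ref{QI-bstri} to bound $d_Y(h(x_0),y_0)$, and then appeal to Lemma~\ref{dist-GP} to transfer the Gromov product from $h(x_0)$ to $y_0$. The only cosmetic difference is that the paper enlarges to a single gauge $N'$ serving both basepoints (so that Lemma~\ref{dist-GP} is applied twice with the same gauge, yielding the bound $2C_{N'}+A_0$), whereas you keep two gauges $N''$ and $N'$ and obtain $C_{N''}+C_{N'}+A_0$; your explicit remark on why the naive termwise basepoint change fails is a nice addition.
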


\begin{proof}
The first part comes from the fact that $h$ induces a basetriangle stable homeomorphism between their Morse boundaries. That is, for any $N$, there exists $N'=N'(K, C, N)$ such that $\partial_*h(\partial X_{x_0}^{(N)})\subset \partial Y_{y_0}^{N'}$. 
We can choose $N'$ sufficiently large, such that 
 $$\partial_*h(\partial X_{x_0}^{(N)})\subset \partial Y_{h(x_0)}^{(N')}.$$
This is possible since $d(y_0, h(x_0))\le A_0=A_0(K, C, N_0)$ by Proposition \ref{QI-bstri}. With Lemma \ref{dist-GP}, it is not hard to show that 
$$|(\partial_*h(x_1) \cdot_{N'} \partial_*h(x_2))_{y_0}-(\partial_*h(x_1) \cdot_{N'} \partial_*h(x_2))_{h(x_0)}|\le 2C_N'+A_0,$$
for all $x_1, x_2\in \partial X_{x_0}^{(N)}$, where $C_N'$ is a constant depending only on $N'$. 
The rest of the proof follows from Proposition \ref{QIGP}.

\end{proof}

 Now we are ready to show the following theorem.
 
 \begin{thm}
 Let $h: X\to Y$ be a $(K, C)$-quasi isometry between two proper geodesic metric spaces. Assume that $\partial_*X$ contains at least three points. Then the induced map $\partial_*h:\partial_*X\to \partial_*Y$ is a bihölder, quasisymmetric, strongly quasi-conformal homeomorphism. 
 \end{thm}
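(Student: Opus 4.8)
The plan is to deduce all three metric properties from the single Gromov-product distortion estimate of Proposition \ref{key}, transported to the boundary through the visual-metric dictionary. First I would record that $\partial_*h$ is a homeomorphism (quasi-isometry invariance of the Morse boundary) and, by Proposition \ref{2=bstri}, basetriangle stable, so the three definitions apply. Fix $N_0$ and $N$; since $\partial_*X$ has at least three points there is a basetriangle $(a,b,c)\in\partial_*X^{(3,N_0)}$, and we choose coarse centers $x_0\in E(a,b,c)$ and $y_0\in E(\partial_*h(a),\partial_*h(b),\partial_*h(c))$. Everything then reduces to the single map $f_N\colon(\partial X_{x_0}^{(N)},d_{x_0,\epsilon_N})\to(\partial Y_{y_0}^{(N')},d_{y_0,\epsilon_{N'}})$ between honest metric spaces, for which Proposition \ref{key} supplies a uniform $A$ and $N'$ with $K^{-1}(x_1\cdot_N x_2)_{x_0}-A\le(\partial_*h(x_1)\cdot_{N'}\partial_*h(x_2))_{y_0}\le K(x_1\cdot_N x_2)_{x_0}+A$. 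The visual-metric inequalities give $k_1,k_2>0$ with $k_1e^{-\epsilon_N(\cdot)}\le d_{x_0,\epsilon_N}\le k_2e^{-\epsilon_N(\cdot)}$ (and similarly on $Y$), so each boundary is bounded and each distance is, up to a fixed multiplicative error, the exponential of the relevant Gromov product.

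The bih\"older estimate falls out directly. Substituting the two-sided Gromov-product bound into the exponential visual estimates turns $d_{y_0,\epsilon_{N'}}(\partial_*h(x_1),\partial_*h(x_2))$ into $d_{x_0,\epsilon_N}(x_1,x_2)$ raised to an exponent governed by $\epsilon_{N'}/(K\epsilon_N)$ on one side and $K\epsilon_{N'}/\epsilon_N$ on the other, while the additive $A$ and the constants $k_1,k_2$ become a multiplicative factor after exponentiation. Since the spaces are bounded, these two one-sided power bounds assemble into a bih\"older inequality with constants depending only on $K,C,N_0,N$ through $A,N',\epsilon_N,\epsilon_{N'}$; the freedom in the auxiliary power $\alpha_2$ absorbs the mismatch between the two visual parameters.

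For quasisymmetry and strong quasi-conformality the per-pair bound alone is \emph{not} enough, and this is the heart of the matter. Passing to ratios, $d_{y_0,\epsilon_{N'}}(\partial_*h(p),\partial_*h(q))/d_{y_0,\epsilon_{N'}}(\partial_*h(p),\partial_*h(r))\asymp e^{-\epsilon_{N'}[(\partial_*h(p)\cdot_{N'}\partial_*h(q))_{y_0}-(\partial_*h(p)\cdot_{N'}\partial_*h(r))_{y_0}]}$; applying the per-pair bound to each term separately leaves a spurious dependence on the individual products, which would let the image ratio blow up while the source ratio stays bounded. What I actually need is that a difference of Gromov products sharing a common point distorts $K$-quasi-affinely, i.e.\ $(\partial_*h(p)\cdot_{N'}\partial_*h(q))_{y_0}-(\partial_*h(p)\cdot_{N'}\partial_*h(r))_{y_0}$ is squeezed between $K^{-1}$ and $K$ times $(p\cdot_N q)_{x_0}-(p\cdot_N r)_{x_0}$, up to an additive constant. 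I would establish this from the geometry behind Proposition \ref{key}: the quantity $(p\cdot r)-(p\cdot q)$ coarsely measures the length of the sub-segment of a ray to $p$ along which $r$ still fellow-travels but $q$ has already departed, and a quasi-isometry distorts this length only by the factor $K$ plus an additive error, the fellow-traveling being preserved by the Morse property (Lemma \ref{slim}); equivalently this is the quasi-Möbius / bounded-cross-difference property of $\partial_*h$. Granting this coherent estimate, the source ratio is $\asymp e^{-\epsilon_N[(p\cdot_N q)_{x_0}-(p\cdot_N r)_{x_0}]}$, and separating the cases where it is at least or at most $1$ produces a control function of power form, giving a power quasisymmetry. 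The same estimate handles strong quasi-conformality: for an annulus $A(x,a,ar)$ in $\partial X_{x_0}^{(N)}$, membership confines $(z\cdot_N x)_{x_0}$ to an interval of length $\tfrac1{\epsilon_N}\log r$ up to a constant; the coherent difference bound, taken relative to the fixed point $x$, scales the spread of $(\partial_*h(z)\cdot_{N'}\partial_*h(x))_{y_0}$ by at most $K$, so after converting back through the $\epsilon_{N'}$-visual estimate the image lands in an annulus centred at $\partial_*h(x)$ whose modulus $\phi_f(r)$ is a fixed power of $r$ depending on $r$ alone, exactly the point the per-pair bound missed.

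I expect the main obstacle to be precisely this coherent difference estimate, since Proposition \ref{key} as stated controls each Gromov product only individually, which is fine for the bih\"older conclusion but insufficient for the ratio and annulus statements; the real work is upgrading it to a $K$-quasi-affine bound on differences of shared-basepoint Gromov products via the fellow-traveling of quasi-geodesics in Morse triangles. The remaining issue to watch is uniformity: every constant must depend only on $K,C,N,N_0$ and not on the chosen basetriangle or coarse centers, which is guaranteed because the $A$ and $N'$ of Proposition \ref{key} together with $k_1,k_2,\epsilon_N,\epsilon_{N'}$ are all uniform over admissible $(a,b,c),x_0,y_0$. Hence $f_N$ satisfies the three properties with basetriangle-independent data, which is exactly what it means for $\partial_*h$ to be bih\"older, quasisymmetric and strongly quasi-conformal.
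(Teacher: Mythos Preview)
Your bih\"older derivation matches the paper exactly: exponentiate the two-sided Gromov-product estimate of Proposition~\ref{key} through the visual-metric comparison of Theorem~\ref{metric}.

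For the other two properties the paper takes a much shorter path and sidesteps your ``coherent difference estimate'' entirely. For quasisymmetry it simply invokes Cordes--Hume \cite[Theorem~3.16]{cordes2017stability}, which already shows that $f\colon(\partial X_{x_0}^{(N)},d_{x_0,\epsilon_N})\to(\partial Y_{h(x_0)}^{(N_1)},d_{h(x_0),\epsilon_{N_1}})$ is a power quasisymmetry onto its image with constants depending only on $K,C,N$; the only additional work is to move the basepoint from $h(x_0)$ to the coarse center $y_0$, which is harmless since $d_Y(h(x_0),y_0)$ is uniformly bounded by Proposition~\ref{QI-bstri} and a bounded basepoint change is itself quasisymmetric. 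For strong quasi-conformality the paper asserts that it follows from the bih\"older bound by ``an easy argument''; in any case it drops out instantly once quasisymmetry is in hand, since for $q,r\in A(p,a,ar)$ one has $d(q,p)/d(r,p)\le r$ and hence $d(f(q),f(p))/d(f(r),f(p))\le\psi(r)$, placing the image in a $\psi(r)$-annulus about $f(p)$.

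Your diagnosis that the per-pair bound of Proposition~\ref{key} is by itself insufficient for quasisymmetry (and for an annulus modulus independent of the inner radius) is correct, and this is exactly why the paper outsources that step rather than deriving it from Proposition~\ref{key}. However, your proposed upgrade---a $K$-quasi-affine control on three-point differences $(p\cdot q)-(p\cdot r)$---is stronger than what is actually true or needed; subtracting two instances of the per-pair inequality does not produce such a bound, and the geometric picture you describe really encodes the four-point cross-difference (quasi-M\"obius) control, already established for Morse boundaries in \cite{charney2019quasi}, from which quasisymmetry follows because the visual metrics are bounded. So your route is viable once reformulated through cross-ratios, but it reproves a black box the paper is content to cite; the paper's approach buys brevity at the price of quoting \cite{cordes2017stability}, while yours would be self-contained but substantially longer and would require tightening the intermediate estimate.
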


 \begin{proof}

The quasi isometry $h: X\to Y$ induces a homeomorphism $f:=\partial_*h$ between their Morse boundaries. 
Let $N_0$ and $N$ be Morse gauges. Then from Proposition \ref{key}, there exists $A$ and $N'$ such that the following holds. 

For any 
$$(a, b, c)\in \partial_*X^{(3, N_0)}, x_0\in E(a, b, c), y_0\in E(f(a), f(b), f(c)),$$
there exists a map $$f|_{\partial X_{x_0}^{(N)}}:(\partial X_{x_0}^{(N)}, d_{x_0, \epsilon_{N}})\to (\partial Y_{y_0}^{(N')}, d_{y_0,\epsilon_{N'}})$$ between two metric spaces.
And for any $x_1, x_2\in \partial X_{x_0}^{(N)}$, we have 

\begin{equation}\label{centerGP}
K^{-1}(x_1\cdot_{N} x_2)_{x_0}-A\le (f(x_1) \cdot_{N'} f(x_2))_{y_0} \le K(x_1\cdot_{N} x_2)_{x_0}+A.
\end{equation}

We will show that the map $f|_{\partial X_{x_0}^{(N)}}$ is bihölder, quasisymmetric and strongly quasi-conformal.
Equation \ref{key} implies that, 
$$e^{-K(x_1\cdot_{N} x_2)_{x_0}-A}\le e^{- (f(x_1) \cdot_{N'} f(x_2))_{y_0}}\le e^{-K^{-1}(x_1\cdot_{N} x_2)_{x_0}+A}  \iff$$
$$(e^{-\epsilon_{N}(x_1\cdot_{N} x_2)_{x_0}})^{K\frac{\epsilon_{N'}}{\epsilon_{N}}}\cdot e^{-A\epsilon_{N'}}\le e^{-\epsilon_{N'}(f(x_1) \cdot_{N'} f(x_2))_{y_0}}\le (e^{-\epsilon_{N}(x_1\cdot_{N} x_2)_{x_0}})^{K^{-1}\frac{\epsilon_{N'}}{\epsilon_{N}}}\cdot e^{A\epsilon_{N'}}.$$
From Theorem \ref{metric}, we have a constant $c=3-2\sqrt{2}$ such that
$$\displaystyle{ ce^{-\epsilon_N(x_1\cdot_N x_2)_{x_0}}\le d_{x_0, \epsilon_N}(x_1, x_2)\le e^{-\epsilon_N(x_1\cdot_N x_2)_{x_0}}},$$ $$\displaystyle{ ce^{-\epsilon_{N'}(f(x_1)\cdot_{N'} f(x_2))_{y_0}}\le d_{y_0, \epsilon_{N'}}(f(x_1), f(x_2))\le e^{-\epsilon_{N'}(f(x_1)\cdot_{N'} f(x_2))_{y_0}}}.$$
It is not hard to see that, there exist positive constants $C_0\ge 1, \alpha_1, \alpha_2$ such that the following holds.

$$\frac{1}{C_0}d_{x_0, \epsilon_N}(x_1, x_2)^{\frac{1}{\alpha_1}}\le d_{f(x_0), \epsilon_{N'}}(f(x_1), f(x_2))^{\alpha_2}\le C_0d_{x_0, \epsilon_N}(x_1, x_2)^{\alpha_1},$$
for all $x_1, x_2\in \partial_* X_{x_0}^{(N)}$. These constants $C_0, \alpha_1, \alpha_2$ depend only on $K, C, N, N_0$. 
So the map $f|_{\partial X_{x_0}^{(N)}}$ is bihölder.
This shows that $f:\partial_*X\to \partial_*Y$ is bihölder.

Next we show that $f|_{\partial X_{x_0}^{(N)}}$ is bihölder implies that $f|_{\partial X_{x_0}^{(N)}}$ is strongly quasi-conformal. 
By an easy argument, there exist a function $\phi :[1, \infty )\to  [1, \infty )$ and positive constants $\alpha=\alpha(K, C, N, N_0)>0$ and $\beta=\beta(K, C, N, N_0)\le 1$ such that the following holds.

For all $p_0\in \partial X_{x_0}^{(N)}$, $a> 0$ and $r\ge 1$, we have $$f|_{\partial X_{x_0}^{(N)}}(A(p_0, a, ra))\subset A(f(p_0), \beta a^{\alpha}, \phi(r)\beta a^{\alpha}),$$ where the $r$-annulus $A(p_0, a, ar)=\{p\in \partial X_{x_0}^{(N)} \ |\  a\le d_{x_0, \epsilon_{N}}(p, p_0)\le ar\}$ and the $f(r)$-annulus $A(f(p_0), \beta a^{\alpha}, \phi(r)\beta a^{\alpha})=\{q\in \partial X_{y_0}^{(N')} \ |\  \beta a^{\alpha}\le d_{y_0, \epsilon_{N'}}(q, f(p_0))\le \phi(r)\beta a^{\alpha}\}$. 
This shows that $f:\partial_*X\to \partial_*Y$ is strongly quasi-conformal.

Cordes and Hume \cite[Theorem 3.16]{cordes2017stability} show that, there is some $N_1=N_1(K, C, N)$ such that
$f:(\partial X_{x_0}^{(N)}, d_{x_0, \epsilon_N})\to (\partial Y_{h(x_0)}^{(N_1)}, d_{h(x_0), \epsilon_{N_1}})$ is a quasisymmetry onto its image.
We will change the basepoint $h(x_0)$ to $y_0$. By Proposition \ref{QI-bstri}, $d(h(x_0), y_0)\le A(K, C, N_0)$. By Lemma \ref{close M}, there exists $N'=N'(N_1, A)$ such that, changing the basepoint induces a natural map $i_{h(x_0), y_0}:\partial Y_{h(x_0)}^{(N_1)}\to \partial Y_{y_0}^{(N')}$. It is a quasi-symmetric map.
Thus $$f|_{\partial X_{x_0}^{(N)}}:(\partial X_{x_0}^{(N)}, d_{x_0, \epsilon_{N}})\to (\partial Y_{y_0}^{(N')}, d_{y_0,\epsilon_{N'}})$$ is a quasisymmetry. This shows that $f:\partial_*X\to \partial_*Y$ is bihölder.
\end{proof}

\begin{rmk}
Cordes and Hume actually show more:  the map $f: (\partial X_{x_0}^{(N)}, d_{x_0, \epsilon_{N}})\to (\partial Y_{y_0}^{(N')}, d_{y_0,\epsilon_{N'}})$ is a power quasi-symmetry onto its image and the power quasisymmetry constants depend only on $N, N', K, C$.
\end{rmk}

 \section{From Boundaries to the Interior}
 
 In this section we will prove the reverse implication in Theorem \ref{main} . That is, if $f:\partial_*X\to \partial_*Y$ is a homeomorphism such that $f$ and $f^{-1}$ are bih\"older or strongly quasi-conformal or quasi-symmetric, then $f$ is induced by a quasi-isometry between $X$ and $Y$. We assume that $\partial_*X$ contains at least three points and that  both $X$ and $Y$ are cocompact. By Proposition \ref{2=bstri}, we know that $f$ and $f^{-1}$ are $2$-stable maps in our setting.

 \subsection{From the Morse boundary to the interior}
Now given a homeomorphism $f$ between $\partial_*X$ and $\partial_*Y$, we would like to extend $f$ to a map $\Phi_{f}$ between the interiors. 

Let $\Theta_3(\partial_*X)$ be the set of distinct triples in $\partial_*X$. For any three distinct points $a, b, c\in\partial_*X$, we define a map:
 $$\pi_X:\Theta_3 (\partial_*X)\to X$$ 
 $$\pi_X((a, b, c))=p,$$ 
 where $p$ is a choice of coarse center in $E(a, b, c)$. We say $\pi_{X}((a, b, c))$ the {\it  projection} of $(a, b, c)$.

Now for any $N$, let us define a map: 
$$\pi_{X, N}:\partial_*X^{(3, N)}\to X$$
$$\pi_{X, N}((a, b, c))=\pi_{X}((a, b, c)),$$
where $(a, b, c)\in\partial_*X^{(3, N)}$. That is, $\pi_{X, N}=\pi_{X}|_{\partial_*X^{(3, N)}}$. Sometimes we want to emphasize the Morse gauge, we will use $\pi_{X, N}$ instead of $\pi_X$.

Fix a Morse gauge $N_0$ such that $\partial_*X^{(3, N_0)}\neq\emptyset$. Choose an ideal triangle $(a, b, c)\in \partial_*X^{(3, N_0)}$. Let $x_0=\pi_{X}((a, b, c))\in E(a, b, c)$. 
By hypothesis $X$ is cocompact. That is, there exists a group $G$ acting cocompactly by isometries on $X$. Isometries of $X$ preserve the Morse gauges of bi-infinite geodesics. 
Hence, for any $g\in G$, we have that $g(x_0)\in E(ga, gb, gc)$. By Lemma \ref{coarse}, the distance $$d_X(g(x_0), \pi_{X, N_0}(ga, gb, gc))\le L,$$ 
where $L$ depends on $N_0$.
Since $X$ is cocompact, there exists $R_0>0$ such that for any $x\in X$  and $g\in G$ such that $$d_X(g(x_0), x)\le R_0.$$ Let $R=R_0+L$, 
we  have that 
$$\pi_{X, N_0}^{-1}(B(x, R))\neq \emptyset$$ for every $x\in X$.
 
 We know that $f$ is $2$-stable, which implies $f(\partial_*X^{(3, N_0)})\subset\partial_*Y^{(3, N_1)}$ for some $N_1$. Let 
 $$\pi_Y: \Theta_3(\partial_*Y)\to Y\mbox{ and } \pi_{Y, N_1}:\partial_*Y^{(3, N_1)}\to Y $$ be the analogous map for $Y$, where $\pi_{Y, N_1}=\pi_Y|_{\partial_*Y^{(3, N_1)}}$. Now we define $$\Phi_f: X\to Y$$ by $\Phi_f(x)=y$, where $y$ is a point in the set 
 $$\pi_{Y, N_1}(f(\pi_{X, N_0}^{-1}(B(x, R))))\subset Y.$$
 This map $\Phi_{f}$ is called an {\it  extension} of $f$ from $\partial_*X$ to $X$. The definition of $\Phi_{f}$ depends on choices of $N_0, N_1$, $R$, $\pi_X$ and $\pi_Y$.  
 
\begin{figure}[!ht]
\labellist

\pinlabel $\Phi_{f}$ at 220 85
\pinlabel $y$ at 350 75
\pinlabel $x$ at  53 83
\pinlabel $x_1$ at  87 84
\pinlabel $a_1$ at 14 126
\pinlabel $f(a_1)$ at 285 133
\pinlabel $b_1$ at 77 -1
\pinlabel $f(b_1)$ at 339 -2
\pinlabel $c_1$ at 160 113
\pinlabel $f(c_1)$ at 437 117
\pinlabel $X$ at 45 39
\pinlabel $Y$ at 390 34
\pinlabel $\partial_*X$ at 19 12
\pinlabel $\partial_*Y$ at 410 12

\endlabellist
\includegraphics[width=6in]{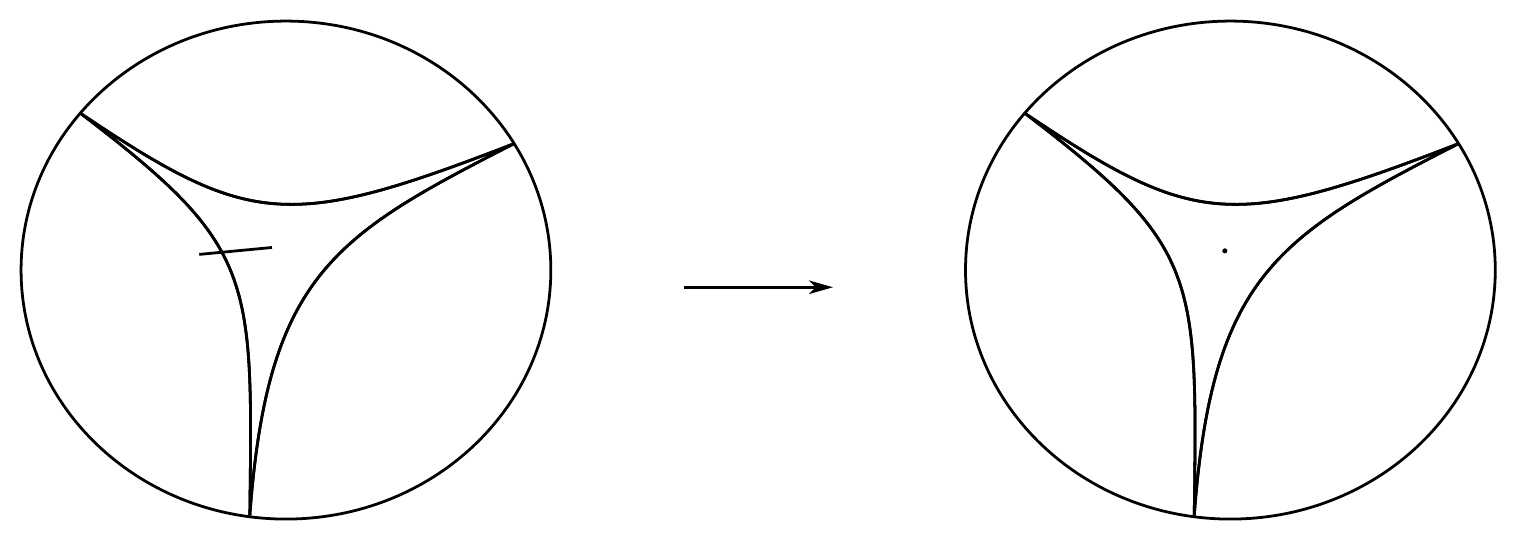}
\caption{$\Phi_{f}$ is an extension of $f$. $\Phi_{f}(x)=y$.} 
\label{extension}
\end{figure}

 From the above construction. It is easy to see that
 $$\Phi_f(x)=y$$ implies that there exists $(a_1, b_1, c_1)\in\partial_*X^{(3, N_0)}$ such that $$d_X(\pi_X((a_1, b_1, c_1)), x)\le R \mbox{ and } y=\pi_Y(f(a_1), f(b_1), f(c_1)).$$ See Figure \ref{extension}, where $x_1=\pi_{X}((a_1, b_1, c_1))$.
 
 We postpone the next proposition's proof till the end of this section. It is quite useful to control the function $\Phi_f$.

\begin{prop}\label{key prop}
Let $X, Y$ be two proper geodesic metric spaces. Suppose that $X$ is cocompact and $\partial_*X$ contains at least three points. Let $f:\partial_*X\to \partial_*Y$ be a homeomorphism which is quasisymmetric or bi-hölder or strongly quasi-conformal. Then for any Morse gauge $N$, there exists a function $\eta_N: (0, \infty)\to (0, \infty)$ such that for all $(a, b, c), (a', b', c')$ in $\partial_*X^{(3, N)}$, 
$$d_X(\pi_{X}(a, b, c), \pi_{X}(a', b', c'))\le \theta \Rightarrow 
d_Y(\pi_Y(f(a), f(b), f(c)), \pi_Y(f(a'), (b'), (c')) )\le \eta_N(\theta).$$

\end{prop}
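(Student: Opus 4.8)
The plan is to fix, once and for all, the coarse centers $p=\pi_X(a,b,c)$, $p'=\pi_X(a',b',c')$ in $X$ and $q=\pi_Y(f(a),f(b),f(c))$, $q'=\pi_Y(f(a'),f(b'),f(c'))$ in $Y$, and to recast the conclusion as follows: it suffices to prove that $q'$ is a \emph{coarse center} of the first image triangle $(f(a),f(b),f(c))$, i.e. that $q'\in E_{k}(f(a),f(b),f(c))$ for some $k=k(N,\theta)$. Indeed, since $q\in E(f(a),f(b),f(c))$ by construction, Lemma~\ref{coarse}(2) then bounds $d_Y(q,q')$ by a constant depending only on $k$ and the Morse gauge of the image triangle, and this bound is the desired $\eta_N(\theta)$. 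The hypothesis gives $d_X(p,p')\le\theta$. I would first collect all six points into one metric boundary: Lemma~\ref{cen bspt} gives $a,b,c\in\partial X_{p}^{(N_1)}$ and $a',b',c'\in\partial X_{p'}^{(N_1)}$ for some $N_1=N_1(N)$, and since $d_X(p,p')\le\theta$ the change-of-basepoint Lemma~\ref{close M} puts all six points in a single $\partial X_{p'}^{(M)}$ with $M=M(N,\theta)$. Because $f$ is, by the definition of bih\"older / quasisymmetric / strongly quasi-conformal, $N$-basetriangle stable (Definition~\ref{tri sta}), applied to the basetriangle $(a',b',c')$ with centers $p',q'$ it restricts to a map $f_M\colon(\partial X_{p'}^{(M)},d_{p',\epsilon_M})\to(\partial Y_{q'}^{(M')},d_{q',\epsilon_{M'}})$ with $M'=M'(N,\theta)$ and with distortion data ($C,\alpha_i$, or $\psi$, or $\phi$) depending only on $N$, $\theta$ and $f$.

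Next I would record the \emph{spread} estimates. Using Lemma~\ref{dist-GP} together with the visual-metric inequalities of Theorem~\ref{metric}, the fact that $p'$ lies within $1+\delta_N$ of every side of $(a',b',c')$ and within $\theta+1+\delta_N$ of every side of $(a,b,c)$ (the latter because $d_X(p,p')\le\theta$) yields uniform lower bounds
\[
d_{p',\epsilon_M}(a',b'),\ \ldots\ \ge c_0>0,\qquad d_{p',\epsilon_M}(a,b),\ \ldots\ \ge c_1>0,
\]
and, because $q'$ lies within $1+\delta_{M'}$ of every side of $(f(a'),f(b'),f(c'))$,
\[
d_{q',\epsilon_{M'}}(f(a'),f(b')),\ \ldots\ \ge c_2>0,
\]
with $c_0,c_1,c_2$ depending only on $N,\theta$. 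Both boundaries also have uniformly bounded diameter $\mathrm{diam}\le D_0(N,\theta)$, since the relevant Gromov products are bounded below.

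The heart of the argument is to bound $d_{q',\epsilon_{M'}}(f(a),f(b))$ (and the two analogous side-lengths) from below. For this I would choose a reference vertex $w\in\{a',b',c'\}$ that is simultaneously bounded away from $a$ in $X$ and whose image is bounded away from $f(a)$ in $Y$: since $a',b',c'$ are pairwise $\ge c_0$ apart, at most one lies within $c_0/2$ of $a$, and since $f(a'),f(b'),f(c')$ are pairwise $\ge c_2$ apart, at most one has image within $c_2/2$ of $f(a)$; hence some $w$ survives both, giving $d_{p',\epsilon_M}(a,w)\ge c_0/2$ and $d_{q',\epsilon_{M'}}(f(a),f(w))\ge c_2/2$. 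The three cases then run in parallel. In the \emph{bih\"older} case the lower bound is immediate from $d_{p',\epsilon_M}(a,b)\ge c_1$. In the \emph{quasisymmetric} case I apply the definition with center $a$ to the pair $(w,b)$ and invert, so that $d_{q',\epsilon_{M'}}(f(a),f(b))\ge d_{q',\epsilon_{M'}}(f(a),f(w))\,/\,\psi\!\big(d_{p',\epsilon_M}(a,w)/d_{p',\epsilon_M}(a,b)\big)\ge (c_2/2)/\psi(D_0/c_1)$. In the \emph{strongly quasi-conformal} case I note that $b$ and $w$ both lie in the annulus about $a$ of inner radius $\min(c_1,c_0/2)$ and outer radius $D_0$, an $r$-annulus with $r=D_0/\min(c_1,c_0/2)$ bounded; its image lies in a single $\phi(r)$-annulus about $f(a)$, whence $d_{q',\epsilon_{M'}}(f(a),f(b))\ge d_{q',\epsilon_{M'}}(f(a),f(w))/\phi(r)\ge (c_2/2)/\phi(r)$. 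In every case $d_{q',\epsilon_{M'}}(f(a),f(b))\ge c_3>0$ with $c_3=c_3(N,\theta,f)$.

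Finally, a uniform lower bound on each of the three visual side-lengths translates, via Theorem~\ref{metric} and Lemma~\ref{dist-GP}, into a uniform upper bound on $d_Y(q',\gamma)$ for $\gamma$ any side of $T(f(a),f(b),f(c))$; thus $q'\in E_k(f(a),f(b),f(c))$ with $k=k(N,\theta,f)$, and Lemma~\ref{coarse}(2) closes the argument. I expect the \emph{strongly quasi-conformal} case to be the main obstacle: unlike bih\"older or quasisymmetric control, the annulus condition only compares distances measured from the \emph{same} image-of-center, so a naive comparison through a degenerate cross-pair with $d_{p',\epsilon_M}(a,w)\to 0$ produces an unbounded annular ratio and a useless estimate. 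The simultaneous choice of $w$ (far from $a$ in $X$ \emph{and} from $f(a)$ in $Y$), made possible by having three reference vertices against two ``bad'' sets of size at most one each, is exactly what keeps $r$ bounded and is the technical crux.
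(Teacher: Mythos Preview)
Your proposal is correct and follows essentially the same architecture as the paper: reduce to showing that one image center lies in a uniform $E_k$-set of the other image triangle by establishing uniform lower visual bounds on the side-lengths of that triangle as seen from the other center, then invoke Lemma~\ref{coarse}(2). The one noteworthy difference is in how the key lower bounds are obtained in the quasisymmetric and strongly quasi-conformal cases: the paper splits the quasisymmetric argument into three sub-cases according to how many vertices the two triangles share (using the triangle inequality through two vertices of the reference triangle), and in the strongly quasi-conformal case centers the annulus at a vertex of the target triangle and again uses two reference vertices together with the triangle inequality; your single pigeonhole choice of a reference vertex $w\in\{a',b',c'\}$ that is simultaneously far from $a$ in $X$ and from $f(a)$ in $Y$ unifies all of this and is a genuine streamlining, at the cost of no extra generality.
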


Now suppose we have Proposition \ref{key prop}. 
\begin{prop}\label{control}
Fix $N_0, N_1$ and $R$ as above. Then for any $\upsilon\ge 0$, there exists $D$ such that for any $x_1, x_2\in X$, 
$$d_X(x_1, x_2)\le \upsilon \Rightarrow d_Y(\Phi_f(x_1), \Phi_f(x_2))\le D.$$
\end{prop}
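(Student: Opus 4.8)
The plan is to reduce the statement to Proposition \ref{key prop} applied with the single fixed Morse gauge $N_0$. First I would unwind the definition of the extension $\Phi_f$ at the two given points. Given $x_1, x_2 \in X$ with $d_X(x_1, x_2) \le \upsilon$, the construction of $\Phi_f$ (recall $R$ was chosen so that $\pi_{X, N_0}^{-1}(B(x, R)) \neq \emptyset$ for every $x$) furnishes ideal triangles $(a_1, b_1, c_1), (a_2, b_2, c_2) \in \partial_*X^{(3, N_0)}$ whose projections $p_i := \pi_{X}(a_i, b_i, c_i)$ satisfy $d_X(p_i, x_i) \le R$, and such that $\Phi_f(x_i) = \pi_{Y}(f(a_i), f(b_i), f(c_i))$. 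The crucial point is that both triples live in the \emph{same} gauge class $\partial_*X^{(3, N_0)}$, by construction.

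The only preliminary estimate needed is that the two projections are coarsely close in $X$. By the triangle inequality,
\[
d_X(p_1, p_2) \le d_X(p_1, x_1) + d_X(x_1, x_2) + d_X(x_2, p_2) \le R + \upsilon + R = 2R + \upsilon.
\]
Since $(a_1, b_1, c_1)$ and $(a_2, b_2, c_2)$ both lie in $\partial_*X^{(3, N_0)}$, I would then invoke Proposition \ref{key prop} with $N = N_0$ and $\theta = 2R + \upsilon$, which gives
\[
d_Y\bigl(\pi_Y(f(a_1), f(b_1), f(c_1)),\, \pi_Y(f(a_2), f(b_2), f(c_2))\bigr) \le \eta_{N_0}(2R + \upsilon),
\]
that is, $d_Y(\Phi_f(x_1), \Phi_f(x_2)) \le \eta_{N_0}(2R + \upsilon)$. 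Setting $D := \eta_{N_0}(2R + \upsilon)$ finishes the argument, since $N_0$ and $R$ are fixed once and for all and $\upsilon$ is given, so $D$ depends only on $\upsilon$ and the fixed data.

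At this level there is essentially no obstacle: once Proposition \ref{key prop} is available, the statement is a one-line consequence of the triangle inequality together with the fact that the controlling function $\eta_N$ there depends only on the gauge $N$, which we have frozen at $N_0$. The one point that requires care — and which makes the whole scheme work — is that the triples produced by the definition of $\Phi_f$ genuinely lie in a single gauge class $\partial_*X^{(3, N_0)}$, so that one and the same function $\eta_{N_0}$ controls both $\Phi_f(x_1)$ and $\Phi_f(x_2)$ simultaneously; this holds precisely because $N_0$ was fixed in advance of the construction. The genuine difficulty is of course hidden in Proposition \ref{key prop} itself, whose proof must exploit the hypotheses that $f$ and $f^{-1}$ are quasisymmetric, bih\"older, or strongly quasi-conformal, in combination with the coarse-center comparison of Lemma \ref{close tri} and the bounded-diameter estimate of Lemma \ref{coarse}.
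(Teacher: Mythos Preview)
Your proposal is correct and follows essentially the same argument as the paper: unwind the definition of $\Phi_f$ to obtain two $N_0$-triples with projections within $R$ of $x_1, x_2$, use the triangle inequality to bound the distance between those projections by $\upsilon+2R$, and apply Proposition~\ref{key prop} with $N=N_0$ to obtain $D=\eta_{N_0}(\upsilon+2R)$. The paper's proof is line-for-line the same, including the choice of $D$.
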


\begin{proof}
For $i=1, 2$, choose $(a_i, b_i, c_i)\in \pi_{X, N_0}^{-1}(B(x_i, R))$ such that 
$$\pi_{Y}(f(a_i), f(b_i), f(c_i))=\Phi_f(x_i).$$
Since $d_X(x_1, x_2)\le \upsilon$ and $d_X(x_i, \pi_{X}(a_i, b_i, c_i))\le R$, we have 
 $$d_X(\pi_{X}(a_1, b_1, c_1), \pi_{X}(a_2, b_2, c_2))\le \upsilon+2R.$$
By Proposition \ref{key prop}, we choose $D=\eta_{N_0}(\upsilon+2R)$ and then 
$$d_Y(\pi_{Y}(f(a_1), f(b_1), f(c_1)), \pi_{Y}(f(a_2), (b_2), (c_2)))\le D.$$ 
This proves the proposition.

\end{proof}

\subsection{Main Theorem}

Now we are ready to prove our main theorem. It suffices to show the following theorem.

\begin{thm}\label{thm 2}
Let $X$ and $Y$ be proper, cocompact geodesic metric spaces and assume that $\partial_*X$ contains at least three points. Let $f: \partial_*X \to \partial_*Y$ be a homeomorphism. Suppose that $f$ and $f^{-1}$ are bihölder or quasi-symmetric or strongly quasi-conformal.
Then there exists a quasi-isometry $h: X \to Y$ with $\partial_*h=f$.
\end{thm}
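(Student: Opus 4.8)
The plan is to produce the quasi-isometry as the extension map $\Phi_f$ constructed in the previous subsection, and to verify directly that it satisfies the defining inequalities of a quasi-isometry. The key engine is Proposition~\ref{key prop} and its consequence Proposition~\ref{control}, which guarantee that $\Phi_f$ is \emph{coarsely Lipschitz}: points a bounded distance apart in $X$ have images a bounded distance apart in $Y$. Since everything in the hypotheses is symmetric in $X$ and $Y$ (both spaces are cocompact, and both $f$ and $f^{-1}$ are bih\"older, or both quasi-symmetric, or both strongly quasi-conformal), I would also build the extension $\Phi_{f^{-1}}: Y \to X$ and obtain the analogue of Proposition~\ref{control} for it.

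\textbf{Step 1: coarse Lipschitz bound.} First I would invoke Proposition~\ref{control} to get, for every $\upsilon \ge 0$, a constant $D = D(\upsilon)$ with $d_X(x_1,x_2)\le \upsilon \Rightarrow d_Y(\Phi_f(x_1),\Phi_f(x_2))\le D$. Applying this to a geodesic chain of unit steps between any two points $x_1,x_2\in X$ upgrades this to a genuine upper bound $d_Y(\Phi_f(x_1),\Phi_f(x_2))\le K\, d_X(x_1,x_2)+C$ for suitable $K,C$ (depending on $D(1)$ and $D(0)$), since $X$ is geodesic.

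\textbf{Step 2: coarse inverse.} Next I would show $\Phi_{f^{-1}}\circ \Phi_f$ is at bounded distance from $\mathrm{id}_X$, and symmetrically $\Phi_f\circ\Phi_{f^{-1}}$ is close to $\mathrm{id}_Y$. This is where the structure of the construction matters: for $x\in X$, $\Phi_f(x)$ is a coarse center of $(f(a_1),f(b_1),f(c_1))$ where $(a_1,b_1,c_1)\in\partial_*X^{(3,N_0)}$ projects within $R$ of $x$; then $\Phi_{f^{-1}}$ of that point is a coarse center of $(f^{-1}f(a_2),\dots)$ for some triple whose projection is within $R$ of $\Phi_f(x)$. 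I would use Lemma~\ref{close tri} (coarse centers of nearby triples are uniformly close) together with Proposition~\ref{key prop} applied to $f^{-1}$ to control how far the relevant triples can drift, concluding $d_X(\Phi_{f^{-1}}\Phi_f(x),x)\le M$ for a uniform $M$. Given the upper Lipschitz bound from Step~1 and this coarse-inverse property, standard coarse geometry yields the lower bound $K^{-1}d_X(x_1,x_2)-C' \le d_Y(\Phi_f(x_1),\Phi_f(x_2))$ and coarse surjectivity, so $\Phi_f$ is a $(K,C)$-quasi-isometry.

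\textbf{Step 3: boundary identification.} Finally I must check $\partial_*(\Phi_f)=f$. Here I would take a boundary point $p\in\partial_*X$, represent it as a limit $p=\lim x_n$ along a Morse ray, track the triples $(a_n,b_n,c_n)$ whose projections approximate the $x_n$ (choosing them so that one vertex converges to $p$), and show via the convergence criterion Lemma~\ref{conv} and the uniform control of coarse centers that $\Phi_f(x_n)$ converges to $f(p)$ in the appropriate $\partial Y^{(N')}_{y_0}$; this identifies the induced boundary map with $f$.

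The main obstacle I anticipate is Step~2, the coarse-inverse estimate. The extension $\Phi_f$ involves several arbitrary choices (the coarse-center selection $\pi_X$, the ball radius $R$, the Morse gauges $N_0,N_1$), so composing $\Phi_{f^{-1}}$ with $\Phi_f$ does not literally return $x$; one must bound the accumulated ambiguity uniformly over all of $X$. The delicate point is that $\Phi_{f^{-1}}$ is built from a \emph{different} Morse gauge bookkeeping than $\Phi_f$, so controlling the triple that $\Phi_{f^{-1}}$ reads off requires combining the $2$-stability of both maps (to keep all the relevant triples in a single $\partial_*X^{(3,N)}$), Proposition~\ref{key prop} for $f^{-1}$, and the uniform diameter bound on coarse-center sets from Lemma~\ref{coarse}. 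Making the constant $M$ genuinely independent of $x$ is the crux; once that is done, the remaining quasi-isometry axioms follow formally.
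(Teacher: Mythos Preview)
Your outline is essentially the paper's own argument: Step~1 (chain argument from Proposition~\ref{control}), Step~2 (quasi-inverse via Proposition~\ref{key prop}), and Step~3 (boundary identification) are exactly what the paper does, with the same key inputs. One clarification on Step~3: you cannot simply \emph{choose} the triples $(a_n,b_n,c_n)$ so that a vertex converges to $p$; rather, the paper passes to a subsequence using compactness of $\partial_*^{N'}X_{x_0}$, obtains limits $p_1,p_2,p_3$, and then \emph{proves} (via Lemma~\ref{close tri} and Lemma~\ref{dist-GP}) that two of these limits must equal $p$ --- otherwise the coarse centers would stay bounded, contradicting $d(x_0,x_n)\to\infty$. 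This ``Claim'' is the only real content in Step~3 and is a bit more than your parenthetical suggests, but once noted, your proposal and the paper's proof coincide.
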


\begin{proof}
We assume that, the Morse boundary of $X$ contains at least three points, similarly for $Y$. 
We can choose a Morse gauge $N_0$ so that 
$$\partial_*X^{(3, N_0)}\neq\emptyset\mbox{ and } \partial_*Y^{(3, N_0)}\neq\emptyset.$$
Note that by Proposition \ref{2=bstri}, homeomorphisms $f$ and $f^{-1}$ are $2$-stable.
There exists $N_1$ such that 
$$f(\partial_*X^{(2, N_0)})\subset \partial_*Y^{(2, N_1)} \mbox{ and } f^{-1}(\partial_*Y^{(2, N_0)})\subset \partial_*X^{(2, N_1)}.$$ 
By the construction above, there exists constant $R> 0$ so that 
$$\pi_{X, N_0}^{-1}(B(x, R))\neq\emptyset \mbox{ and } \pi_{Y, N_0}^{-1}(B(y, R))\neq\emptyset$$ for all $x\in X$ and $y\in Y$.

Recall that $\Phi_f$ is an extension of $f$ from $\partial_*X$ to $X$ and $\Phi_{f^{-1}}$ is an extension of $f^{-1}$ from $\partial_*Y$ to $Y$. 
Now let us prove that $\Phi_f$ is a quasi-isometry. 
It is enough to show the following.
\begin{itemize}
\item
There exist constants $A, B$ such that for any $x_1, x_2\in X$ and $y_1, y_2\in Y$, we have
$$d_Y(\Phi_{f}(x_1), \Phi_{f}(x_2))\le Ad_X(x_1,x_2)+B \mbox{ and } d_X(\Phi_{f^{-1}}(y_1), \Phi_{f^{-1}}(y_2))\le Ad_Y(y_1,y_2)+B.$$ 
\item
$\Phi_f$ and $\Phi_{f^{-1}}$ are quasi-inverses.
\end{itemize}

Let $x_1, x_2\in X$, choose integer $n$ such that $n\le d_X(x_1, x_2)< n+1$. Choose a sequence of $n+2$ points $x_1=p_0, p_1, ..., p_{n+1}=x_2$ on the geodesic $[x_1, x_2]$ such that 
$$d_X(p_i, p_{i+1})\le 1,$$ 
where  $0\le i\le n$. 
By Proposition \ref{control}, there exists $D=\eta_{N_0}(1+2R)$ such that 
for all $0\le i\le n$.
It follows that $$d_Y(\Phi_f(p_i), \Phi_f(p_{i+1}))\le D$$

$$d_Y(\Phi_f(x_1), \Phi_f(x_2))\le D(n+1)\le D(d_X(x_1, x_2)+1).$$
A similar argument for $\Phi_{f^{-1}}$ gives us that for all $y_1, y_2\in Y$, we have $$d_X(\Phi_{f^{-1}}(y_1), \Phi_{f^{-1}}(y_2))\le D(d_Y(y_1, y_2)+1).$$

Next we show that $\Phi_f$ and $\Phi_{f^{-1}}$ are quasi-inverses.
Let $y\in Y$. Set $ x=\Phi_{f^{-1}}(y) \mbox{ and } y_1=\Phi_f(x).$ 
Choose 
$(a_1, b_1, c_1)\in \pi^{-1}_{X, N_0}(B(x, R))\mbox { and } (f(a), f(b), f(c))\in \pi^{-1}_{Y, N_0}(B(y, R))$ such that 
$$\pi_{Y, N_1}(f(a_1), f(b_1), f(c_1))=y_1 \mbox{ and }\pi_{X, N_1}(a, b, c)=x.$$

Denote $y_0=\pi_Y(f(a), f(b), f(c))$ and $x_1=\pi_{X}(a_1, b_1, c_1)$.
See Figure \ref{4}.
Note that $$d_Y(y, y_0)\le R \mbox{ and } d_X(x, x_1)\le R.$$

\begin{figure}[!ht]
\labellist

\pinlabel $\Phi_{f}$ at 178 72
\pinlabel $\Phi_{f^{-1}}$ at 178 51
\pinlabel $y$ at 250 90
\pinlabel $x$ at  63 89
\pinlabel $y_0$ at 280 83
\pinlabel $x_1$ at 70 56
\pinlabel $a$ at 59 134
\pinlabel $a_1$ at 85 3
\pinlabel $f(a)$ at 266 136
\pinlabel $f(a_1)$ at 299 2
\pinlabel $b$ at 3 76
\pinlabel $b_1$ at 1.5 65
\pinlabel $f(b)$ at 208 76
\pinlabel $f(b_1)$ at 210 43
\pinlabel $c$ at 132 85
\pinlabel $c_1$ at 134 77
\pinlabel $f(c)$ at 350 83
\pinlabel $f(c_1)$ at 351 64
\pinlabel $y_1$ at 285 42
\pinlabel $X$ at 40 34
\pinlabel $Y$ at 310 34
\pinlabel $\partial_*X$ at 13 12
\pinlabel $\partial_*Y$ at 340 17

\endlabellist
\includegraphics[width=6in]{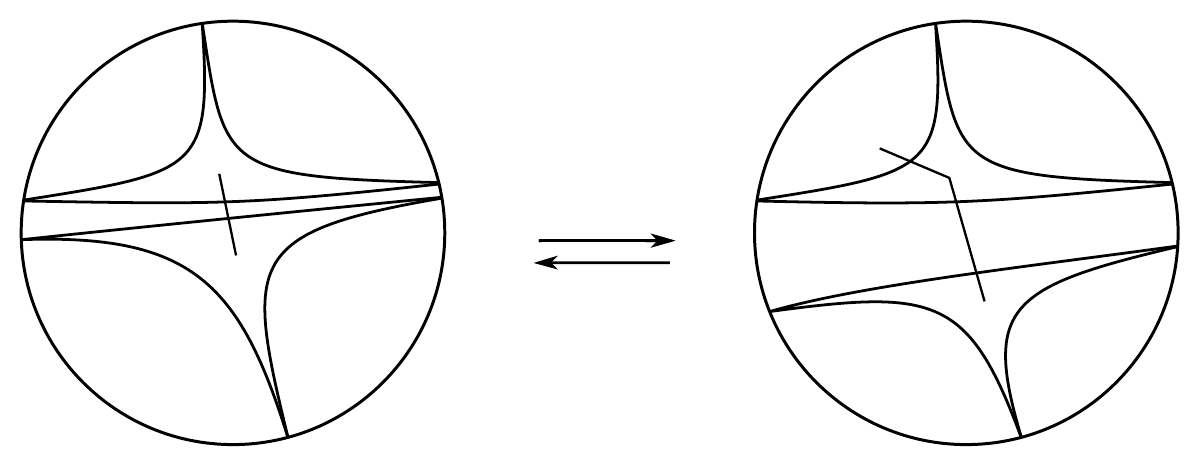}
\caption{ $d_Y(y, \Phi_{f}(\Phi_{f^{-1}}(y)))$ is bounded uniformally.} 
\label{4}
\end{figure}

Let $N=\max \{N_0, N_1\}$.
By Proposition \ref{key prop}, the distance $$d_Y(y_0, y_1)\le\eta_{N}(R).$$

Then we have 
$$d_Y(y, y_1)\le d_Y(y, y_0)+ d_Y(y_0, y_1)\le R+\eta_{N}(R).$$
Hence, for any $y\in Y$, we have $$d_Y(y, \Phi_{f}(\Phi_{f^{-1}}(y)))\le R+\eta_{N}(R).$$
Using the same argument we have that $d_X(x, \Phi_{f^{-1}}(\Phi_{f}(x)))$ is bounded by $R+\eta_{N}(R)$ for all $x\in X$.

Thus $\Phi_{f}$ is a quasi-isometry. Next we will prove that the quasi-isometry $\Phi_{f}$ induces $f$ on the Morse boundary.
Choose a basepoint $x_0$ in $X$. Let $p$ be a point in $\partial_*X$. Let $\gamma$ be an $N$-Morse geodesic from $x_0$ to $p$ such that $\gamma(0)=x_0$. Let $x_n=\gamma(n)$. 
Setting $y_n=\Phi_{f}(x_n)$. Choose $y_0$ as a basepoint in $Y$.
By the construction of $\Phi_{f}$, for each $n$, there exists $(a_n, b_n, c_n)\in \pi^{-1}_{X, N_0}(B(x_n, R))$ such that $\pi_Y(f(a_n), f(b_n), f(c_n))=y_n.$
It is not hard to see that $x_n\in E_{\delta_{N_0}+1+R}(a_n, b_n, c_n)$. 

Using the slimness property of Morse triangles and Lemma \ref{close M},
we can find a Morse gauge $N'$ depending only on $R, N_0$ and $N$ such that all the points $x_n$, and all geodesics between $a_n, b_n$ and $c_n$ are subset of $X^{(N')}_{x_0}$, and the points $a_n, b_n, c_n\in\partial X_{x_0}^{(N')}\subset \partial_*^{N'}X_{x_0}$. 

From the compactness of $\partial_*^{N'}X_{x_0}$ and completeness of $\partial X_{x_0}^{(N')}$, passing to a subsequence, all these three sequences $\{a_n\}, \{b_n\}$ and $\{c_n\}$ converges to points $p_1, p_2$ and $p_3$ in $\partial X_{x_0}^{(N')}$, respectively. 

{\bf Claim :} Two of $p_1, p_2, p_3$ will be the point $p$.
\begin{proof}[Proof of Claim]
Firstly suppose that these three points are distinct, we have the coarse center set $E(p_1, p_2, p_3)$. 
When $n$ is sufficiently large, the sequences of points $\{a_n\}, \{b_n\}$ and $\{c_n\}$ are sufficiently close to points $p_1, p_2, p_3$ respectively in the visual metric $(\partial X_{x_0}^{(N')}, d_{x_0, \epsilon_{N'}})$. 
From Lemma \ref{close tri}, we can see that the point $x_n$ lies within a uniformly bounded distance from $E(p_1, p_2, p_3)$ when $n$ is sufficiently large. 
But the distance $d_X(x_n, x_0)$ goes to infinity. We get a contradiction. Without loss of generality, we may assume that $p_1=p_2$. 

Since $x_n\in E_{\delta_{N_0}+1+R}(a_n, b_n, c_n)$, there exists a geodesic $\alpha_n$ between $a_n$ and $b_n$ for every $n$ and a point $x_n'\in \alpha_n\subset X_{x_0}^{(N')}$ such that $$d_X(x_n, x_n')\le \delta_{N_0}+1+R.$$ 
Denote by $\alpha_n'$ be the segment of the geodesic $\alpha_n$ from $x_n'$ to $a_n$. 
We have $$d_X(x_0, \alpha_n)\le d_X(x_0, \alpha_n').$$
Note that $(a_n\cdot_{N'} b_n)_{x_0}\to \infty$ as $n\to \infty$. From Lemma \ref{dist-GP}, $d_X(x_0, \alpha_n)$ tends to infinity as $n\to \infty$. 
This implies that $$d_X(x_0, \alpha_n')\to \infty\mbox{ as } n\to \infty.$$
By Lemma \ref{dist-GP} again, we have $$(a_n\cdot_{N'} x_n')_{x_0}\to \infty\mbox{ as } n\to \infty.$$
Applying Lemma \ref{GP} twice, we get
$$(p_1\cdot_{N'} x_n)_{x_0}\ge\min \{(p_1\cdot_{N'} a_n)_{x_0}, (a_n\cdot_{N'} x_n')_{x_0}, (x_n'\cdot_{N'} x_n)_{x_0}\}-C_{N'},$$
where $C_{N'}$ is a constant depending only on $N'$.
It is easy to see that $(x_n'\cdot_{N'}x_n)_{x_0}$ and $(p_1\cdot_{N'}a_n)_{x_0}$ tend to infinity as $n\to \infty$. This implies that $p_1=\lim x_n$. Note that $p=\lim x_n$, so $p=p_1$. This proves the claim.
\end{proof}
Thus by passing to a subsequence, two sequences $\{a_n\}$ and $\{b_n\}$ converge to point the $p$ in the topology of $\partial X_{x_0}^{(N')}$. It follows they also converge to the point $p$ in the topology of $\partial_*X_{x_0}$.

Now consider two sequences $\{f(a_n)\}$ and $\{f(b_n)\}$, they converge to $f(p)$ in the topology of $\partial_*Y_{y_0}$ since $f$ is a homeomorphism between $\partial_*X_{x_0}$ and $\partial_*Y_{y_0}$. 
From Lemma \ref{conv}, we can find a Morse gauge $N_2$ such that all points $f(a_n), f(b_n), f(p)\in \partial Y_{y_0}^{(N_2)}$ for every $n$, and these two sequences $\{f(a_n)\}$ and $\{f(b_n)\}$ converge to $f(p)$ in the topology of $\partial Y_{y_0}^{(N_2)}$.  
Also for $N_2$ sufficiently large, an analogue of the argument used to prove the Claim shows that $f(p)=\lim y_n=\lim \Phi_f(x_n)$ in the topology of $\partial Y_{y_0}^{(N_2)}$. 
From Theorem \ref{s and *}, it is not hard to get that the Hausdorff distance between $\Phi_f(\gamma)$ and $\beta$ is finite, where $\beta$ is some geodesic from $y_0$ to $f(p)$.
Thus we conclude that $\partial_*\Phi_f(p)=f(p)$ for any $p\in \partial_*X$. This means that the quasi-isometry $\Phi_f: X\to Y$ induces the homeomorphism $f:\partial_*X\to \partial_*Y$.

\end{proof}

\subsection{Proof of Proposition \ref{key prop}}
\begin{proof}

Let $N$ be a Morse gauge and $(a_1, b_1, c_1), (a_2, b_2, c_2)\in \partial_*X^{(3, N)}$. 
As in Figure \ref{keyprop}, set 
$x_i=\pi_X((a_i, b_i, c_i))$ and $y_i=\pi_Y((f(a_i), f(b_i), f(c_i)))$, where $i=1, 2$.

If $d_X(x_1, x_2)\le \theta$, there exists $N_1=N_1(N, \theta)$
such that for any $i, j\in\{1,2\}$
$$a_i, b_i, c_i\in\partial X_{x_j}^{(N_1)}.$$
Since $f$ is basetriangle stable, there exists $N_2=N_2(N, f)$ so that for any $j=1, 2$, we have
$$f: \partial X_{x_{j}}^{(N_1)} \to \partial Y_{y_{j}}^{(N_2)}.$$
There are four metric spaces:
$$(\partial X_{x_{j}}^{(N_1)}, d_{x_{j}, \epsilon_{N_1}}),
(\partial Y_{y_{j}}^{(N_2)}, d_{y_{j}, \epsilon_{N_2}}), \mbox{ where } j=1, 2.$$ In the following argument, we will use $d_{x_j}$ and $d_{y_j}$ to simplify notations $d_{x_{j}, \epsilon_{N_1}}$ and $d_{y_{j}, \epsilon_{N_2}}$, where $j\in \{1, 2\}$.

\begin{figure}[!ht]
\labellist

\pinlabel $a_1$ at 74 130
\pinlabel $a_2$ at 117 110
\pinlabel $b_1$ at 0 63
\pinlabel $b_2$ at 12 108
\pinlabel $c_1$ at 72 0
\pinlabel $c_2$ at 133 43
\pinlabel $f(a_1)$ at 318 130
\pinlabel $f(a_2)$ at 298 130
\pinlabel $f(b_1)$ at 245 23
\pinlabel $f(b_2)$ at 241 97
\pinlabel $f(c_1)$ at 359 111
\pinlabel $f(c_2)$ at 379 61
\pinlabel $X$ at 33 33
\pinlabel $Y$ at 340 33
\pinlabel $x_1$ at 73 83
\pinlabel $y_1$ at 316 67
\pinlabel $x_2$ at 60 72
\pinlabel $y_2$ at 297 91
\pinlabel $\partial_*X$ at 15 15
\pinlabel $\partial_*Y$ at 355 15
\pinlabel $f$ at 185 75

\endlabellist
\includegraphics[width=6.5in]{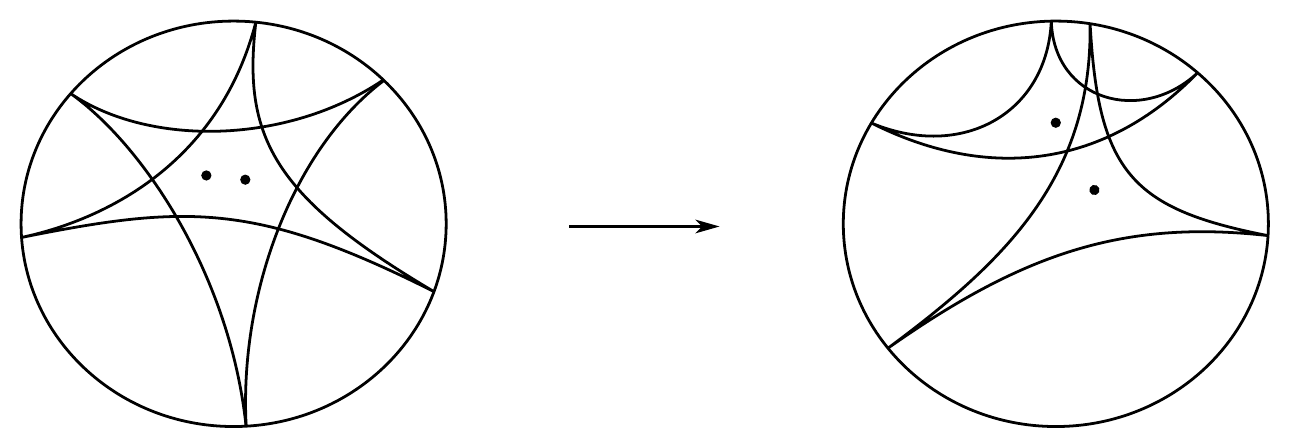}
\caption{The homeomorphism $f$ between the Morse boundaries $\partial_*X$ and $\partial_*Y$ induces a map $f: \partial X_{x_{j}}^{(N_1)} \to \partial Y_{y_{j}}^{(N_2)},$ where $j=1, 2$. } 
\label{keyprop}
\end{figure}

Since $x_{1}, x_{2}$ are coarse centers and distance between $x_1$ and $x_2$ is bounded by $\theta$, we have that both points $x_{1}$ and $x_{2}$ lie uniformly bounded distance $\theta+1+\delta_{N}$ from some geodesics $(a_i, b_i)$, $(a_i, c_i)$ and $(b_i, c_i)$ where $i= 1, 2$. 
Hence, by Lemma \ref{dist-GP}, there exists a positive constant $u=u(\epsilon_{N_1}, \delta_N, \theta)$ such that for any $i, j\in\{1, 2\}$,
\begin{equation}\label{lower bd1}
d_{x_j}(a_i, b_i), d_{x_j}(a_i, c_i),d_{x_j}(b_i, c_i)\ge u.
\end{equation}

For coarse center $y_{i}$, it lies uniformly bounded distance $1+\delta_{N}$ from some geodesics $(f(a_i), f(b_i))$, $(f(a_i), f(c_i))$ and $(f(b_i), f(c_i))$ where $i= 1, 2$. Again by Lemma \ref{dist-GP}, there exists positive constant $u'=u'(\epsilon_{N_2}, \delta_{N_2})$ such that for $j=1,2$, we have
\begin{equation}\label{lower bd2}
d_{y_j}(f(a_j), f(b_j)), d_{y_j}(f(a_j), f(c_j)),d_{y_j}(f(b_j), f(c_j))\ge u'.
\end{equation}
We would like to show a similar equation by switching the basepoints under certain conditions which is the following claim.

{\bf Claim:}
If $f$ is a bih\"older or quasisymmetric or strongly quasi-conformal homeomorphism, there exists a lower bound $u_0$ of $$d_{y_1}(f(a_2), f(b_2)), d_{y_1}(f(a_2), f(c_2)),d_{y_1}(f(b_2), f(c_2)),$$ where $u_0$ is a positive constant and depends only on $N, f$ and $\theta$.
\\

We will prove the claim later. Now with this claim, we are ready to find an upper bound for $d_Y(y_{1}, y_{2})$. 
Since $$d_{y_1}(f(a_2), f(b_2)), d_{y_1}(f(a_2), f(c_2)),d_{y_1}(f(b_2), f(c_2))\ge u_0>0,$$  
there exits a constant $\theta '=\theta '(u_0, N_2, \epsilon_{N_2})$ by Lemma \ref{dist-GP} so that
$$y_{1}\in E_{\theta '}((f(a_2), f(b_2), f(c_2))).$$
Let $\theta_1=\max\{\theta ', \delta_{N_2}\}$. We know that 
$$y_{1}, y_{2}\in E_{\theta_1}((f(a_2), f(b_2), f(c_2))). $$
Lemma \ref{coarse} says that the set $E_{\theta_1}((f(a_2), f(b_2), f(c_2)))$ has bounded diameter $L=L(\theta_1, N_2)$.
Finally,  setting $\eta_N(\theta)=L$, we get a constant, which depends only on $N, f$ and $\theta$, such that
$$d_Y(y_{1}, y_{2})\le \eta_N(\theta).$$
Now it remains to prove the claim.
\begin{proof}[Proof of Claim]
There are three cases.

{\bf Case one: When $f$ is a bih\"older homeomorphism.}

Suppose that $f$ is a bih\"older homeomorphism between $\partial_*X$ and $\partial_*Y$.
From the discussion above, we have a bih\"older map $$f: (\partial X_{x_{1}}^{(N_1)}, d_{x_1}) \to (\partial Y_{y_{1}}^{(N_2)}, d_{y_1})$$ and there exist positive constants $C\ge 1, \alpha_1$, and $\alpha_2$ such that 
$$\frac{1}{C}d_{x_1}(a, b)^{\frac{1}{\alpha_1}}\le d_{y_1}(f(a),f(b))^{\alpha_2},$$ for any $a, b\in \partial X_{x_{1}}^{(N_1)}$. Note that $C, \alpha_1$ and $\alpha_2$ depend only on $f, N_1$ and $N_2$.
From Equation \eqref{lower bd1}, we have 
$$d_{y_1}(f(a_2), f(b_2)), d_{y_1}(f(a_2), f(c_2)),d_{y_1}(f(b_2), f(c_2))\ge u_0,$$ where $u_0=(\frac{1}{C}u^{\frac{1}{\alpha_1}})^{\frac{1}{\alpha_2}}$ is a positive constant depending only on $f, N, \theta$.

{\bf Case two: When $f$ is a quasisymmetric homeomorphism.}

Now let $f$ be a quasisymmetric homeomorphism between $\partial_*X$ and $\partial_*Y$.

We have a quasisymmetric homeomorphism $$f: (\partial X_{x_{1}}^{(N_1)}, d_{x_1}) \to (\partial Y_{y_{1}}^{(N_2)}, d_{y_1})$$
and an increasing homeomorphism $\psi: (0, \infty)\to (0, \infty)$ such that for any three distinct points $a, b, c\in X_{x_{1}}^{(N_1)}$ we have 
$$\frac{d_{y_1}(f(a), f(b))}{d_{y_1}(f(a), f(c))}\le \psi\left(\frac{d_{x_1}(a, b)}{d_{x_1}(a, c)}\right).$$
The map $\psi$ depends only on $f$, $N_1$ and $N_2$.

There are three cases.
\begin{itemize}
\item If the set $\{a_1, b_1, c_1\}=\{a_2, b_2, c_2\}$, then the claim follows from Equation \ref{lower bd2}.

\item  If the set $\{a_1, b_1, c_1\}\cap\{a_2, b_2, c_2\}$ contains two elements. Without loss of generality, we may assume that $a_1=a_2, b_1=b_2, c_1\neq c_2$.

Note that $a_2, c_2, b_1$ are three distinct points. From Equation \eqref{lower bd1}, we have
$$\frac{d_{x_1}(c_2, b_1)}{d_{x_1}(c_2, a_2)}\le \frac{1}{u}.$$
Thus 
$$\frac{d_{y_1}(f(c_2), f(b_1))}{d_{y_1}(f(c_2), f(a_2))}\le \psi\left(\frac{d_{x_1}(c_2, b_1)}{d_{x_1}(c_2, a_2)}\right)\le\psi(\frac{1}{u}).$$

A similar argument for $a_2, c_2, c_1$, gives 
$$\frac{d_{y_1}(f(c_2), f(c_1))}{d_{y_1}(f(c_2), f(a_2))}\le \psi\left(\frac{d_{x_1}(c_2, c_1)}{d_{x_1}(c_2, a_2)}\right)\le\psi(\frac{1}{u}).$$

It follows that $$2d_{y_1}(f(c_2), f(a_2))\ge \frac{1}{\psi(\frac{1}{u})}(d_{y_1}(f(c_2), f(b_1))+d_{y_1}(f(c_2), f(c_1))).$$
With the triangle inequality and Equation \eqref{lower bd2}, we have $$d_{y_1}(f(c_2), f(b_1))+d_{y_1}(f(c_2), f(c_1))\ge d_{y_1}(f(b_1), f(c_1))\ge u'.$$ Finally we deduce that 
\begin{equation}\label{lower}
d_{y_1}(f(c_2), f(a_2))\ge \frac{u'}{2\psi(\frac{1}{u})}.
\end{equation}

Now consider $b_2, c_2, a_1$ and $b_2, c_2, c_1$, applying a similar proof, we get $$d_{y_1}(f(c_2), f(b_2))\ge \frac{u'}{2\psi(\frac{1}{u})}.$$
Since $a_1=a_2, b_1=b_2$ and from Equation \eqref{lower bd2} we get $$d_{y_1}(f(a_2), f(b_2))=d_{y_1}(f(a_1), f(b_1))\ge u'$$
Let $u_0=\min\{u', \frac{u'}{2\psi(\frac{1}{u})}\}$. We show that $$d_{y_1}(f(a_2), f(b_2)), d_{y_1}(f(a_2), f(c_2)),d_{y_1}(f(b_2), f(c_2))\ge u_0.$$

\item If the set $\{a_1, b_1, c_1\}\cap\{a_2, b_2, c_2\}$ contains at most one element.

By an analogous argument used to prove Equation \ref{lower}, we can show that 
$$d_{y_1}(f(a_2), f(b_2)), d_{y_1}(f(a_2), f(c_2)),d_{y_1}(f(b_2), f(c_2))\ge \frac{u'}{2\psi(\frac{1}{u})}.$$

\end{itemize}
Thus we conclude that $$d_{21}(f(a_2), f(b_2)), d_{21}(f(a_2), f(c_2)),d_{21}(f(b_2), f(c_2))\ge u_0,$$ where $u_0=\min\{u', \frac{u'}{2\psi(\frac{1}{u})}\}$ is a positive constant depending only on $N, f$ and $\theta$.

{\bf Case three: When $f$ is a strongly quasi-conformal  homeomorphism.}

Assume that $f$ is a strongly quasi-conformal  homeomorphism between $\partial_*X$ and $\partial_*Y$.

That is, there exists a strongly quasi-conformal map $$f: (\partial X_{x_{1}}^{(N_1)}, d_{x_1}) \to (\partial Y_{y_{1}}^{(N_2)}, d_{y_1})$$
and a function $\phi : [ 1, \infty )\to [1, \infty )$ such that $f$ maps every $r$-annulus $A(x, r)$ of $\partial X_{x_{1}}^{(N_1)}$ into some $\phi(r)$-annulus $A(f(x),\phi(r))$ of $\partial X_{y_{1}}^{(N_2)}$. The map $\phi$ depends only on $f$, $N_1$ and $N_2$. Recall that in a metric space $(X, d)$, an $r$-{\it annulus} $A$ is defined by $$A(x_0, r) =A(x_0, a, ar)=\{x\in X\ |\  a\le d(x, x_0)\le ar\}.$$ 
Now we will show that 
$$d_{y_1}(f(a_2), f(c_2)),d_{y_1}(f(b_2), f(c_2)), d_{y_1}(f(a_2), f(b_2))\ge u_0$$ for some positive constant $u_0$.

\begin{figure}[!ht]
\labellist

\pinlabel $f$ at 145 70
\pinlabel $c_2$ at 55 57
\pinlabel $a_2$ at 40 95
\pinlabel $b_2$ at 100 78
\pinlabel $a_1$ at 50 15
\pinlabel $b_1$ at 90 32
\pinlabel $A(c_2,\frac{u}{3},1)$ at 60 0

{\color {blue}
\pinlabel $1$ at 73 80
\pinlabel $\phi(\frac{3}{u})v$ at 247 85
}
{\color {red}
\pinlabel $\frac{u}{3}$ at 68 55
\pinlabel $v$ at 235 66
}

\pinlabel $f(c_2)$ at 222 54
\pinlabel $f(a_2)$ at 219 95
\pinlabel $f(b_2)$ at 261 68
\pinlabel $f(a_1)$ at 223 25
\pinlabel $f(b_1)$ at 248 37
\pinlabel $A(f(c_2),v,\phi(\frac{3}{u})v)$ at 224 0

\endlabellist
\includegraphics[width=6.5in]{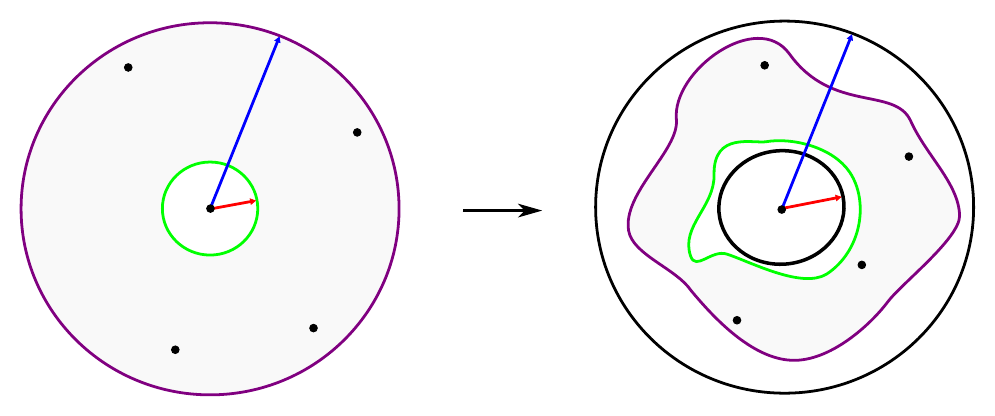}
\caption{Point $c_2$ is the center of a $\frac{3}{u}$-annulus and $f(A(c_2, \frac{u}{3}, 1))\subset A(f(c_2), v, \phi(\frac{3}{u})v)$.} 
\label{conformal}
\end{figure}

We regard the point $c_2$ as the center.
Let $u$ be the constant in Equation \eqref{lower bd1}.
Consider the $\frac{3}{u}$-annulus $$A(c_2, \frac{3}{u})=A(c_2, \frac{u}{3}, 1) \subset \partial X_{x_{1}}^{(N_1)}.$$
There exists some $\phi(\frac{3}{u})$-annulus $A(f(c_2), v, \phi(\frac{3}{u})v)\subset \partial X_{y_{1}}^{(N_2)}$ such that $$f(A(c_2, \frac{u}{3}, 1))\subset A(f(c_2), v, \phi(\frac{3}{u})v)$$ for some $v$.

Note that $a_2, b_2\in A(c_2, \frac{u}{3}, 1).$ It follows that 
$$f(a_2), f(b_2)\in A(f(c_2), v, \phi(\frac{3}{u})v).$$
Hence $$d_{y_1}(f(a_2), f(c_2)), d_{y_1}(f(b_2), f(c_2))\ge v.$$ It suffice to find a lower bound for $v$.

From Equation \eqref{lower bd1}, there are at most one point of $\{a_1, b_1, c_1\}$ in the ball $B(c_2, \frac{u}{3})$.
See Figure \ref{conformal}. Note that the diameter of $\partial_*X_{x_{1}}^{(N_1)}$ is bounded by $1$. Without loss of generality,  let us say $a_1, b_1\in A(c_2, \frac{u}{3}, 1)$.
Since $f(A(c_2, \frac{u}{3}, 1))\subset A(f(c_2), v, \phi(\frac{3}{u})v)$, we have $$d_{y_1}(f(a_1), f(c_2)), d_{y_1}(f(b_1), f(c_2))\le \phi(\frac{3}{u})v.$$

By the triangle inequality, we get 
$$d_{y_1}(f(a_1), f(b_1))\le d_{y_1}(f(a_1), f(c_2))+d_{y_1}(f(b_1), f(c_2))\le 2\phi(\frac{3}{u})r.$$
From Equation \eqref{lower bd2}, it is easy to see that
$$u'\le 2\phi(\frac{3}{u})v,$$ that is $$v\ge \frac{u'}{2\phi(\frac{3}{u})}.$$

Let $u_0=\frac{u'}{2\phi(\frac{3}{u})}$, we have shown that 
$$d_{y_1}(f(a_2), f(c_2)), d_{y_1}(f(b_2), f(c_2))\ge u_0.$$ 
If we regard the point $a_2$ as the center and use the similar argument as above, we will get that $$d_{y_1}(f(a_2), f(b_2))\ge u_0.$$

This proves the claim.
\end{proof}

\end{proof}

From the proofs in section 4, we also get the following theorem.  
\begin{thm}
Let $X$ and $Y$ be proper, cocompact geodesic metric spaces. 
Suppose that $\partial_*X$ contains at least three points.
Let $f: \partial_*X \to \partial_*Y$ be a homeomorphism. There exist Morse gauges $N_0$ and $N$ such that the following are equivalent.

\begin{enumerate}
\item $f$ is induced by a quasi-isometry $h: X \to Y$.
\item $f$ and $f^{-1}$ are $(N_0, N)$-bihölder.
\item $f$ and $f^{-1}$ are $(N_0, N)$-quasisymmetric.
\item $f$ and $f^{-1}$ are $(N_0, N)$-strongly quasi-conformal. 

\end{enumerate} 
\end{thm}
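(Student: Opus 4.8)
The plan is to sharpen the two implications already established, observing that each direction invokes the metric hypotheses on $f$ only at finitely many, explicitly determined Morse gauges, so that a single pair $(N_0, N)$ suffices.

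The forward direction---that (1) yields each of (2), (3), (4) at whatever pair $(N_0, N)$ we settle on---requires nothing new. The theorem proved at the close of Section~3 shows that a quasi-isometry induces a map which is bih\"older, quasisymmetric and strongly quasi-conformal, and its proof in fact delivers the $(N_0', N')$-version of each property for \emph{every} pair $(N_0', N')$; in particular it holds for the pair fixed below, and the same applies to $f^{-1}$.

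For the converse implications I would retrace the proof of Theorem~\ref{thm 2} and record exactly where the metric hypothesis on $f$ (and on $f^{-1}$) is consumed. Fix an initial gauge $M_0$ with $\partial_*X^{(3,M_0)}$ and $\partial_*Y^{(3,M_0)}$ nonempty, let $M_1$ be the associated $2$-stability gauge furnished by Proposition~\ref{2=bstri}, and set $N_0 = \max\{M_0, M_1\}$; the projection construction then yields the constant $R$. Inspecting the proof, the metric property of $f$ enters \emph{only} inside the Claim within the proof of Proposition~\ref{key prop}, and Proposition~\ref{key prop} is applied exactly twice in the proof of Theorem~\ref{thm 2}: once via Proposition~\ref{control} with $\theta = 1+2R$ (the case of consecutive points at distance $\le 1$ on a geodesic), and once in the quasi-inverse step with $\theta = R$. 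In both invocations the relevant basetriangles lie in $\partial_*X^{(3,N_0)}$, and the Claim applies the bih\"older / quasisymmetric / strongly quasi-conformal property to the single induced map $f\colon \partial X_{x_1}^{(N_1)}\to \partial Y_{y_1}^{(N_2)}$ at level $N_1 = N_1(N_0, \theta)$. Since this gauge is monotone in $\theta$, I would take $N = N_1(N_0, 1+2R)$, a single level dominating both invocations.

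It remains to see that the $(N_0, N)$-property at this one level $N$ supplies the property at each smaller level $N_1 \le N$ that the Claim actually uses. This is the technical heart. For $N_1 \le N$ one has $\partial X_{x_0}^{(N_1)} \subseteq \partial X_{x_0}^{(N)}$, and the comparison $(p\cdot_{N_1} q)_{x_0}\le (p\cdot_{N} q)_{x_0}\le (p\cdot_{N_1} q)_{x_0}+64N(3,0)$, together with Theorem~\ref{metric} and Remark~\ref{B-eq}, shows that on the smaller boundary the identity $(\partial X_{x_0}^{(N_1)}, d_{x_0,\epsilon_{N_1}}) \to (\partial X_{x_0}^{(N_1)}, d_{x_0,\epsilon_{N}})$ satisfies $d_{x_0,\epsilon_N}(x,y) \asymp d_{x_0,\epsilon_{N_1}}(x,y)^{\epsilon_N/\epsilon_{N_1}}$, i.e.\ it is a snowflake-type equivalence. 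Such a map is simultaneously bi-H\"older, quasisymmetric, and carries each $r$-annulus to an $r^{s}$-annulus with the same center; composing $f_N$ with it (and with the analogous map on the target side) therefore transfers all three properties from level $N$ down to level $N_1$ with controlled constants. With this downward compatibility, the $(N_0,N)$-hypothesis delivers precisely the input the Claim needs, Proposition~\ref{key prop} goes through for $\theta \in \{R, 1+2R\}$, and the rest of the proof of Theorem~\ref{thm 2} produces the quasi-isometry inducing $f$ verbatim. The main obstacle is exactly this last transfer step in the strongly quasi-conformal case, where one must check that comparing visual metrics across levels distorts annuli in a controlled, center-preserving way; the bih\"older and quasisymmetric cases are routine once the snowflake comparison is recorded. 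Finally, enlarging $N_0$ and $N$ to serve $f$ and $f^{-1}$ simultaneously, by passing to maxima, closes the equivalence.
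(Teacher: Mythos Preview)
Your proposal is correct and follows the paper's own approach, which is literally the single sentence ``From the proofs in section 4, we also get the following theorem.'' You have carried out in detail what the paper only gestures at: isolating the two invocations of Proposition~\ref{key prop} (at $\theta=1+2R$ and $\theta=R$), noting that the metric hypothesis is consumed only inside the Claim at a specific level $N_1(N_0,\theta)$, and handling the level mismatch via the snowflake comparison of visual metrics---an argument the paper leaves entirely implicit but which is exactly what is needed (alternatively one could rerun the Claim directly at the larger level $N$, since the lower bounds in (\ref{lower bd1}) and (\ref{lower bd2}) only shift by the bounded Gromov-product discrepancy $64N(3,0)$).
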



As noted in Remark \ref{bihölder}, our definition of bihölder is different from usual definition. Thus, it is interesting to have the following corollary in the case of hyperbolic spaces. One direction is not new for us. But the other direction seems to be new.
\begin{cor}
Let $X$ and $Y$ be proper, cocompact geodesic, hyperbolic spaces. 
Suppose that $\partial X$ contains at least three points.
Let $f: \partial X \to \partial Y$ be a homeomorphism. Then the following are equivalent.

\begin{enumerate}
\item $f$ is induced by a quasi-isometry $h: X \to Y$.
\item $f$ and $f^{-1}$ are bihölder.
\item $f$ and $f^{-1}$ are quasisymmetric.
\item $f$ and $f^{-1}$ are strongly quasi-conformal. 

\end{enumerate} 
\end{cor}


\end{document}